\documentclass[11pt,a4paper,reqno]{amsart}
 
\usepackage[margin=1in]{geometry}

\usepackage[english]{babel}
\usepackage[utf8]{inputenc}

\usepackage{bbm,amsmath,amsthm,epsfig,latexsym,marvosym,mathrsfs}
\usepackage{amsfonts}
\usepackage{bigints}
\usepackage{amssymb}
\usepackage{color}
\usepackage{graphics}
\usepackage[position=b]{subcaption}


\usepackage{mathtools}
\usepackage{microtype}
\usepackage{newtxtext}
\usepackage{hyperref}

\graphicspath{{./}{figures/}}

\setcounter{tocdepth}{2}



\def\R{\mathbb{R}}
\def\N{\mathbb{N}}
\def\F{{F}}
\def\H{\mathcal{H}}
\def\L{\mathcal{L}}

\def\d{\mathrm{d}}

\def\ca{\mathbbmss{1}}

\def\e{\varepsilon}

\DeclareMathOperator*{\esssup}{ess\,\,sup}
\DeclareMathOperator*{\Glim}{\Gamma-\lim}
\DeclareMathOperator*{\Var}{Var}
\DeclareMathOperator*{\Lip}{Lip}
\DeclareMathOperator*{\grad}{grad}

\newcommand{\zz}{z}
\newcommand{\tmu}{\tilde \mu}
\newcommand{\M}{\mathcal{M}}

\newcommand{\te}{\textrm}

\definecolor{dred}{rgb}{0.75,0,0}
\definecolor{dgreen}{rgb}{0.01,0.5,0.10}
\definecolor{darkblue}{rgb}{0.1,0.1,0.6}

\definecolor{mygreen}{rgb}{0.1,0.75,0.2}

\newcommand{\nc}{\normalcolor}

\newlength\figureheight
\newlength\figurewidth


\newtheorem{theorem}{Theorem}[section]
\newtheorem{corollary}[theorem]{Corollary}
\newtheorem{proposition}[theorem]{Proposition}
\newtheorem{lemma}[theorem]{Lemma}

\theoremstyle{definition}
\newtheorem{remark}[theorem]{Remark}

\numberwithin{equation}{section}
\numberwithin{figure}{section}

\pagestyle{plain}

\author{Marco Caroccia}
\address{Scuola Normale Superiore, Piazza dei Cavalieri, 7, 56126 Pisa PI, Italy  and Università degli studi di Firenze,  Dipartimento di Matematica e Informatica "Ulisse Dini", Viale Giovanni Battista Morgagni, 67/A, 50134 Firenze FI, Italy.}
\email{marco.caroccia@sns.it;
marco.caroccia@unifi.it}

\author{Antonin Chambolle}
\address{CMAP, CNRS and \'Ecole Polytechnique 91128 Palaiseau, France}
\email{chambolle.antonin@polytechnique.fr}

\author{Dejan Slep\v{c}ev}
\address{Department of Mathematical Sciences, Carnegie Mellon University, Pittsburgh, PA 15213, USA}
\email{slepcev@math.cmu.edu}

\makeindex
\title{Mumford -- Shah functionals on graphs and their asymptotics}

\begin{document}

\begin{abstract}
We consider adaptations of the Mumford-Shah functional to graphs. 
These are based on discretizations of nonlocal approximations to the Mumford-Shah functional. 
Motivated by applications in machine learning we study the random geometric graphs associated to random samples of a measure. We establish the conditions on the graph constructions under which the minimizers of graph Mumford-Shah functionals converge to a minimizer of a continuum Mumford-Shah functional. Furthermore we explicitly identify the limiting functional. Moreover we describe an efficient algorithm for computing the approximate minimizers of the graph Mumford-Shah functional. 
\end{abstract}

\maketitle
\tableofcontents

\smallskip
\noindent \textbf{Keywords:} 
  nonlocal variational problems, variational problems with randomness, asymptotic consistency,  Gamma-convergence, discrete to continuum limit, regression

\smallskip
\noindent \textbf{Mathematics Subject Classification numbers:}  49J55, 62G20,  65N12

\section{Introduction}
Our investigation of graph based Mumford-Shah functionals is motivated by problems arising in machine learning. Given a point cloud in euclidean space with (noisy) real-valued  labels, or a graph with labeled vertices, we investigate a model
to denoise the labels while allowing for jumps (discontinuities) in label values. As with the classical Mumford-Shah functional this allows one to   identify  the locations of sharp transitions of label values. 
Our primary focus is on graphs arising as neighborhood graphs of point clouds in a euclidean space, in dimension two or higher, where we can carry out rigorous analysis.  However some of the functionals we study can be formulated purely in the setting of weighted graphs and may be useful in applications. 

The model we study is based on ideas from image processing and go back to the celebrated  Mumford and Shah \cite{mumford1989optimal}  variational model for image segmentation.  To adapt the Mumford--Shah functional to point clouds and graphs we rely on the work of Gobbino  \cite{gobbino1998finite} and Gobbino and Mora \cite{gobbino2001finite}  who introduced a family of nonlocal models which approximate the Mumford--Shah functional. Ruf  \cite{Ruf} has recently adapted these nonlocal models to random discrete setting and studied them in the setting of stochastic lattices. Here we study such functionals in the setting of random geometric, and related, graphs relevant to machine learning.

 \medskip
 
 \emph{General graph setting.} Consider an undirected weighted graph with vertices  $V= \{1, \dots. n\}$ and edge  weights matrix $W = [w_{ij}]_{i,j=1, \dots,n}$. Edge weights are considered to be nonnegative and symmetric. 
 Let $f:V \to \R$ be the observed noisy labels. Let 
 $\zeta:[0,\infty) \rightarrow [0,\infty)$ be concave and such that $\zeta(0)=0$, $0< \zeta'(0)<\infty$.
For $p>1$ we define the \emph{Graph Mumford--Shah} functional acting on $u:V \to \R$ as 
\begin{equation}\label{eq:GMSf}
	\mathcal{GMS}_f(u):=  \frac{\lambda}{n} \sum_{i=1}^n |u_i - f_i|^2 + \frac{1}{\e n^2} \sum_{i,j=1}^n \zeta\left( \frac{1}{\e} |u_i - u_j|^2 \right) w_{ij}
\end{equation}

We note that when the differences $u_i-u_j$ are relatively small the functional is similar to the graph dirichlet energy, while for large values of  $u_i-u_j$  the functional saturates and in some ways considers $u$ to be discontinuous over the edge. It then just penalizes the size of the set of discontinuities.  
Minimizing the functional  allows one to find the sharp transitions in the data by detecting edges  where $u_i-u_j$  is large compared to $\e$. That is the parameter $\e>0$ sets the scale for what differences of the values are considered ``large''.
We note that the functional is nonconvex. 
\medskip
 
 \emph{Geometric graph setting.} We now consider the setting of point clouds and the random geometric graphs generated by them. The ability to measure the distance between vertices allow us to create a larger family of graph Mumford--Shah functionals. 
 Let $V_n = \{ x_1, \dots, x_n\}$ be a set of points in $\R^d$. The points $x_i$ are typically random samples of a measure describing the data distribution, but this interpretation is not essential in defining the functional. Given these points we define a graph by setting the edge weights to be $w_{i,i} = 0$ and for $i \neq j$
 \begin{equation} \label{eq:edgew}
w_{ij} = \eta_{\e}(|x_i - x_j|)
\end{equation}
 where $\eta$ is a nonnegative, nonincreasing function which decays to $0$ faster than a specified algebraic rate. 
  Let $f:V_n \to \R$ be the observed noisy labels and let $\zeta$ be as in the graph setting above. 
 For $p \in [1,d)$ 
 and $q \in [0,p-1]$ we define the \emph{Graph Mumford--Shah} functional acting on $u:V_n \to \R$ as 
\begin{equation}\label{eq:GMSfen}
	\mathcal{GMS}_{f,\e,n}(u):= \frac{\lambda}{n} \sum_{i=1}^n |u(x_i) - f_i|^2 + \frac{1}{\e} \frac{1}{n^2}\sum_{i,j=1}^n \zeta\left( \e^{1-p+q} \frac{|u(x_i) - u(x_j)|^p}{|x_i-x_j|^q}\right) \eta_{\e}(|x_i - x_j|).
	\end{equation}
We note that taking $q=0$ reduces this functional to one considered in the pure graph setting. 
\medskip

We rigorously study of the asymptotics of $\mathcal{GMS}_{\e,n}(u)$ as $n \to \infty$ and $\e \to 0$ and establish in Theorem \ref{thm main: G conv} that its minimizers converge to 
to minimizers of a  Mumford--Shah functional posed in continuum euclidean domain.  
We note that related results for a stochastic lattice model have been obtained recently by Ruf \cite{Ruf}, see Remark 
\ref{rem:ruf}. 
The conditions of the Theorem \ref{thm main: G conv}  are optimal in terms of scaling of $\e_n$ on $n$ for which the convergence holds for all dimensions $d \geq 2$. To show the result we follow the general strategy of \cite{GTS16} and use a number of results of calculus of variations, in particular the works of Gobbino and Mora \cite{gobbino1998finite,gobbino2001finite}. 
There are two notable advances:
\begin{itemize}
\item[I.] We introduce a strategy to overcome the issues that arise from the lack of control of the denominator in \eqref{eq:GMSfen}. Namely the discrepancy in the quotients inside of $\zeta$ may seem large if the standard tools to compare the discrete and continuum functionals using a transport map are used directly. In Remark \ref{rem:drill} we outline the steps we take to account for that.
\item[II.] Unlike in \cite{GTS16}, our results have optimal scaling in 2D. Using Lemma \ref{lem: improved convergence} and Lemma \ref{lem:improved convergence TLp} we develop an approach to $\Gamma$-convergence that uses a more relaxed way to compare the discrete and continuum measures. In particular the approach outlined at the beginning of Subsection \ref{sec:aux} would allow one to obtain optimal estimates for total variation, Laplacian, andf $p$-Laplacian functionals considered in \cite{GTS16}, \cite{GTSspectral} and \cite{SleTho17plap} respectively. We note that for the graph total variation optimal estimates in 2D were recently obtained by M\"uller and Penrose \cite{muller2018optimal}. The approach here is simpler, but does use the insight of M\"uller and Penrose that binning at an intermediate scale can be advantageous. 
\end{itemize}

\emph{Organization.} In Subsection \ref{sec:related} we review the works on related problems, primarily on the mathematical aspects of related data science questions. In Section \ref{sec:setting} we introduce the graph based and the continuum functionals and state the main results. 
in Section \ref{sec:prelim} we recall the mathematical notion of $\Gamma$-convergence and its main properties and we recall  the $TL^p$ space and its main properties. We introduce the relaxed way to compare measures with the empirical measures of their samples.  In Section \ref{sec:gamma} we prove the main results on $\Gamma$-convergence, while in Section \ref{sec: compactness} we prove the accompanying compactness result. In Section \ref{sec:numerics} we describe an algorithm for computing the approximate (local) minimizers of the graph Mumford--Shah functional and perform numerical experiments on synthetic data to showcase its properties and on real-estate sales data to highlight its applicability in prediction problems. In Appendices \ref{sec:app} and \ref{sec:app2} we prove two technical results needed in Section \ref{sec:gamma}.

\subsection{Related works} \label{sec:related}
Here we review the related models in data analysis. The background about the Mumford-Shah functional and  has been provided in the introduction and the mathematical works which serve as the basis for our  proofs are recalled as we present our approach  in the introduction and Sections \ref{sec:prelim} and \ref{sec:gamma}.

Regularizing and denoising functions given on graphs has been studied in variety of contexts in machine learning. 
Here we focus on regularizations which still allow for the jumps in the regularized function. There are two lines of research which have led to such functionals. One, as is the case with our approach, is in using inspiration from image processing where variational approaches have been widely used for image denoising and segmentation. Particularly relevant in the context of imaging are the works of Chan and Vese  \cite{chan2001, vese2002}, who proposed a piecewise constant simplification of the Mumford-Shah functional and have shown its effectiveness in image segmentation, and Rudin, Osher, and Fatemi \cite{ROF} who proposed a TV (total variation) based regularization for the image denoising. 
 In analogy with Chan and Vese,  \cite{hu2013} Hu, Sunu, and Bertozzi formulated the piecewise-constant Mumford functional on graphs. They also  developed an efficient numerical approach to compute the minimizers and used it to study a (multi-class) classification problem. 
A ROF functional on graphs, with $L^1$ fidelity term, was studied by Garc\'ia Trillos and Murray \cite{GTM17}. 

TV based regularizations have also been developed in statistics community. Mammen and van de Geer \cite{MamvdG97} have considered it is the setting of nonparametric regression and have shown that the TV regularization provides an estimator that achieves the optimal min-max recovery rate in one dimension over noisy samples of functions in unit  ball with respect to the BV norm. TV based regularizations in hifher dimensions have been considered by 
 by Tibshirani, Saunders, Rosset, Zhu, and Knight \cite{tibshirani_fused} who call the functional fused LASSO. 
H\"utter and Rigollet \cite{HutRig16}, show that, up to logarithms, in dimension $d \geq 2$, the TV regularization on grids achieves the optimal min-max rate over the unit  ball with respect to the BV norm.
Recently, 
Padilla, Sharpnack, Chen and Witten \cite{PSCW18} show for random. geometric graphs and for KNN graphs that  up to logarithms, in dimension $d \geq 2$, TV regularization again achieves the optimal min-max rate.

The paper \cite{hallac2015} by Hallac,  Leskovec,  Boyd extends fused LASSO to the graph setting and considers some further functionals which are closely related to the graph Mumford--Shah functional we consider here. In particular the initial models of the paper deal with convex functionals which include graph total-variation based terms, and are thus called ``Network LASSO''. The second part of the paper modifies the total-variation term, which leads to nonconvex functionals. 
Here we interpret some of these nonconvex functionals, in particular model (7) of \cite{hallac2015}), as the  graph-based Mumford--Shah functional, which, together with out asymptotic results, explains the behavior of these models.
Wang, Sharpnack, Smola, and  Tibshirani \cite{WSST_trend}  consider higher order total variation regularizers on graphs. further extensions. 
We also note that the use of total variation penalization for signal denoising and filtering has also been considered in the signal processing community, see for example the work of Chen, Sandryhaila, Moura, and Kova\v{c}evi\'c \cite{chen_sand_moura_SRG}.

%
%
%
%

\section{Setting and Main Results} \label{sec:setting}

\subsection{Continuum Mumford--Shah functional and its nonlocal approximation}
In their celebrated paper \cite{mumford1989optimal}, Mumford and Shah proposed a variational approach for image segmentation. Given a domain $\Omega\subset \R^d$ and a potentially  noisy image with intensity $f$ they sought  to approximate it by a piecewise smooth function $u$, whose discontinuities delineate the segments of the image. 

We recall their functional using the formulation in the space of special functions of bounded variation.
For background on spaces of (special) functions of bounded variation we refer the reader to the book
\cite{ambrosio2000functions}. For $u\in SBV(\Omega)$
\begin{equation}\label{eqn:MSf}
	MS_f(u):= \lambda \int_\Omega |u-f|^2 dx + \int_{\Omega} |\nabla u|^2 \d x +  \H^{d-1}(S_u)
\end{equation}
where $f \in L^\infty(\Omega)$ is the noisy image, $\nabla u$ is the absolutely continuous (in the measure theoretic sense, and with respect to the Lebesgue measure)  part of the gradient $Du$ (which is a measure) of the function $u$, $\,S_u$ is the jump set of $u$,  and $\H^{d-1}$ is the $(d-1)-$dimensional Hausdorff measure. The first term of the functional ensures the closeness of the approximation $u$ to the original image $f$ while the next two terms reward the regularity of $u$. The idea is that natural images are piecewise smooth, but often do have jumps in intensity between different regions. Thus the terms of the functional reward the regularity of $u$, while still allowing jumps in the intensity. 

Thanks to the work of  Ambrosio in \cite{ambrosio1989compactness} and to the lower-semicontinuity of $MS_f$ with respect to the topology of the space $SBV(\Omega)$, the direct method of calculus of variation ensures us that a minimum $u_0\in SBV(\Omega)$ for the functional \eqref{eqn: MS functional} is always attained. 

For the considerations we have in mind the fidelity term $\lambda \int_\Omega |u-f|^2 dx$ is quite straightforward to treat. Hence, for readability, we introduce the functional without it and focus mainly on this functional: 
\begin{equation}\label{eqn: MS functional}
	MS(u):= \int
_{\Omega} |\nabla u|^2 \d x +  \H^{d-1}(S_u).
\end{equation}

As shown in \cite{braides1997non} any functional of the form of \eqref{eqn: MS functional} cannot be approximated in the sense of $\Gamma$-convergence by \textit{local} integral functional of the type
	\[
	\int_{\Omega} h_{\varepsilon}(\nabla u(x)) \d x
	\]
for $u\in W^{1,2}(\Omega)$.  De Giorgi conjectured that the Mumford-Shah functional can be approximated by nonlocal functionals. The conjecture was proved by  Gobbino in \cite{gobbino1998finite}, who showed that \eqref{eqn: MS functional} can be approximated by the functionals 
\begin{equation}\label{eqn: nonlocal MS}
{G}_{\varepsilon} (u) := \frac{1}{\varepsilon^{d+1}} \int_{\R^d \times \R^d} \arctan\left(\frac{|u(y) - u(x)|^2}{|y-x|}\right) e^{-\frac{|y-x|^2}{\e^2}} \d x \d y
\end{equation}
defined for $u\in L^1_{loc}(\Omega)$. He shows that for appropriate dimensional constants $\theta, \sigma$
	\[
	\Glim_{\e\rightarrow 0} {G}_{\e}= \theta\int_{\Omega} |\nabla u|^2 \d x + \sigma \H^{d-1}(S_u)
	\]
where the $\Gamma$-limit is considered with respect to $L^1$ topology.
The work in \cite{gobbino1998finite} has been then generalized in \cite{gobbino2001finite} 
to functionals defined on $SBV(\Omega)$ of the form 
	\begin{equation}\label{eqn: general MS}
	\F(u):=\int_{\Omega} \varphi \left(|\nabla u(x)|\right)  \d x + \int_{S_u} \psi(|u^+(x) - u^-(x)|) \d \H^{n-1}(x)
	\end{equation}
where $u^+(x)$ and $u^-(x)$ denote the so-called  approximate $\liminf$ and $\limsup$ of $u$ at the point $x$:
\begin{equation} \label{eq:uplus}
u^+(x) = \sup \left\{ t \in \R \::\; \lim_{r \to 0+} \frac{1}{r^n} |\{ y \in B(x,r) \::\: u(y) > t \}|  > 0 \, \right\}. 
\end{equation}
They show that   for suitable $\varphi, \psi$ the functional can be approximated in the $\Gamma$-convergence sense with the family of non-local functionals of the form 
	\begin{equation}\label{eqn: general nonlocal MS}
	\F_{\e}(u):=\int_{\R^d\times \R^d} \varphi_{|x-y|} \left( \frac{|u(x) - u(y)|}{|x-y|} \right) \eta_{\e}\left(x-y\right) \d x \d y
	\end{equation}
where $\{\varphi_{\e}\}_{\e}$ is a family of functions related to $\varphi, \psi$ and $\{\eta_{\e}\}_{\e>0} \subset  L^1(\Omega)$ is a kernel.  


\subsection{Point cloud Mumford--Shah functional} \label{sec:assump}
The above nonlocal approximation to the Mumford--Shah functional can be adapted to the graph setting. We consider the setting of random geometric graphs formulated on random samples of a measure $\mu$ with density $\rho$, which describes the underlying data distribution.
Consider an open, bounded set with Lipschitz boundary $\Omega$.  The density $\rho$ is assumed to satisfy: 
$\rho \in C^1(\Omega) \cap C^0(\overline{\Omega})$ and
	\begin{equation}\label{eqn: lwr bound on rho}
0< c \leq\min_{x\in \overline{\Omega}}  \, \rho(x) \leq \max_{x\in \in \overline{\Omega}} \, \rho(x) \leq C<\infty.	
	\end{equation}
%
We consider $\zeta:[0,\infty) \rightarrow [0,\infty)$ such that
	\begin{itemize}
	\item[(A1)] $\zeta$ is concave and differentiable at $0$;
	\item[(A2)] $\zeta$ is non decreasing;
	\item[(A3)] $\zeta'(0)<\infty$ and
\begin{equation} \label{eq:theta}
 \Theta:=\lim_{x\rightarrow \infty} \zeta(x).
\end{equation}
\end{itemize}
We fix $p\geq 1$, $q\in [0,p)$ and we assume that the kernel $\eta:[0,\infty) \rightarrow [0,\infty)$ satisfies
	\begin{itemize}
	\item[(B1)] $\eta$ is a nonincreasing  $L^1$ function, non identically $0$;
	\item[(B2)] $0< \int_0^{\infty} (t^{d} +  t^{p-q+d-1} ) \eta(t) \d t <\infty$.
	\end{itemize}

In the sequel, we always assume the functions $\eta,\zeta$ and $\rho$ to satisfy the above assumption.
\medskip 

Let $x_1,\ldots, x_n \in \Omega$ a set of $n$ i.i.d random points on $\Omega$ chosen according to $\mu=\rho\d x$. The empirical measure of the sample is defined by 
	\[ 
	\mu_n:=\frac{1}{n}\sum_{i=1}^n \delta_{x_i} 
	\] 
Given a Borel measure $\sigma$ on $\Omega$, the space  $L^p(\Omega, \sigma)$ is the space of equivalence classes of measurable functions $u : \Omega \to \R$ with 
$\int_{\Omega} |u|^p \d \sigma $ finite.   Notice that, under this assumption on $\rho$, we have that $L^1(\Omega;\rho)=L^1(\Omega)$. For that reason we often write $u\in L^1(\Omega)$ in place of $u\in L^1(\Omega;\rho)$.
\medskip

The  graph Mumford--Shah functional we devote the most attention to is the functional \eqref{eq:GMSfen} without the fidelity term. Namely  for a function $u\in L^1(\Omega;\mu_n)$ let 
\begin{equation}\label{eqn: graph non-local MS with the p}
	\mathcal{GMS}_{\e,n}(u):=\frac{1}{\e} \frac{1}{n^2}\sum_{i,j=1}^n \zeta\left( \e^{1-p+q} \frac{|u(x_i) - u(x_j)|^p}{|x_i-x_j|^q}\right) \eta_{\e}(|x_i - x_j|)
	\end{equation}
Here $\eta_{\e}(s):=\e^{-d}\eta(s/\e)$.\\

\subsection{Main results} We prove a $\Gamma$-convergence result (see Theorem \ref{thm main: G conv})  stating that the Graph Mumford--Shah  functional \eqref{eqn: graph non-local MS with the p} $\Gamma$-converges (in the $TL^1$ sense, recalled in  subsection \ref{sbsec: TLP} below), along suitable sequence $\{\e_n\}_{n\in \N}$ (see Remark \ref{rmk: on the eps condition}) to 
\begin{equation}\label{eqn: weighted MS with density}
MS_{\eta,\zeta}(u;\rho):= \vartheta_{\eta}(p,q)\zeta'(0) \int_{\Omega} |\nabla u(x)|^p \rho(x)^2 \d x 
 +\sigma_{\eta}  \Theta \int_{S_u} \rho(y)^2\d \H^{d-1}(y)
		\end{equation}
defined for all $u\in SBV^p(\Omega)$  and where $\Theta$ is defined by \eqref{eq:theta} and 
	\begin{equation} \label{eqn: costants definition}
	\left\{
	\begin{array}{rl}
	\vartheta_{\eta}(p,q)&:=\displaystyle2\omega_{d-1} \frac{\Gamma(p/2+1/2)\Gamma(d/2+1/2)}{\Gamma(p/2+d/2)} \int_0^{\infty} t^{p-q+d-1}\eta(t)\d t
	\text{}\\
\sigma_{\eta}&:=\displaystyle2\omega_{d-1}\int_{0}^{\infty}t^{d} \eta(t) \d t.\\
	\end{array}
	\right.
\end{equation}
We point out that assumption (B2) on $\eta$ is the one that guarantees the finiteness of $\sigma_{\eta},\vartheta_{\eta}(p,q)$.  With all these in mind  we are able to  show the validity of the following statement.
\begin{theorem}[$\Gamma$-convergence]\label{thm main: G conv}
Let $\Omega$ be an open set and $\rho$ be a probability density satisfying \eqref{eqn: lwr bound on rho}. Consider $\zeta, \eta$ satisfying the assumptions (A1)-(A3)  and (B1)-(B2). 

Let $\{x_i\}_{i\in \N}$ be a sequence of i.i.d. random points chosen accordingly to the density $\rho$ and $\{\e_n\}_{n\in \N}$ be a sequence of positive number converging to $0$ such that
\begin{equation}\label{eqn: condition on eps}
\displaystyle\lim_{n\rightarrow \infty} \frac{(\log(n) )^{1/d}}{\e_n n^{1/d}}=0  \ \ \ \ \ \text{for $d\geq 2$}.
\end{equation}
Then $\mathcal{GMS}_{\e_n,n}$, defined in \eqref{eqn: graph non-local MS with the p},  $\,\Gamma-$converges to $MS_{\eta,\zeta}(\cdot; \rho)$, defined in \eqref{eqn: MS functional}, in the $TL^1$ sense.
\end{theorem}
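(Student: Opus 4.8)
The plan is to adopt the by-now standard strategy for discrete-to-continuum $\Gamma$-convergence on random geometric graphs introduced in \cite{GTS16}: first transfer the problem to the continuum through the $TL^1$ topology, then compare the discrete double sum with a nonlocal continuum functional of the Gobbino--Mora type \eqref{eqn: general nonlocal MS}, and finally invoke a localized, $\rho$-weighted version of the deterministic $\Gamma$-convergence of such nonlocal approximations of the Mumford--Shah functional proved in \cite{gobbino2001finite}. The feature that makes the argument nonstandard — and that accounts for the sharp scaling \eqref{eqn: condition on eps} in every dimension $d\geq2$ — is the scale-invariant denominator $|x_i-x_j|^{-q}$ inside $\zeta$, which is not controlled by the transport map; to deal with it one needs the improved convergence estimates of Lemma~\ref{lem: improved convergence} and Lemma~\ref{lem:improved convergence TLp} and the bookkeeping of Remark~\ref{rem:drill}.

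\emph{Step 1 (reduction to a deterministic statement).} Under the hypothesis \eqref{eqn: condition on eps} there exist, almost surely and for all large $n$, transport maps $T_n\colon\Omega\to\Omega$ with $(T_n)_\sharp\mu=\mu_n$ and $\|T_n-\mathrm{id}\|_{L^\infty(\Omega)}=o(\e_n)$; this is the sharp estimate on the $\infty$-transportation distance between an i.i.d.\ sample and its common law, valid precisely in the regime \eqref{eqn: condition on eps} for $d\geq2$. Composition with $T_n$ embeds $L^1(\Omega;\mu_n)$ into $L^1(\Omega;\mu)$ and realizes $TL^1$-convergence $u_n\to u$ as $L^1(\Omega)$-convergence of the transported functions $v_n:=u_n\circ T_n$. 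Hence it suffices to establish the $\liminf$ and $\limsup$ inequalities with $\mathcal{GMS}_{\e_n,n}(u_n)$ compared, up to vanishing errors, with the nonlocal continuum energy
\[
\widetilde{\mathcal{GMS}}_{\e}(v):=\frac{1}{\e}\int_{\Omega\times\Omega}\zeta\!\left(\e^{1-p+q}\,\frac{|v(x)-v(y)|^p}{|x-y|^q}\right)\eta_{\e}(|x-y|)\,\rho(x)\rho(y)\,\d x\,\d y
\]
evaluated at $v_n$.

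\emph{Step 2 (from the discrete sum to $\widetilde{\mathcal{GMS}}_{\e_n}$).} This is the heart of the matter. Pushing forward by $T_n$ replaces $\tfrac{1}{n^2}\sum_{i,j}$ by $\int\!\!\int\rho\rho$ but turns both the kernel argument and the denominator inside $\zeta$ into $|T_n(x)-T_n(y)|$ instead of $|x-y|$, and for pairs with $|x-y|$ of order $\|T_n-\mathrm{id}\|$ these are not uniformly comparable. I would use that $\zeta$ is nondecreasing, $\zeta'(0)$-Lipschitz, subadditive (being concave with $\zeta(0)=0$) and bounded by $\Theta$: for pairs with $|x-y|$ of order $\e_n$ or larger one has $|T_n(x)-T_n(y)|/|x-y|=1+o(1)$, so replacing $\eta_\e$ by a slightly dilated kernel $\eta_{(1\pm\delta)\e}$ absorbs the perturbation, and by (B2) the constants $\vartheta_\eta(p,q),\sigma_\eta$ in \eqref{eqn: costants definition} vary by only $o(1)$ as $\delta\to0$; the genuinely dangerous pairs are those at scale $\lesssim\e_n$ across which $v$ jumps, whose contribution must be absorbed into the $\Theta$-bound times a small $\rho\otimes\rho$-measure. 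Rather than transporting pointwise, the efficient way to carry out this comparison — and the reason the optimal scaling holds already in $d=2$ — is to bin points at an intermediate scale $h_n$ with $\|T_n-\mathrm{id}\|\ll h_n\ll\e_n$, as in M\"uller and Penrose \cite{muller2018optimal}; this is precisely the content of Lemma~\ref{lem: improved convergence} and Lemma~\ref{lem:improved convergence TLp}, whose use I regard as the main obstacle to a clean proof.

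\emph{Step 3 (deterministic $\Gamma$-convergence of $\widetilde{\mathcal{GMS}}_{\e}$).} It remains to prove that $\widetilde{\mathcal{GMS}}_{\e}$ $\Gamma$-converges in $L^1(\Omega)$ to $MS_{\eta,\zeta}(\cdot;\rho)$, for which I would adapt \cite{gobbino2001finite}. The weight $\rho(x)\rho(y)$ is handled by localization: on a small ball around $x_0$ one has $\rho(x)\rho(y)=\rho(x_0)^2(1+o(1))$, so the unweighted Gobbino--Mora $\Gamma$-limit with frozen constant applies, and one concludes by the usual superadditivity and sup-of-measures argument; the bulk term linearizes $\zeta$ near $0$ (producing the factor $\zeta'(0)$) while on the jump set $\zeta$ saturates to $\Theta$, and the constants $\vartheta_\eta(p,q),\sigma_\eta$ emerge from the one-dimensional slicing computation (the bulk constant being the spherical average of $|\omega\cdot e|^p$, whence the $\Gamma$-function ratio). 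For the $\limsup$ inequality I would first build recovery sequences for $u\in SBV^p(\Omega)$ that are Lipschitz off a polyhedral jump set — taking mollifications or restrictions of $u$ itself, for which the double integral is estimated directly — and then pass to general $u\in SBV^p(\Omega)$ by the energy-density of such functions together with a diagonal argument, closing the $\liminf$ with the lower semicontinuity of $MS_{\eta,\zeta}(\cdot;\rho)$.
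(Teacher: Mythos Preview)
Your overall architecture is right, but Step~1 contains a false claim that undermines the rest. You assert that under \eqref{eqn: condition on eps} there exist transport maps $T_n$ with $(T_n)_\sharp\mu=\mu_n$ and $\|T_n-\mathrm{id}\|_{L^\infty}=o(\e_n)$. This is \emph{not} true in $d=2$: the sharp $\infty$-transportation distance between $\mu$ and $\mu_n$ scales like $(\log n)^{3/4}/\sqrt{n}$, which is not $o(\e_n)$ when $\e_n$ is chosen only slightly larger than $(\log n/n)^{1/2}$. You seem aware of the issue later when you invoke Lemma~\ref{lem: improved convergence}, but you misread its role: the binning does not refine a comparison built on a map $T_n$ from $\mu$ to $\mu_n$; rather it \emph{replaces} $\mu$ by an auxiliary absolutely continuous measure $\tilde\mu_n$, and $T_n$ is the $d_\infty$-optimal map from $\tilde\mu_n$ to $\mu_n$. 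One then has $\|T_n-\mathrm{id}\|_\infty=o(\e_n)$ genuinely, and the residual passage from $\tilde\mu_n$ to $\mu$ is handled multiplicatively via $d\tilde\mu_n/d\mu\to1$ uniformly (this is where Lemma~\ref{lem:improved convergence TLp} and the auxiliary functional $\overline{\mathcal{GMS}}$ enter). Without this two-stage structure your reduction to $\widetilde{\mathcal{GMS}}_\e(v_n)$ with $v_n=u_n\circ T_n$ simply does not go through at the optimal scale in $d=2$.

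Your handling of the denominator $|T_n(x)-T_n(y)|^{-q}$ in Step~2 is also too vague to close. Saying that dangerous pairs are ``absorbed into the $\Theta$-bound times a small measure'' is not a proof: for the liminf one cannot afford to throw away mass, and for pairs with $|x-y|$ comparable to $\ell_n=\|T_n-\mathrm{id}\|_\infty$ the ratio $|T_n(x)-T_n(y)|/|x-y|$ is uncontrolled. The paper's device (Remark~\ref{rem:drill}) is cleaner: replace $\eta$ by $\eta^r(t)=\eta(t)\ca_{[r,\infty)}(t)$, which for the liminf only decreases the energy; on the surviving set one has $|T_n(x)-T_n(y)|\geq r\e_n$, hence $|T_n(x)-T_n(y)|=(1+o(1))|x-y|$ uniformly, and the monotonicity of $t\mapsto\zeta(a/t^q)\eta^r_\e(t)$ does the rest. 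One recovers the full kernel by sending $r\to0$ at the very end, using continuity of $\vartheta_{\eta^r},\sigma_{\eta^r}$ in $r$.

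Finally, your Step~3 (freeze $\rho$ locally, apply unweighted Gobbino--Mora, then sup-of-measures) is a legitimate alternative to what the paper does, which is to carry the weight $\rho^2$ through the one-dimensional slicing directly via a weighted version of the Gobbino--Mora lower bound (Lemma~\ref{lem: hope is true}). Both work; the slicing route is perhaps more direct here because the weight $\rho(x)^2$ factors out along each slice and the one-dimensional lemma with weight is a short partition argument.
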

We refer to \cite{GTS16} for detailed introduction of the $TL^p$ topology. For the reader's convenience we retrieve the main concepts in Subsection \ref{sbsec: TLP} below.
\begin{remark}\label{rmk: on the eps condition}
The condition \eqref{eqn: condition on eps} of Theorem \ref{thm main: G conv} comes from the following fact. Given random samples $\{x_1, \dots, x_n\}$ as above, we show in Lemma \ref{lem: improved convergence} that there exists  a sequence of probability measures $\tilde{\mu}_n$, absolutely continuous with respect to Lebesgue such that $\frac{\d \tilde{\mu}_n}{\d \mu} \rightrightarrows 1$ and whose $\infty$-Wasserstein distance from the empirical measure of the sample $\mu_n$ is decaying faster than $\e_n$. More precisely, there exist $T_n:\Omega\rightarrow \{x_1, \dots, x_n\}$  transport maps between $\tilde{\mu}_n=\rho_n \L^d$ and $\mu_n=\frac{1}{n}\sum_{i=1}^n \delta_{x_i}$, such that
\begin{equation} \label{eqn: decay on epsilon}
\displaystyle\lim_{n\rightarrow \infty} \frac{ \|T_n-\text{Id}\|_{\infty}}{\e_n } =  0.
\end{equation}
\end{remark}
In Section \ref{sec: compactness} we discuss compactness of the functionals. 
In particular we establish the following theorem:
\begin{theorem}\label{thm : compact}
Let $\Omega$, $\rho$, $\zeta$, $\eta$, and $x_i,\; \, i=1, \dots, n$ satisfy the assumptions of Theorem \ref{thm main: G conv}. Consider a sequence of $\{\e_n\}_{n\in \N}$ satisfying \eqref{eqn: condition on eps}. If $u_n \in L^{\infty}(\Omega;\mu_n)$ satisfy
	\[
	\sup_{n\in \N}\left\{\|u_n\|_{\infty}+\mathcal{GMS}_{\e_n,n}(u_n)\right\}<\infty,
	\]
then the sequence $\{(\mu_n,u_n)\}_{n \in N}$ it is $TL^1$-relatively compact.
\end{theorem}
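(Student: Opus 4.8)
The plan is to deduce $TL^1$-relative compactness from ordinary $L^1(\Omega)$-precompactness of the transport-interpolations of the $u_n$, and to get the latter by bounding $\mathcal{GMS}_{\e_n,n}$ from below by a continuum Gobbino--Mora functional whose equicoercivity is classical. \emph{Reduction.} Almost surely $\mu_n\rightharpoonup\mu$, and by Lemma~\ref{lem: improved convergence} there are densities $\rho_n\rightrightarrows\rho$ and transport maps $T_n$ with $(T_n)_\#(\rho_n\L^d)=\mu_n$ and $r_n:=\|T_n-\mathrm{Id}\|_\infty=o(\e_n)$. Set $\tilde u_n:=u_n\circ T_n\in L^\infty(\Omega)$; then $\|\tilde u_n\|_{L^\infty}\le\sup_n\|u_n\|_{L^\infty(\mu_n)}<\infty$, and coupling $\rho_n\L^d$ and $\mu_n$ along the graph of $T_n$ (which has zero ``vertical'' cost, as $\tilde u_n=u_n\circ T_n$) shows $d_{TL^1}\bigl((\rho_n\L^d,\tilde u_n),(\mu_n,u_n)\bigr)\le r_n\to0$. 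Since moreover $\rho_n\L^d\rightharpoonup\mu$ and $\mu,\rho_n\L^d$ have densities bounded above and below (so the ambient $L^1$-norms are equivalent), the properties of the $TL^1$-metric recalled in Subsection~\ref{sbsec: TLP} reduce the statement to showing that $\{\tilde u_n\}_n$ is precompact in $L^1(\Omega)$: if $\tilde u_{n_k}\to v$ in $L^1$, then $(\rho_{n_k}\L^d,\tilde u_{n_k})\to(\mu,v)$ in $TL^1$, hence $(\mu_{n_k},u_{n_k})\to(\mu,v)$ in $TL^1$.

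\emph{Energy lower bound.} Transporting the double sum in $\mathcal{GMS}_{\e_n,n}(u_n)$ to $\Omega\times\Omega$ through $T_n$ and using that only the terms $i\ne j$ contribute,
\[
\mathcal{GMS}_{\e_n,n}(u_n)=\frac1{\e_n}\int\!\!\int_{T_n\xi\ne T_n\eta}\zeta\!\left(\e_n^{1-p+q}\frac{|\tilde u_n(\xi)-\tilde u_n(\eta)|^p}{|T_n\xi-T_n\eta|^q}\right)\eta_{\e_n}(|T_n\xi-T_n\eta|)\,\rho_n(\xi)\rho_n(\eta)\,\d\xi\,\d\eta .
\]
The obstacle is that $T_n$ may distort the distance between two points by a relative factor $r_n/|\xi-\eta|$, which is \emph{not} $o(1)$ for $|\xi-\eta|\ll\e_n$, so the denominators inside $\zeta$ — and with them the comparison to a continuum integral — are a priori uncontrolled (cf.\ Remark~\ref{rem:drill}). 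We dispose of this by sacrificing short-range pairs: fix $\delta>0$ small (possible since $\eta$ is nonincreasing and $\not\equiv0$) and restrict the integral to $\{|\xi-\eta|\ge2\delta\e_n\}$, all integrands being nonnegative. On this set $\bigl|\,|T_n\xi-T_n\eta|-|\xi-\eta|\,\bigr|\le2r_n$ gives $0<|T_n\xi-T_n\eta|\le(1+\beta_n)|\xi-\eta|$ with $\beta_n:=r_n/(\delta\e_n)\to0$; using that $\zeta$ is nondecreasing, $\eta$ nonincreasing, $\rho_n\ge c/2$ for $n$ large, $\zeta(\lambda t)\ge\lambda\zeta(t)$ for $\lambda\in[0,1]$ (concavity), and $\zeta(t)\ge\zeta(1)\min(t,1)$ (concavity again), we obtain, for $n$ large,
\[
\mathcal{GMS}_{\e_n,n}(u_n)\ \ge\ c_0\,\underline G_{\tilde\e_n}(\tilde u_n),\qquad
\underline G_\e(v):=\frac1\e\int\!\!\int_{|\xi-\eta|\ge3\delta\e}\min\!\left(\e^{1-p+q}\frac{|v(\xi)-v(\eta)|^p}{|\xi-\eta|^q},1\right)\eta_\e(|\xi-\eta|)\,\d\xi\,\d\eta ,
\]
where $\tilde\e_n:=\e_n/(1+\beta_n)\sim\e_n\to0$ and $c_0>0$ depends only on $\delta,\zeta(1),c,d,p$. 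In particular $\sup_n\bigl\{\|\tilde u_n\|_{L^\infty}+\underline G_{\tilde\e_n}(\tilde u_n)\bigr\}<\infty$.

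\emph{Equicoercivity and conclusion.} The $\underline G_\e$ belong to the family of nonlocal Gobbino--Mora functionals \eqref{eqn: general nonlocal MS} — their kernel is $\eta$ cut off below the scale $3\delta\e$, and rescaling $|\xi-\eta|=\e s$ one reads off, together with the normalisation $1/\e$, that in the limit they produce a Dirichlet term $\propto\int_{3\delta}^\infty s^{p-q+d-1}\eta(s)\,\d s$ and a perimeter term $\propto\int_{3\delta}^\infty s^d\eta(s)\,\d s$, both finite by (B2); this is precisely the situation analysed in \cite{gobbino1998finite,gobbino2001finite}. The equicoercivity in this regime — any $v_n$ with $\e_n\to0$, $\sup_n\|v_n\|_{L^\infty}<\infty$ and $\sup_n\underline G_{\e_n}(v_n)<\infty$ is precompact in $L^1(\Omega)$ (equi-integrability from the $L^\infty$-bound; control of translations from $\underline G_{\e_n}$ via a Fréchet--Kolmogorov/chaining argument, for which it is immaterial that the kernel vanishes near $0$, being positive on a nondegenerate spherical shell) — is classical; see \cite{gobbino1998finite,gobbino2001finite}, or reduce to one dimension by slicing. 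It gives that $\{\tilde u_n\}$ is precompact in $L^1(\Omega)$, whence, by the reduction above, $\{(\mu_n,u_n)\}_n$ is $TL^1$-relatively compact. (The same conclusion is in fact implicit in the proof of the $\Gamma$-$\liminf$ inequality of Theorem~\ref{thm main: G conv}.)

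\emph{Main obstacle.} The one genuinely delicate point is the passage from the empirical functional to a continuum one in the presence of the uncontrolled denominator $|x_i-x_j|^q$ — i.e.\ Remark~\ref{rem:drill}. For compactness it suffices to handle this crudely and one-sidedly, by discarding pairs at distance $<2\delta\e_n$; the sharper two-sided treatment required for the precise $\Gamma$-convergence constants of Theorem~\ref{thm main: G conv} is considerably more involved.
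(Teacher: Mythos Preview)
Your strategy is essentially the paper's: pull the functional back through the transport maps $T_n$ from Lemma~\ref{lem: improved convergence}, cut out the short-range pairs to tame the denominator $|T_n x-T_n y|^q$ (your $|\xi-\eta|\ge 2\delta\e_n$ plays the role of the paper's $\eta^r$ and shifted kernel $\bar\eta(t)=\eta(t+r)$), obtain a uniform bound on a continuum Gobbino--Mora-type functional of $\tilde u_n=u_n\circ T_n$, and then invoke the Gobbino--Mora equicoercivity. The concavity inequalities $\zeta(\lambda t)\ge\lambda\zeta(t)$ and $\zeta(t)\ge\zeta(1)\min(t,1)$ you use are correct and give a slightly cleaner route than the paper's explicit splitting into $A_\e$ and its complement.

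There is, however, one genuine gap. You write that the Gobbino--Mora compactness for $\underline G_\e$ on $\Omega$ is ``classical'' and follows from a Fr\'echet--Kolmogorov/chaining argument. But the result you cite (Proposition~\ref{prop: cmp gobbino}, essentially \cite[Theorem~5.1]{gobbino2001finite}) is stated for functions on $\R^d$ with the double integral over $\R^d\times\R^d$; on a bounded domain the translation control breaks down near $\partial\Omega$, and equi-integrability from the $L^\infty$ bound does not repair this. The paper handles this carefully: it proves an extension lemma (Lemma~\ref{lem:extension}) that reflects $\tilde u_n$ across $\partial\Omega$ while preserving a uniform bound on a nonlocal energy on $\R^d\times\R^d$, first for $C^2$ boundaries, and then reduces Lipschitz $\Omega$ to a smooth domain via a bi-Lipschitz map from \cite{ball2017partial}. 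This boundary step is not a triviality here---the nonlocal energy involves the kernel at scale $\e$, and one must check that the reflection is compatible with it---so your appeal to compactness on $\Omega$ as ``classical'' skips a real piece of the argument. (Your closing parenthetical that compactness is implicit in the $\Gamma$-$\liminf$ proof is also not right: the $\liminf$ inequality assumes $TL^1$ convergence and does not yield it.)
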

\begin{remark}
Note that we ask for an $L^{\infty}$ bound on the sequence, instead of a weaker $L^1$ bound (as it is done in \cite[Theorem 1.2]{GTS16}). Here $L^1$ bound would not be sufficient as we show in Section \ref{sec: compactness} and Remark \ref{rmk: counterexample}. On the other hand since the signal $f$ in \eqref{eqn:MSf} is bounded in applications, the minimizers are also bounded by the same bound.  
\end{remark}

\begin{remark}  \label{rem:ruf}
Recently Ruf \cite{Ruf} has studied the convergence of graph Mumford--Shah functionals on random lattices to the continuum Mumford--Shah functional. These  interesting results are closely related, but also substantially different both in terms of their nature and the techniques used. One difference is the nature of randomness of the structure considered. Here we consider random samples, or in fact any discrete sets of points whose empirical measures weakly approximate the continuum measure. Ruf considers random lattices in 2D, which have precise requirements of the distribution of points. The disordered structure of the points we allow forces us to require that the typical degree of a vertex converges to infinity faster than $\log n$, while Ruf is able to work with graphs of bounded degree. On the flip side we identify the $\Gamma$ limit explicitly, while, due to the graph construction, Ruf only identifies the functional up to a constant. In a sense he is able to work under homogenization type graph-behavior where the compactness arguments show that $\Gamma$ limit exists without fully identifying it. 
\end{remark}

\medskip

\subsubsection{Convergence of functionals with the fidelity term.} 
The Theorems \ref{thm main: G conv} and \ref{thm : compact} enable us to show the convergence of the Mumford--Shah functional with the fidelity term as well. We establish two results. The first one is in the setting without noise. In order to be able to evaluate the signal at sample points we assume that $f$ is a bounded piecewise continuous function, that is that the set of discontinuities $J_f$ is of finite ${d-1}$ dimensional Hausdorff measure, $\H^{d-1}(J_f) < \infty$.

\begin{corollary} \label{cor:nonoise}
Let $\Omega$, $\rho$, $\zeta$, $\eta$, and $x_i,\; \, i=1, \dots, n$ satisfy the assumptions of Theorem \ref{thm main: G conv} and assume that $p\geq q+1$.
Assume $f : \Omega \to \R$ is a bounded, piecewise continuous function. 
Assume also that $p\geq q+1$.
Consider a sequence of $\{\e_n\}_{n\in \N}$ satisfying \eqref{eqn: condition on eps}. 
Then the functional $\mathcal{GMS}_{f,\e,n}$ defined in \eqref{eq:GMSfen},  considered with $f_i = f(x_i)$ for $i=1, \dots, n$, $\Gamma$-converges in $TL^2$ topology to  $MS_{\eta, \zeta}(u; \rho)+ \int |u-f|^2 \rho(x) dx $,  where $MS_{\eta, \zeta}(u; \rho)$ is defined in  \eqref{eqn: weighted MS with density}. Furthermore any sequence of minimizers  $u_n$ of $\mathcal{GMS}_{f,\e,n}$,
converge along a subsequence to a minimizer of $MS_{\eta, \zeta}(u; \rho)+ \int |u-f|^2 \rho(x) dx $.
\end{corollary}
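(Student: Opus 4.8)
The plan is to deduce Corollary~\ref{cor:nonoise} from Theorems~\ref{thm main: G conv} and \ref{thm : compact} by a standard ``stability of $\Gamma$-convergence under continuous perturbations'' argument, once the fidelity term has been shown to be $TL^2$-continuous along the relevant sequence. First I would recall the general principle: if $\G_n \xrightarrow{\Gamma} \G$ in a metric space and $\Phi_n \to \Phi$ continuously (i.e.\ $z_n \to z$ implies $\Phi_n(z_n) \to \Phi(z)$), then $\G_n + \Phi_n \xrightarrow{\Gamma} \G + \Phi$; moreover if $\G_n + \Phi_n$ are equi-coercive (which here follows from Theorem~\ref{thm : compact} together with the fact that the fidelity term controls $\|u_n\|_{L^2(\mu_n)}$, hence with the $L^\infty$ truncation argument an $L^\infty$ bound), then minimizers of $\G_n + \Phi_n$ converge along subsequences to minimizers of $\G + \Phi$. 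So the corollary reduces to two verifications: (i) the fidelity perturbation $\Phi_n(u) = \lambda \int_\Omega |u - f|^2 \, \d\mu_n = \frac{\lambda}{n}\sum_i |u(x_i) - f(x_i)|^2$ converges continuously in $TL^2$ to $\Phi(u) = \lambda\int_\Omega |u-f|^2 \rho \, \d x$; (ii) the $\Gamma$-convergence can be upgraded from the $TL^1$ topology of Theorem~\ref{thm main: G conv} to the $TL^2$ topology, which is where the $L^\infty$ truncation and the hypothesis $p \geq q+1$ (ensuring that the truncation does not increase $\mathcal{GMS}_{\e,n}$, or only by a controlled amount) enter.

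For step (i), the subtlety is that $f$ is only piecewise continuous, so $f$ is not a continuous function and point evaluations $f(x_i)$ must be handled with care; this is exactly why the hypothesis $\H^{d-1}(J_f) < \infty$ is imposed. Given $(\mu_n, u_n) \to (\mu, u)$ in $TL^2$, by definition (as recalled via \cite{GTS16} in Subsection~\ref{sbsec: TLP}) there are transport maps $T_n$ with $(T_n)_\sharp \mu = \mu_n$, $\|T_n - \mathrm{Id}\|_{L^2(\mu)} \to 0$, and $u_n \circ T_n \to u$ in $L^2(\Omega; \mu)$. I would write $\Phi_n(u_n) = \lambda \int_\Omega |u_n(T_n(x)) - f(T_n(x))|^2 \rho(x)\,\d x$ and compare with $\lambda\int_\Omega |u(x) - f(x)|^2 \rho(x)\,\d x$. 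The term $u_n\circ T_n \to u$ is handled by $L^2$ convergence (using the uniform $L^\infty$ bound to pass from $L^1$ to $L^2$ convergence of the integrand if needed); the term $f\circ T_n \to f$ in $L^2(\mu)$ is the delicate one: since $f$ is bounded and continuous off $J_f$, and since $\|T_n - \mathrm{Id}\|_\infty \to 0$ (one may invoke the strong convergence from Remark~\ref{rmk: on the eps condition} / Lemma~\ref{lem: improved convergence}, replacing $\mu_n$ by $\tilde\mu_n$ at negligible cost), $f(T_n(x)) \to f(x)$ for every $x \notin J_f$ that is a point of continuity, hence Lebesgue-a.e., and dominated convergence gives $f\circ T_n \to f$ in $L^2$. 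Combining, $\Phi_n(u_n) \to \Phi(u)$, and the $\liminf$ inequality for the recovery sequence direction is automatic since we actually have continuity.

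For step (ii), to get $\Gamma$-convergence in $TL^2$ rather than $TL^1$ I would note that the $\liminf$ inequality is only easier in the stronger topology, so it is inherited from Theorem~\ref{thm main: G conv}. For the $\limsup$ (recovery sequence) inequality I would take the recovery sequence $u_n$ produced for the $TL^1$ statement; since the target functional $MS_{\eta,\zeta}(\cdot;\rho) + \Phi$ is $+\infty$ unless $u \in SBV^p(\Omega)$ and, for the minimization statement, we may restrict to $u \in L^\infty$ (truncating $u$ at the level $\|f\|_\infty$ only decreases both $MS_{\eta,\zeta}$, by $p \geq q+1$ and standard truncation estimates for the nonlocal energy, and the fidelity term), the recovery sequence can be taken uniformly bounded, and then $TL^1$ convergence of uniformly bounded functions upgrades to $TL^2$ convergence. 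Finally, equi-coercivity: a sequence with $\sup_n [\mathcal{GMS}_{f,\e_n,n}(u_n)] < \infty$ has, after truncation at $\|f\|_\infty$ (which does not increase the energy), $\sup_n[\|u_n\|_\infty + \mathcal{GMS}_{\e_n,n}(u_n)] < \infty$, so Theorem~\ref{thm : compact} gives $TL^1$-relative compactness, and uniform boundedness again promotes this to $TL^2$-relative compactness. The fundamental theorem of $\Gamma$-convergence then yields convergence of minimizers along subsequences to a minimizer of the limit functional.

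I expect the main obstacle to be step (i), specifically the continuity of $u \mapsto \int|u-f|^2\d\mu_n$ at the level of the piecewise-continuous signal $f$: one must be careful that the transport maps do not concentrate mass near $J_f$ and that point evaluations $f(x_i)$ are meaningful and stable. This is precisely where one leans on $\H^{d-1}(J_f) < \infty$ (so $|J_f| = 0$, hence a.e.\ point of $\Omega$ is a continuity point of $f$) together with the strong $L^\infty$-type transport estimate \eqref{eqn: decay on epsilon} from Remark~\ref{rmk: on the eps condition}, rather than the mere weak convergence $\mu_n \rightharpoonup \mu$. A secondary technical point is verifying cleanly that truncation at level $\|f\|_\infty$ does not increase $\mathcal{GMS}_{\e,n}$ — this uses that $\zeta$ is nondecreasing and that truncation is $1$-Lipschitz, so each difference $|u(x_i) - u(x_j)|$ can only decrease, hence each summand decreases regardless of the value of $q$; and for the corresponding statement for $MS_{\eta,\zeta}$ one uses the analogous pointwise monotonicity of $|\nabla u|^p$ and $\H^{d-1}(S_u)$ under truncation in $SBV^p$.
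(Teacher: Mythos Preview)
Your proposal is correct and follows essentially the same route as the paper's proof: both arguments reduce to showing that the fidelity term is $TL^2$-continuous along the sequence (via the $d_\infty$-optimal transport map $\overline T_n$ and the convergence $f\circ \overline T_n \to f$ in $L^2(\mu)$, which uses the piecewise continuity of $f$ and $\|\overline T_n - \mathrm{Id}\|_\infty \to 0$), then invoke Theorem~\ref{thm main: G conv} for the remaining term and Theorem~\ref{thm : compact} together with truncation at level $\|f\|_\infty$ for compactness, upgrading $TL^1$ to $TL^2$ by the uniform $L^\infty$ bound. Your dominated-convergence argument for $f\circ \overline T_n \to f$ is in fact a slightly cleaner packaging of the paper's explicit $\varepsilon$--$\delta$ argument with the tubular neighborhoods $\tilde J_{f,\delta}$, and your remark that truncation decreases each summand of $\mathcal{GMS}_{\e,n}$ simply because $\zeta$ is nondecreasing and truncation is $1$-Lipschitz (so $p\ge q+1$ is not actually needed at that step) is accurate.
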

We note that due to the fidelity term the topology of $\Gamma$ convergence in the corollary is $TL^2$ instead of $TL^1$. We remark that the change of the topology when considering the fidelity term was not needed in \cite{gobbino1998finite} since Gobbino could rely on the Fubini's theorem. However due the fact that we also deal with the discrete-to-continuum passage, 
the stronger topology is needed. 
\medskip

More importantly and more interestingly we are able to establish the convergence of minimizers of the graph Mumford--Shah functional when the labels are noisy. We note that the limit is a minimizer of a deterministic variational problem, even though the amount of noise does not vanish as $n \to \infty$. 

\begin{corollary} \label{cor:noise}
Let $\Omega$, $\rho$, $\zeta$, $\eta$ satisfy the assumptions of Theorem \ref{thm main: G conv} and assume that $p\geq q+1$.
Assume $f : \Omega \to \R$ is a bounded, piecewise continuous function. 
Let $\beta$ be a measure on $\R$ modeling the noise. We assume $\beta$ has compact support and mean zero. Let $(x_i, y_i)$ for $i=1, \dots, n$ be i.i.d. samples of the measure $\mu \times \beta$. 
Consider a sequence of $\{\e_n\}_{n\in \N}$ satisfying \eqref{eqn: condition on eps}. 

Then the functional $\mathcal{GMS}_{f,\e,n}$ defined in \eqref{eq:GMSfen},  considered with 
\[ f_i = f(x_i) + y_i  \qquad \te{for } i=1, \dots, n, \]
 $\Gamma$-converges in $TL^2$ topology to  $MS_{\eta, \zeta}(u; \rho)+ \int |u-f|^2 \rho(x) dx + \Var(\beta) $,  where $MS_{\eta, \zeta}(u; \rho) $ is defined in  \eqref{eqn: weighted MS with density}. Furthermore any sequence of minimizers  $u_n$ of $\mathcal{GMS}_{f,\e,n}$,
converge along a subsequence to a minimizer of $MS_{\eta, \zeta}(u; \rho)+ \int |u-f|^2 \rho(x) dx $.
\end{corollary}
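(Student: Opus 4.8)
The plan is to deduce Corollary \ref{cor:noise} from Corollary \ref{cor:nonoise} together with the $\Gamma$-convergence and compactness results of Theorems \ref{thm main: G conv} and \ref{thm : compact}, by splitting off the contribution of the noise. Write $f_i = f(x_i) + y_i$ and expand the fidelity term as
\begin{equation*}
\frac{\lambda}{n}\sum_{i=1}^n |u(x_i) - f(x_i) - y_i|^2 = \frac{\lambda}{n}\sum_{i=1}^n |u(x_i) - f(x_i)|^2 - \frac{2\lambda}{n}\sum_{i=1}^n (u(x_i)-f(x_i)) y_i + \frac{\lambda}{n}\sum_{i=1}^n y_i^2.
\end{equation*}
The last term converges, by the strong law of large numbers, almost surely to $\lambda\Var(\beta)$ (recall $\beta$ has mean zero, so its variance equals $\int y^2 \d\beta$), and is a constant independent of $u$, hence is harmless for $\Gamma$-convergence and for the location of minimizers. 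The key point will be to show that the cross term $\frac{2\lambda}{n}\sum_{i=1}^n (u(x_i)-f(x_i)) y_i$ is asymptotically negligible, uniformly enough along sequences with bounded energy, so that it does not affect either the $\liminf$/$\limsup$ inequalities or the convergence of minimizers.

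First I would record the deterministic ingredients: since $f$ is bounded and $\beta$ has compact support, any minimizer $u_n$ of $\mathcal{GMS}_{f,\e_n,n}$ satisfies an a priori $L^\infty$ bound (compare with the constant competitor, or truncate), so that Theorem \ref{thm : compact} applies and gives $TL^1$ — in fact, by the uniform bound, $TL^2$ — relative compactness of $\{(\mu_n,u_n)\}$. Next, for the $\Gamma$-$\liminf$ inequality, given $(\mu_n,u_n) \to (\mu,u)$ in $TL^2$ with $\sup_n \mathcal{GMS}_{f,\e_n,n}(u_n) < \infty$, I would argue that $\sup_n \|u_n\|_{L^\infty(\mu_n)} < \infty$ may be assumed (truncation decreases the energy up to controlled error, or the $TL^2$ limit forces it), and then bound
\begin{equation*}
\Bigl| \frac{1}{n}\sum_{i=1}^n (u_n(x_i)-f(x_i)) y_i \Bigr| \le \bigl(\|u_n\|_{L^\infty(\mu_n)} + \|f\|_\infty\bigr) \cdot \frac{1}{n}\sum_{i=1}^n |y_i|,
\end{equation*}
which is unsatisfactory since $\frac{1}{n}\sum|y_i| \to \int|y|\d\beta \ne 0$. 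So the correct estimate must use cancellation: conditionally on $\{x_i\}$ the variables $y_i$ are i.i.d. mean zero and bounded, so $\frac{1}{n}\sum_{i=1}^n g(x_i) y_i \to 0$ almost surely for any fixed bounded $g$ by the strong law; the subtlety is that $u_n$ is not fixed. I would handle this by a chaining/covering argument over the relevant class of functions, or — more cleanly — by using the $TL^2$ convergence $u_n \circ T_n \to u$ in $L^2(\mu)$ (with $T_n$ the transport maps of Remark \ref{rmk: on the eps condition}), approximating $u\in L^2(\Omega)$ by a fixed continuous function $v$, controlling $\frac{1}{n}\sum (u_n(x_i)-v(x_i))y_i$ by Cauchy–Schwarz and $\|u_n\circ T_n - v\|_{L^2}$, and letting $\frac{1}{n}\sum v(x_i) y_i \to 0$ by the law of large numbers; since this holds simultaneously (on one full-probability event) for $v$ ranging over a countable dense set, a diagonal argument closes the gap.

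With the cross term shown to vanish, the $\Gamma$-$\liminf$ follows from that of Corollary \ref{cor:nonoise}: $\liminf_n \mathcal{GMS}_{f,\e_n,n}(u_n) \ge MS_{\eta,\zeta}(u;\rho) + \lambda\int|u-f|^2\rho\,\d x + \lambda\Var(\beta)$. For the $\Gamma$-$\limsup$, given $u \in SBV^p(\Omega)\cap L^\infty$ I would take the recovery sequence $\tilde u_n$ of Corollary \ref{cor:nonoise} for the noiseless functional — these are uniformly bounded — and then the same cross-term estimate (now with $u_n = \tilde u_n$, which converges in $TL^2$ to $u$) plus the law of large numbers for $\frac{1}{n}\sum y_i^2$ yields $\limsup_n \mathcal{GMS}_{f,\e_n,n}(\tilde u_n) \le MS_{\eta,\zeta}(u;\rho) + \lambda\int|u-f|^2\rho\,\d x + \lambda\Var(\beta)$. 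Finally, convergence of minimizers is the standard consequence of $\Gamma$-convergence plus the compactness from Theorem \ref{thm : compact}: a sequence of minimizers $u_n$ has energy bounded by evaluating on a fixed competitor, hence is precompact in $TL^2$, and any limit point minimizes the limit functional; since the additive constant $\lambda\Var(\beta)$ does not affect minimizers, the limit minimizes $MS_{\eta,\zeta}(u;\rho)+\lambda\int|u-f|^2\rho\,\d x$. The main obstacle is precisely the uniform control of the cross term $\frac{1}{n}\sum_{i=1}^n(u_n(x_i)-f(x_i))y_i$ over the moving sequence $u_n$; everything else is bookkeeping on top of Corollary \ref{cor:nonoise} and Theorem \ref{thm : compact}.
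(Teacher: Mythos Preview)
Your proposal is correct and follows the same overall architecture as the paper: reduce to Corollary~\ref{cor:nonoise} plus Theorems~\ref{thm main: G conv} and~\ref{thm : compact} by expanding the fidelity term into the noiseless part, the cross term, and $\frac{1}{n}\sum y_i^2$, and then show the cross term vanishes. The limsup, compactness, and convergence-of-minimizers steps are handled in the paper exactly as you outline.

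Where you genuinely diverge is in the treatment of the cross term $\frac{1}{n}\sum_i (u_n(x_i)-f(x_i))y_i$. The paper lifts everything to the product space $\Omega\times\R$: it considers the empirical measure $\gamma_n=\frac{1}{n}\sum_i\delta_{(x_i,y_i)}$, takes the optimal Wasserstein plan $\kappa_n$ between $\gamma=\mu\times\beta$ and $\gamma_n$, observes that its marginal on the $x$-coordinates is a stagnating sequence of plans between $\mu$ and $\mu_n$, and then uses the $TL^p$ machinery (Proposition~3.12 of \cite{GTS16}) to conclude that $(\gamma_n,u_n-f)\to(\gamma,u-f)$ and $(\gamma_n,y)\to(\gamma,y)$ in $TL^2$, hence their product converges in $TL^1$, which gives the convergence of the cross term to $\iint (u-f)y\,\d\mu\,\d\beta=0$. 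Your route is more elementary: approximate the limit $u$ by a continuous $v$ from a fixed countable dense set, control $\frac{1}{n}\sum(u_n(x_i)-v(x_i))y_i$ by Cauchy--Schwarz and the $TL^2$ convergence of $u_n$, and kill $\frac{1}{n}\sum(v(x_i)-f(x_i))y_i$ by the strong law of large numbers (using independence of $x_i$ and $y_i$ and $\int y\,\d\beta=0$). Your approach avoids introducing transport on the product space and is arguably more transparent, at the cost of the bookkeeping with the countable dense set to secure a single full-probability event; the paper's approach is cleaner once one has the $TL^p$ framework in hand, and shows directly that the entire fidelity term converges (not merely that the cross term vanishes). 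One small note: your aside that ``$\sup_n\|u_n\|_{L^\infty(\mu_n)}<\infty$ may be assumed'' for the $\liminf$ is neither correct nor needed---your Cauchy--Schwarz argument works for any $TL^2$-convergent sequence without an $L^\infty$ bound, so just drop that sentence.
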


We make several observations. Note that while the minimizers of the random functional converge to the minimizers of a deterministic functional, and that the limit of the minimizers does not depend on the amount of noise. In a sense noise does not create a bias.
 The randomness affects the limit in that the actual values of the 
random discrete functionals are higher when there is more noise which leads to the presence of $\Var(\beta)$ in the deterministic limit. We note that while we do not allow for Gaussian noise this is purely for technical reasons, to make the proof of compactness easier. On the other hand we do not require the noise to have continuous density with respect to Lebesgue measure.

\begin{remark}
Let us contrast the result of Corollary \ref{cor:noise} to results on min-max recovery rates in nonparametric regression that we mentioned in the introduction (see \cite{HutRig16,MamvdG97, PSCW18, WSST_trend}). In the setting of regression one is concerned with recovering a function $f^\dagger$ in some functional class (e.g., BV unit  ball) whose noisy samples are available. Thus the fidelity term is made stronger as $n \to \infty$. Namely $\lambda$ in \eqref{eq:GMSfen} is taken to infinity at appropriate rate as $n$ increases. The works  obtain rates at which minimizers of functionals like \eqref{eq:GMSfen}, with TV regularization instead of the Mumford--Shah term, converge to $f^\dagger$. We conjecture that for the functions $f^\dagger$ in BV ball the functional \eqref{eq:GMSfen} also achieves the optimal min-max rate. One difference between the Mumford--Shah and the TV regularization is that the Mumford--Shah one does not decrease the contrast over sharp edges as TV regularization does. See Example \ref{ex:synth}. 
In the context of regression our contribution is that by taking the limit $n \to \infty$ as $\lambda$ is fixed we shed the light on what is the precise amount of regularization introduced by the  Mumford--Shah term at finite $\lambda$. 
\end{remark}

\subsubsection{Extension to data on a manifold.}
In machine learning it is often relevant to consider data that lie in a potentially high dimensional space, but have an intrinsic low dimensional structure. Here we remark that it is straightforward to extend our results to the setting where data are sampled from a measure $\mu$ whose support is a $d$-dimensional $C^2$ manifold, without boundary, $\M$, embedded in $\R^D$ for some $D>d$. We require that the measure $\mu$ has a continuous density $\rho$ with respect to the volume form of the manifold $\M$: $d\mu = \rho \te{dVol}_\M$. 
The form of the graph Mumford--Shah functional remains the same, while the only change in the limiting functional \eqref{eqn: weighted MS with density} is that $\nabla u$ is replaced by the manifold gradient $\grad_{\M} u$. We note that the scaling of $\eta_\e$ and the definition of $\sigma_\eta$ depend on the intrinsic dimension $d$, but not the ambient dimension $D$. Full details of how related statements are extended to the manifold setting  can be found in \cite{GGHS}. 

\subsubsection{Extension to vector valued functions.} We note that in machine learning it is also natural to consider functions on graphs which are vector valued. The graph  Mumford-Shah functional \eqref{eq:GMSfen} can be considered for functions $u$ with values in $\R^m$. In fact such functionals have been used in the work of Hallac,  Leskovec,  and  Boyd \cite{hallac2015}. We do not rigorously treat the limits of vector-valued functionals in this paper. Nevertheless we remark that we expect that the $\Gamma$-limit of the vector valued graph Mumford-Shah functional for $p=2$ is the following Mumford-Shah type functional:  for $u\in SBV(\Omega)^m$
	\begin{equation*}
MS^m_{\eta,\zeta}(u;\rho):= \vartheta_{\eta}(2,q)\zeta'(0) \int_{\Omega} |\nabla u(x)|^2 \rho(x)^2 \d x 
 +\sigma_{\eta}  \Theta \int_{S_u} \rho(y)^2\d \H^{d-1}(y)
		\end{equation*}
where $S_u$ is the union of jump sets for each of coordinate functions, $u=(u_1, \dots, u_m)$ and $S_u = \bigcup_{j=1}^m S_{u_i}$, and where $\Theta$ is defined by \eqref{eq:theta} and $\vartheta_\eta$ and $\sigma_\eta$ by \eqref{eqn: costants definition}. We furthermore expect that one can prove this using similar techniques that we use. We note that this would require a careful verification of the slicing argument in multiple dimensions.

\section{$\Gamma$-convergence and topology in the space of configuration} \label{sec:prelim}
Given a sequence of functionals $F_{n}:X\rightarrow \R$ and a metrizable (and separable) topology $\mathcal{T}$ on $X$ we say that $F_{n}$ $\Gamma$-converges at $F:X\rightarrow \R$ with respect to the topology $\mathcal{T}$  if the following two conditions are satisfied:
	\begin{itemize}
	\item[(i)] For every sequence $\{x_{n}\}_{n \in \N} \subset X$ such that $\displaystyle x_{n} \stackrel{\mathcal{T}}{\longrightarrow} x$ it holds that
		\[
		\displaystyle \liminf_{n\rightarrow \infty} F_{n} (x_{n}) \geq F(x);
		\]
	\item[(ii)] For all $x\in X$ there exists a sequence  $\{x_{n}\}_{n \in \N} \subset X$ such that  $\displaystyle x_{n} \stackrel{\mathcal{T}}{\longrightarrow} x$ and for which 	
	\[
	\displaystyle  \limsup_{n\rightarrow \infty} F_{n}(x_{n} )\leq F(x).
	\]
	\end{itemize}
In this case we write
	\[
	\Glim_{n \rightarrow 0} F_{n}=F.
	\]
Notice that, if $	\displaystyle \Glim_{n \rightarrow \infty} F_{n}=F$ then the following assertions also hold:
	\begin{itemize}
	\item[(i)] $F$ is lower semi-continuous and
		\begin{align*}
		F(x)&=\inf\left\{\liminf_{n \rightarrow \infty} F_{n}(x_{n}) \ \Big{|} \ \ \{x_{n}\}_{n \in \N} \subset X,\   x_{n} \stackrel{\mathcal{T}}{\longrightarrow} x \right\}\\
			&=\inf\left\{\limsup_{n \rightarrow \infty} F_{n}(x_{n}) \ \Big{|} \ \ \{x_{n}\}_{n \in \N} \subset X,\   x_{n} \stackrel{\mathcal{T}}{\longrightarrow} x \right\};
		\end{align*}
		\item[(ii)] If $\{x_{n}\}_{n \in \N}$  is a sequence of minima of $F_{n}$ on $X$, namely
			\[
			F_{n}(x_{n}) = \min_{y\in X} \{ F_{n}(y) \},
			\]
		converging to some $x\in X$ in the topology $\mathcal{T}$ then $x$ is a minimum of $F$ on $X$:
			\[
			F(x)=\min_{y\in X} \{ F(y) \}.
			\]
	\end{itemize}
	
\subsection{The $TL^p$ topology: brief overview.}\label{sbsec: TLP}
The $TL^p$ space has been introduced in \cite{GTS16}
Given a bounded open set $\Omega$ let $\mathcal P(\Omega)$ be the set of Borel probability measures on $\overline \Omega$.
The space $TL^p(\Omega)$ is defined by 
	\begin{equation}\label{eqn: TLp space}
	TL^p(\Omega):= \{ (\mu,f) \: : \: \mu \in \mathcal P(\Omega)  \ f \in L^p(\Omega;\mu) \}.
	\end{equation}
Given $(\mu,f), (\nu,g)\in TL^p(\Omega)$ their $TL^p$ distance is defined as follows
	\[
	d_{TL^p}((\mu,f), (\nu,g)):= 
\begin{cases}
\inf_{\pi\in \Gamma(\mu,\nu)}  \left(\int_{\Omega\times \Omega} |x-y|^p + |f(x)-g(y)|^p  \d \pi(x) \right)^{\frac{1}{p}} \; & \te{if } p \in [1, \infty), \\
\inf_{\pi\in \Gamma(\mu,\nu)} \esssup_{(x,y) \in \te{supp}\; \pi}  |x-y| + |f(x) -g(y)| & \te{if } p = \infty
\end{cases}
	\]
where the infimum is taken among all transport plans $\pi$  between $\mu$ and $\nu$:
		\[
		\Gamma(\mu,\nu):= \{ \gamma \in \mathcal P( \Omega \times \Omega) \::\:  
		(\forall A \text{ Borel}) \; \gamma(A \times \Omega)   = \mu(A), \, \gamma( \Omega \times A)   = \nu(A) \}.
		\]

Given $\mu \in \mathcal{P}(\Omega)$ and a measurable mapping $T: \Omega \rightarrow \Omega$, we recall that $\nu = T_{\#} \mu$ is the \emph{push-forward} of $\mu$ by $T$, namely the measure on $\Omega$ such that for any $A$ Borel $\nu(A) = \mu(T^{-1}(A))$. A consequence of the definition is 
the following change of variables identity
	\begin{equation}\label{eqn: change of variable formula}
	\int_\Omega f(T(x)) \d \mu(x)= \int_{\Omega} f(y) \d \nu (y).
	\end{equation}
Well-known results of the theory of optimal transportation, \cite{brenier87} for $p =2$, \cite{gangbomccann} for $p< \infty$ and \cite{champion}, for $p=\infty$, provide that when $\mu$ is absolutely continuous with respect to $\mathcal{L}^d$ then there exists an optimal  transport map between $\mu$ and $\nu$, namely 
$T: \Omega\rightarrow \Omega$ such that  $T_{\#} \mu=\nu$ and 
\begin{align}
\begin{split}
d_p^p(\mu, \nu) & :=	\inf_{\gamma \in \Gamma(\mu,\nu)} \int_{\Omega\times\Omega} |x-y|^p \d \gamma(x,y)  = \int_{\Omega} |T(x)-x|^p \d \mu(x)  \quad  \te{when } p < \infty, \\
d_\infty(\mu, \nu) & :=\inf_{\gamma \in \Gamma(\mu,\nu)} \esssup_{(x,y) \in \te{supp}(\gamma)}  |x-y|  =  \esssup_{x \in \te{supp}(\mu)}  |T(x)-x|  \quad \te{ when } p = \infty.
\end{split} \label{def:dp}
\end{align}
In particular the transport plan induced by $T$, namely $\pi:=(\text{Id}, T)_{\#} \mu $, is optimal. 
The distance $d_p(\mu, \nu)$ we define in the expressions above is called the $p$-transportation metric (sometimes referred to as the $p$-Wasserstein distance).

When considering convergence of a sequence  $(\mu_n,f_n)$ toward $(\mu,f)$, the following sufficient criterion will be useful. We say that a sequence of transportation maps is \emph{stagnating} if
\begin{equation} \label{eq:stagnating}
 T_{n\#} \mu = \mu_n \quad \quad \text{and} \quad \quad \| I - T_n \|_{L^p(\mu)}^p = \int_\Omega |x-T_n(x)|^p  \, \d \mu(x) \to 0 
\end{equation}
as $n\to \infty$.  To show $TL^p$ convergence it thus suffices to find a stagnating sequence of transportation maps such that $\int |f(x) - f_n(T_n(x))|^p d \mu(x)$ converges to zero as $n \to \infty$. 
\medskip

We now introduce the new results that allow us to  obtain the optimal scaling of $\e_n$ for $\Gamma$-convergence in 2D. Namely while $d_\infty(\mu, \mu_n) \sim \frac{(\ln n)^{3/4}}{\sqrt n}$ when $d=2$ we introduce an auxiliary measure $\tmu_n$ which is absolutely continuous with respect to Lebesgue measure and satisfies both that $d_\infty(\tilde \mu_n, \mu_n) \sim \sqrt{ \frac{(\ln n)}{ n}}$ when $d=2$ and that its density with respect to measure $\mu$ uniformly converges to $1$. These two facts are enough to pass to the limit in the functionals we consider, and many others (like total variation or dirichlet energy). Lemma \ref{lem:improved convergence TLp} is a technical result needed to transfer the $TL^p$ convergence to the desired measures. 

\begin{lemma}\label{lem: improved convergence}
Let $\mu$ be a probability measure with continuous  density $\rho$, supported on $\overline \Omega$, where $\Omega$ is a bounded  open set with Lipschitz boundary in $\R^d$, $d \geq 2$ and which satisfies the assumption \eqref{eqn: lwr bound on rho}. Let $\e_n$ be a sequence of positive numbers converging to zero and satisfying \eqref{eqn: condition on eps}.
Let $\{x_i\}_{i\in \N}$ be a sequence of i.i.d. random points chosen according to the density $\rho$, and let $\mu_n = \frac{1}{n} \sum_{i=1}^n \delta_{x_i}$. Then there exists a sequence of probability  measures $\tmu_n$  which are absolutely continuous with respect to the measure $\mu$ and satisfy
\begin{itemize}
\addtolength{\itemsep}{3pt}
\item[(i)]  Almost surely $\displaystyle{ \ell_n := d_\infty(\mu_n, \tmu_n) \ll \e_n}$ 
\item[(ii)] As $n \to \infty,$ $\, \displaystyle{\frac{d\tmu_n}{d\mu} }$ almost surely as  converges to $1$ uniformly on $\overline \Omega$. 
\end{itemize}
\end{lemma}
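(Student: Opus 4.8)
# Proof Proposal for Lemma \ref{lem: improved convergence}

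\textbf{Overview of the strategy.} The plan is to construct $\tmu_n$ by "binning" the empirical measure $\mu_n$ at an intermediate length scale $h_n$ — following the insight of M\"uller and Penrose — and then smoothing against the target density $\rho$. The intermediate scale $h_n$ should be chosen so that (a) $h_n \to 0$, (b) $h_n \gg \e_n^{-1} \cdot (\text{transport error})$, i.e. the transport cost between $\mu_n$ and the binned measure is much smaller than $\e_n$, and (c) each bin of sidelength $h_n$ contains, with overwhelming probability, a number of sample points that is within a $(1+o(1))$ factor of its expected value $n\mu(\text{bin})$. Concentration of binomial random variables (Bernstein/Chernoff) combined with a union bound over the $O(h_n^{-d})$ bins is what forces the relation between $h_n$, $n$, and the $\log n$ factor, and ultimately produces the scaling condition \eqref{eqn: condition on eps}.

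\textbf{Construction.} Partition (a neighborhood of) $\Omega$ into cubes $\{Q_k\}$ of sidelength $h_n$. On each cube let $N_k = n\mu_n(Q_k) = \#\{i : x_i \in Q_k\}$ and let $m_k = n\mu(Q_k)$ be its expectation. Define $\rho_n$ on $Q_k \cap \Omega$ to be a rescaling of $\rho$: set $\rho_n(x) = \frac{N_k}{m_k}\,\rho(x)$ for $x \in Q_k\cap\Omega$ (with the convention that on cubes where $m_k$ is too small — near $\partial\Omega$ or empty cubes — one merges with a neighboring cube or handles the boundary layer separately, since $\partial\Omega$ is Lipschitz so the boundary layer has measure $O(h_n)$). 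Then $\tmu_n := \rho_n\,\L^d$ is a probability measure (the normalization $\sum_k N_k = n$ guarantees total mass $1$), absolutely continuous with respect to $\mu$, with $\frac{d\tmu_n}{d\mu}(x) = N_k/m_k$ on $Q_k$.

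\textbf{Verifying (ii).} By the binomial concentration estimate, for a cube with $m_k \gtrsim \log n$ one has $|N_k/m_k - 1| \lesssim \sqrt{(\log n)/m_k}$ with probability $1 - n^{-c}$ for large $c$; a union bound over $O(h_n^{-d})$ cubes is summable in $n$ provided $h_n^{-d} \cdot n^{-c} $ is summable, which holds for any polynomially-bounded $h_n^{-1}$ and $c$ large. Since the smallest relevant $m_k$ is $\gtrsim c\, n h_n^d$ (using the lower bound $\rho \ge c$ from \eqref{eqn: lwr bound on rho}), we get $\sup_k |N_k/m_k - 1| \lesssim \sqrt{(\log n)/(n h_n^d)} \to 0$ as long as $n h_n^d / \log n \to \infty$. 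By Borel--Cantelli this holds almost surely. This gives $\frac{d\tmu_n}{d\mu} \rightrightarrows 1$ on $\overline\Omega$.

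\textbf{Verifying (i).} To estimate $\ell_n = d_\infty(\mu_n,\tmu_n)$, exhibit a transport map (or plan): within each cube $Q_k$, transport the restriction of $\tmu_n$ (mass $N_k/n$, spread over $Q_k\cap\Omega$ according to $\rho$) onto the $N_k$ atoms $\{x_i \in Q_k\}$ each of mass $1/n$. Since both source and target live inside $Q_k$ (diameter $\sqrt d\, h_n$), this local transport moves mass by at most $\sqrt d\, h_n$, so $\ell_n \le \sqrt d\, h_n$ (modulo the boundary-cube bookkeeping, where one may need to transport across one extra cube, giving a constant factor). The requirement $\ell_n \ll \e_n$ thus reduces to choosing $h_n \ll \e_n$. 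Combining with the constraint from (ii), $n h_n^d / \log n \to \infty$, we need a scale $h_n$ with $(\log n / n)^{1/d} \ll h_n \ll \e_n$, and such $h_n$ exists precisely under the hypothesis \eqref{eqn: condition on eps}, i.e. $(\log n)^{1/d}/(\e_n n^{1/d}) \to 0$. (For instance $h_n = \e_n (\log n)^{1/d}/(\e_n n^{1/d})^{1/2}$ works, or any geometric mean of the two bounds.)

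\textbf{Main obstacle.} The routine parts are the binomial concentration and the within-cube transport bound. The delicate point is the treatment of the boundary layer: cubes that meet $\partial\Omega$ may have very small or zero $\mu$-mass, so the naive rescaling $N_k/m_k$ is ill-defined or wildly fluctuating there. The fix is to handle the $O(h_n)$-thick boundary collar separately — e.g. by grouping each boundary cube with an adjacent interior cube of comparable mass (using the Lipschitz regularity of $\partial\Omega$ to guarantee such a neighbor exists with $\mu$-mass $\gtrsim n h_n^d$), which costs only an extra factor in the transport distance and does not affect the uniform convergence since the collar still has the required concentration. A careful but standard argument shows this bookkeeping preserves both (i) and (ii); I expect this to be where most of the technical work in the full proof lies.
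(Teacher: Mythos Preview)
Your proposal is correct and follows essentially the same route as the paper: bin at an intermediate scale $h_n$ with $(\log n/n)^{1/d}\ll h_n\ll \e_n$, use Bernstein plus a union bound and Borel--Cantelli for the uniform density ratio, and transport within cubes for the $d_\infty$ bound. Two minor differences are worth noting: the paper takes $\tmu_n$ to be piecewise \emph{constant} (density $\mu_n(K_j)/|K_j|$ on each cube) rather than your piecewise rescaling of $\rho$, which forces an extra modulus-of-continuity term in its estimate for $d\tmu_n/d\mu$; your choice $\rho_n=(N_k/m_k)\rho$ is in fact slightly cleaner here. For the Lipschitz boundary, the paper does not carry out the cube-merging you sketch but instead invokes the partition construction of Garc\'ia Trillos--Slep\v{c}ev and the bi-Lipschitz smoothing of Ball--Zarnescu to reduce to the cube case; either approach works and yours correctly identifies this as the only genuinely technical step.
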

\begin{proof}
Let us first consider the case that $\Omega = [0,1]^d$. Given a sequence $\e_n$ satisfying the assumptions of the lemma let $\{b_n\}_{n\in \N}$ and $\{c_n\}_{n \in \N}$ be increasing sequences of positive numbers such that $\left(b_n \, \frac{\ln n}{n} \right)^{-1/d}$ is integer and
\begin{equation}
b_n \to \infty, \quad
\e_n \gg \left(b_n \, \frac{\ln n}{n} \right)^{1/d}  \te{ and } 
\; c_n^2 \gg  b_n  \gg c_n \; \quad \te{ as } n \to \infty.
\end{equation}
Let, for $n \geq 2$, 
\[ \delta^d= b_n \, \frac{\ln n}{n} \; \te{ and }\;  t = c_n \, \frac{\ln n}{n} \]
We divide $[0,1]^d$ into $m =\left(b_n \, \frac{\ln n}{n} \right)^{-1}$ disjoint cubes $K_j$, $j=1, \dots, m$ with side length $\delta$. Note that the probability $p_j$ that a point $x_i$ is in the box $K_j$ is equal to $\mu(K_j)$ and that 
\[ c \delta^d \leq p_j \leq C \delta^d. \]
Bernstein's inequality \cite{bern24} gives
\begin{align} \label{ineq:Ber}
\begin{split}
P\left( |\mu_n(K_j) - p_j|\geq t  \right) & < 2 \exp \left(- \frac{\frac12 n t^2}{p_j (1-p_j) + \frac13 t} \right) < 2 \exp\left(- \frac{nt^2}{3 p_j}  \right)  \\
& \leq 2 \exp\left(- \frac{c_n^2 \ln n}{3C b_n}  \right)  = 2 n^{- \frac{c_n^2}{3C b_n}}. 
\end{split}
\end{align}
It follows, by union bound, that the probability that in all boxes $ |\mu_n(K_j) - p_j| <  t$ satisfies 
\begin{equation} \label{eq:union}
 P(\{ (\forall j =1, \dots, m) \;\,  |\mu_n(K_j) - p_j|< t) \geq 1 - m    2 n^{- \frac{c_n^2}{3C b_n}} \geq 1- n^{-2},
\end{equation}
for all $n$ large enough. By Borel--Cantelli Lemma we conclude that almost surely for $n$ sufficiently large for all boxes it holds that  $ |\mu_n(K_j) - p_j|< t$. 

Define the measure $\tilde \mu_n$ as follows:
\[ d \tilde \mu_n = \sum_{j=1}^m \ca_{K_j} \frac{\mu_n(K_j) }{\delta^d} \, dx \]
Since $\tilde \mu_n(K_j) = \mu_n(K_j)$, the distance $d_\infty(\tilde \mu_n , \mu_n)$ is at most the diameter of the boxes, namely
\[ d_\infty(\tilde \mu_n , \mu_n) \leq \sqrt{d}\,  \delta. \]

For large $n$ and arbitrary $x \in \Omega$ let $K_j$ be such that $x \in K_j$. Using \eqref{ineq:Ber} we obtain
\begin{align*}
\left| \frac{d \tilde \mu_n}{d\mu}(x) -1 \right|& \leq \left|  \frac{\frac{\mu_n(K_j)}{\delta^d} - \rho(x)}{\rho(x) }\right| \\
& \leq   \frac{|\mu_n(K_j) - \mu(K_j)|}{\delta^d \, \rho(x)} + \frac{ \int_{K_j} |\rho(z) - \rho(x)| dz }{\delta^d \, \rho(x)} \\
& \leq  \frac{t}{c \delta_d} + \frac{1}{c}  \omega(\sqrt{d}\, \delta) \leq \frac{c_n}{c b_n} + \frac{1}{c}  \omega(\sqrt{d}\, \delta),
\end{align*}
where $\omega$ is the modulus of continuity of $\rho$. The uniform convergence follows since the terms on the right-hand side converge to zero. 
\medskip

Extending the argument to general $\Omega$ with smooth boundary is straightforward using  the partition procedure detailed in Section 3 of \cite{GTS15a}. To general case of  $\Omega$ with Lipschitz boundary can be reduced to are domains with smooth boundary using the result of Ball and Zarnescu \cite{ball2017partial}, as was done is the Step 2 of the proof of Theorem 1.2 in \cite{GTS15a}. 
\end{proof}

\begin{lemma} \label{lem:improved convergence TLp}
Let $\mu$ be a probability measure with density $\rho$ supported on a compact set in $\R^d$. Let $\tmu_n$ be a sequence of probability measures which are absolutely continuous with respect to $\mu$  such that 
\[ \frac{d \tmu_n}{d \mu} \to 1 \quad \te{ uniformly on the support of } \mu.  \]
Assume $f_n \to f$ in $L^p(\mu)$ as $n \to \infty$. Then
\begin{equation} \label{eq:mod_rho_conv}
 (\tmu_n , f_n) \to (\mu,f) \quad \te{ in } TL^p \te{ as } n \to \infty.
\end{equation}
\end{lemma}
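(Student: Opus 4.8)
The plan is to exhibit an explicit transport plan between $\tmu_n$ and $\mu$ whose cost is controlled by the uniform convergence of the densities, and then compare $f_n$ pulled back to $\mu$ with $f$. Write $g_n := \frac{d\tmu_n}{d\mu}$, so $g_n \to 1$ uniformly on $\operatorname{supp}\mu$ and in particular $g_n$ is, for $n$ large, uniformly bounded above and below away from $0$. First I would observe that since $\mu$ and $\tmu_n$ are mutually absolutely continuous, one can couple them using a plan $\pi_n$ that is "as close to the identity as possible": decompose $\mu = \mu \wedge \tmu_n + (\mu - \mu\wedge\tmu_n)$, put the common mass $\mu\wedge\tmu_n$ on the diagonal, and transport the remaining mass $\tfrac12\|\mu - \tmu_n\|_{TV}$ arbitrarily (the support is compact, so the spatial cost of that remaining mass is bounded by $\operatorname{diam}(\operatorname{supp}\mu)^p$ times its mass). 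Since $\|\mu - \tmu_n\|_{TV} = \tfrac12\int |g_n - 1|\,d\mu \le \tfrac12\|g_n-1\|_\infty \to 0$, the off-diagonal mass vanishes, so the spatial part $\int |x-y|^p\,d\pi_n(x,y)$ of the $TL^p$ cost tends to $0$.

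Next I would handle the function part. Using the same plan $\pi_n$, the contribution is $\int |f_n(y) - f(x)|^p\,d\pi_n(x,y)$. On the diagonal portion this is $\int |f_n(x) - f(x)|^p\,d(\mu\wedge\tmu_n)(x) \le \int_\Omega |f_n - f|^p\,d\mu \to 0$ by hypothesis. Wait — this is not quite right since $f_n \in L^p(\tmu_n)$ and $f\in L^p(\mu)$, and on the diagonal we evaluate both at the same point against the measure $\mu\wedge\tmu_n \le \mu$; since $g_n$ is bounded above, $\mu\wedge\tmu_n \le \mu$ and also $\le \tmu_n$, so both integrals make sense and the bound $\int|f_n-f|^p\,d\mu$ is legitimate. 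For the off-diagonal portion, $|f_n(y)-f(x)|^p \le 2^{p-1}(|f_n(y)|^p + |f(x)|^p)$; the $|f(x)|^p$ term integrates to at most $\|f\|_{L^p(\mu)}^p$ times the vanishing off-diagonal mass, and the $|f_n(y)|^p$ term integrates to at most $\int |f_n|^p\,d\tmu_n = \int |f_n|^p g_n\,d\mu \le \|g_n\|_\infty \int |f_n|^p\,d\mu$, which is bounded (since $f_n\to f$ in $L^p(\mu)$) while it is weighted by the vanishing off-diagonal mass — hence this too tends to $0$. Altogether $d_{TL^p}((\tmu_n,f_n),(\mu,f))^p \le \int|x-y|^p + |f_n(y)-f(x)|^p\,d\pi_n \to 0$.

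The one genuinely delicate point is the measurability/integrability bookkeeping around the off-diagonal transport: one must check that $f_n$ is $\tmu_n$-integrable to the $p$th power uniformly in $n$ (which follows from $\|g_n\|_\infty$ bounded and $f_n\to f$ in $L^p(\mu)$), and that the crude coupling of the "excess" mass can indeed be realized as an element of $\Gamma(\mu,\tmu_n)$ — this is standard, since $\mu - \mu\wedge\tmu_n$ and $\tmu_n - \mu\wedge\tmu_n$ have equal total mass and any product-type coupling of them works on a compact support. Everything else is the routine $2^{p-1}$-convexity estimate and the hypothesis $f_n\to f$ in $L^p(\mu)$. I do not expect any real obstacle; the lemma is essentially the statement that replacing $\mu$ by a measure with uniformly-near-$1$ density is a $TL^p$-continuous operation, and the short proof is exactly the explicit-coupling argument above.
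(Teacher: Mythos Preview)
Your argument is correct in spirit and takes a genuinely different route from the paper. The paper does \emph{not} build an explicit coupling: it argues abstractly that $d_p(\mu,\tmu_n)\to 0$ (via weak convergence on a compact set), picks any sequence of near-optimal plans $\pi_n$, and then handles $\int |f(y)-f(x)|^p\,d\pi_n$ by a Lipschitz approximation $g_m$ of $f$ in $L^p(\mu)$, splitting $f(y)-f(x)$ through $g_m(y)$ and $g_m(x)$. Your min-coupling (diagonal on $\mu\wedge\tmu_n$, arbitrary on the rest) is more elementary and avoids the Lipschitz approximation entirely, because on the diagonal $f(y)-f(x)=0$ exactly. This is a cleaner idea for the present setting; the paper's version has the mild advantage that it would work verbatim for any stagnating sequence of plans, not just this particular one.

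One point in your write-up needs tightening. For the off-diagonal function term you write that $\int |f(x)|^p$ over the off-diagonal is ``at most $\|f\|_{L^p(\mu)}^p$ times the vanishing off-diagonal mass'', and similarly that the $|f_n(y)|^p$ term is ``bounded \dots\ while it is weighted by the vanishing off-diagonal mass''. As stated this is not a valid estimate: small total mass does not make $\int |f|^p\,d\nu$ small when $f$ is unbounded. What actually saves you is the \emph{uniform} density bound: the $x$-marginal of the off-diagonal piece is $(1-g_n)_+\,d\mu$ and the $y$-marginal is $(g_n-1)_+\,d\mu$, so
\[
\int_{\text{off-diag}} |f(x)|^p\,d\pi_n \le \|g_n-1\|_\infty\,\|f\|_{L^p(\mu)}^p,\qquad
\int_{\text{off-diag}} |f_n(y)|^p\,d\pi_n \le \|g_n-1\|_\infty\,\|f_n\|_{L^p(\mu)}^p,
\]
both of which tend to $0$. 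With this correction your proof is complete.
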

\begin{proof}
From the assumption that $ \frac{d \tmu_n}{d \mu} $ uniformly converges to $1$ follows that the L\'evy--Prokhorov metric between $\tmu_n$ and $\mu$ converges to zero. Since the Levy-Prokhorov and the $p$-transportation metric, $d_p$, defined by \eqref{def:dp}, both metrize the weak convergence of measures on compact sets we conclude that  $d_p(\mu, \tmu_n) \to 0$ as $n \to \infty$. 
Thus there exists a sequence of transportation plans $\pi_n \in \Pi(\mu, \tmu_n)$ such that 
\[ \int |x-y|^p d\pi_n(x,y) \to 0 \qquad \te{as } n \to \infty. \]
Since Lipschitz continuous functions are dense in $L^p(\mu)$ there exists a sequence of Lipschitz continuous functions $g_m$ which converges to $f$ in $L^p(\mu)$. Let $\rho_n$ be the Lebesgue density of $\tmu_n$. Since $ \frac{\rho_n}{\rho}$ uniformly converges to $1$, there exists $n_1$ such that for all $n \geq n_1$, $\, \frac12 \leq  \frac{\rho_n}{\rho} \leq 2$. For $n \geq n_1$
\[  \int |f_n(y) - f(x)|^p d \pi_n(x,y)  \leq 2^p \left( \int |f_n(y) - f(y)|^p d \tmu_n(y) + \int |f(y) - f(x)|^p d \pi_n(x,y) \right).  \]
We estimate the terms separately:
\[ \int |f_n(y) - f(y)|^p d \tmu_n(y) \leq 2 \int |f_n(y) - f(y)|^p d \mu(y), \]
which converges to zero as $n \to \infty$. 
\begin{align*}
\! \int \! |f(y) - f(x)|^p d \pi_n(x,y) & \lesssim \!\! \int \! |f(y) - g_m(y) |^p + |g_m(y) - g_m(x)|^p + |g_m(x) - f(x)|^p d \pi_n(x,y) \\
 & \lesssim \| f - g_m\|_{L^p(\mu)}^p + \int  \Lip(g_m) |x-y|^p  \pi_n(x,y).
\end{align*}
We observe that the right hand can be made arbitrarily small by taking $m$ large enough and then taking $n$ sufficiently large. Consequently $ \int |f_n(y) - f(x)|^p d \pi_n(x,y)$ converges to zero as $n \to \infty$, which implies that \eqref{eq:mod_rho_conv} holds. 
\end{proof}

\section{Proof of the $\Gamma$-convergence (Theorem \ref{thm main: G conv}) } \label{sec:gamma}

We prove Theorem \ref{thm main: G conv} by separately proving the $\Gamma$-liminf bound and 
by  building a recovery sequence.  The proof of $\Gamma$-liminf bound relies on slicing (i.e. one dimensional decomposition) as used by Gobbino \cite{gobbino1998finite} and the techniques of \cite{GTS16} to deal with randomness of the sample. However since the spatial coordinates appear also in a denominator within the functional \eqref{eqn: graph non-local MS with the p}, the way 
\cite{GTS16} deals with space is not precise enough and new ideas are needed to overcome this challenge. These are discussed in Subsection \ref{sec:liminf}. In the subsection below we introduce notation and present the one-dimensional slicing of the continuum Mumford--Shah functional. 

\subsection{One dimensional slicing} \label{sec:slice}
 We will argue on the one dimensional slices in order to achieve the desired bounds. We repeatedly use the following computation, which we sketch for a generic function $f$.

Let $f:\Omega \times \Omega \rightarrow \R$ be a integrable function. Then, for $\e>0$ it holds
\begin{equation}\label{eqn: cov}
\begin{split}
\int_{\Omega \times \Omega} f(x,y) \d x \d y&=	\e^d\int_{\Omega} \d x\int_{\frac{\Omega - x}{\e}} f(x,x+\e \xi)  \d \xi\\
&=\e^d\int_{\R^d} \d \xi \int_{\Omega\cap (\Omega -\e \xi) } f(x,x+\e \xi) \d x
\end{split}
\end{equation}
where we exploited the identity
	\[
	\ca_{\Omega}(x)\ca_{(\Omega - x)/\e}(\xi)=\ca_{\R^d}(\xi) \ca_{\Omega -\e \xi}(x)
	\]
and then Fubini's Theorem (here $\ca_E(x)$ stands for the characteristic function of the set $E$ and takes value $1$ for $x\in E$ and $0$ otherwise).
Given $A\subseteq \R^d$ and  $\xi \in \R^d$  we define, for and for $z\in \xi^{\perp}$, the one dimensional slice 
\begin{equation} \label{eq:slice}
	[A]_z:=\{t\in \R \ : \ z+t\xi/|\xi|\in A\}.
\end{equation}
Above we are omitting the dependence on $\xi$, since it will be  clear from the context. 
We proceed to consider one dimensional slices as follows
\begin{equation}\label{eqn: slicing process}
\begin{split}
\int_{\Omega \times \Omega} f(x,y) \d x \d y&=\e^d\int_{\R^d} \d \xi \int_{\Omega\cap (\Omega -\e \xi) } f(x,x+\e \xi) \d x\\
&=\e^d\int_{\R^d} \d \xi \int_{\xi^{\perp}} \d z \int_{[\Omega\cap (\Omega -\e \xi)]_z} f\left(z+t\frac{\xi}{|\xi|},z+(t+\e|\xi|)\frac{\xi}{|\xi|} \right) \d t.
\end{split}
\end{equation}

When using  one dimensional decompositions we will make several uses of the following notation. Given $g:\R^d\rightarrow \R$, $\xi\in \R^d$ and $z\in \xi^{\perp}$ we define
\begin{equation} \label{eq:subxi}
	g_{\xi}(t;z):=g\left(z+t\frac{\xi}{|\xi|}\right).
\end{equation}

We now state and prove two technical lemmas that we use in the sequel.
\begin{lemma}\label{lem: reconstruction of u}
Let $u\in SBV(\Omega)^p$. Then
	\begin{align}
	MS_{\eta,\zeta}(u;\rho)=\zeta'(0) \int_{\R^d} |\xi|^{p-q}&\eta(|\xi|)\d \xi\int_{\xi^{\perp}} \d z \int_{[\Omega]_z} \left| u_{\xi}'(t;z)\right|^p \rho_{\xi}(t;z)^2\d t \nonumber\\
	&+ \Theta\int_{\R^d} |\xi|\eta(|\xi|)\d \xi\int_{\xi^{\perp}} \d z \int_{S_{u_{\xi}(\cdot;z)}}  \rho_{\xi}(t;z)^2\d \H^0(t)\label{eqn: MS sliced}\\
	= \zeta'(0) \int_{\R^d} |\xi|^{p-q}&\eta(|\xi|)\d \xi\int_{\Omega} \left| \nabla u(x)\cdot \frac{\xi}{|\xi|}\right|^p \rho(x)^2\d x \nonumber\\
	&+ \Theta\int_{\R^d} |\xi|\eta(|\xi|)\d \xi \int_{S_{u}} \left|N_u(y) \cdot \frac{\xi}{|\xi|} \right|\rho\left(y\right)^2\d \H^{d-1}(y).\label{eqn: MS almost sliced}
	\end{align}
where $N_u(y)$ is any vector field normal to $S_u$.
\end{lemma}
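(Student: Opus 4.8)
The plan is to verify the two identities in Lemma \ref{lem: reconstruction of u} by reducing everything to the classical integral-geometric (slicing/Fubini) formulas for the Sobolev part of the gradient and for the jump set of an $SBV^p$ function, applied to the explicit expressions \eqref{eqn: weighted MS with density} and \eqref{eqn: costants definition}. The two displayed equalities are of different character: passing from \eqref{eqn: weighted MS with density} to \eqref{eqn: MS almost sliced} is an exercise in evaluating the constants $\vartheta_\eta(p,q)$ and $\sigma_\eta$ by integrating over directions $\xi$, while passing from \eqref{eqn: MS almost sliced} to \eqref{eqn: MS sliced} is the genuine slicing step.

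\emph{Step 1 (slicing, second equality).} First I would fix a direction $e = \xi/|\xi| \in S^{d-1}$ and recall the standard slicing facts for $SBV$: for $u \in SBV^p(\Omega)$ and a.e. $z \in e^\perp$, the slice $u_\xi(\cdot;z)$ belongs to $SBV^p([\Omega]_z)$, with $u_\xi'(t;z) = \nabla u(z+te)\cdot e$ for a.e.\ $t$, and with jump set $S_{u_\xi(\cdot;z)} = \{ t : z+te \in S_u\}$. Integrating these pointwise identities in $z$ over $e^\perp$ and invoking Fubini (for the absolutely continuous part) and the coarea/projection formula for rectifiable sets (for the jump part) yields, for each fixed $e$,
\begin{align*}
\int_{e^\perp}\! \d z \int_{[\Omega]_z} |u_\xi'(t;z)|^p \rho_\xi(t;z)^2 \d t &= \int_\Omega \big|\nabla u(x)\cdot e\big|^p \rho(x)^2\, \d x,\\
\int_{e^\perp}\! \d z \int_{S_{u_\xi(\cdot;z)}} \rho_\xi(t;z)^2\, \d \H^0(t) &= \int_{S_u} \big|N_u(y)\cdot e\big|\, \rho(y)^2\, \d \H^{d-1}(y).
\end{align*}
Substituting $e = \xi/|\xi|$ and multiplying by $|\xi|^{p-q}\eta(|\xi|)$, resp.\ $|\xi|\eta(|\xi|)$, and integrating in $\xi$ over $\R^d$, gives exactly the equality between \eqref{eqn: MS sliced} and \eqref{eqn: MS almost sliced}. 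The factor $|\xi|^{-p}$ hidden in $(\xi/|\xi|)$ versus $\xi$ is what accounts for the exponent $p-q$ rather than $\ldots$; I would be careful to track it.

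\emph{Step 2 (constants, first equality).} Next I would compute the two $\xi$-integrals in polar coordinates $\xi = t e$, $t>0$, $e \in S^{d-1}$, using $\d\xi = t^{d-1}\,\d t\, \d\H^{d-1}(e)$. For the bulk term:
\[
\int_{\R^d} |\xi|^{p-q}\eta(|\xi|)\, \big|\nabla u(x)\cdot \tfrac{\xi}{|\xi|}\big|^p\, \d\xi = \left(\int_0^\infty t^{p-q+d-1}\eta(t)\,\d t\right)\int_{S^{d-1}} |\nabla u(x)\cdot e|^p\, \d\H^{d-1}(e),
\]
and the spherical integral $\int_{S^{d-1}}|v\cdot e|^p \d\H^{d-1}(e) = |v|^p \int_{S^{d-1}}|e_1\cdot e|^p\d\H^{d-1}(e)$, whose value is the standard Beta-function expression $2\omega_{d-1}\Gamma(\tfrac{p+1}{2})\Gamma(\tfrac{d+1}{2})/\Gamma(\tfrac{p+d}{2})$ divided appropriately; matching with \eqref{eqn: costants definition} gives precisely $\vartheta_\eta(p,q)$. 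For the jump term, $\int_{\R^d}|\xi|\eta(|\xi|)|N_u(y)\cdot\tfrac{\xi}{|\xi|}|\,\d\xi = \big(\int_0^\infty t^d\eta(t)\d t\big)\int_{S^{d-1}}|e_1\cdot e|\,\d\H^{d-1}(e)$, and the spherical average of $|e_1\cdot e|$ equals $2\omega_{d-1}$, reproducing $\sigma_\eta$. Collecting these constants in front of the bulk and jump integrals recovers \eqref{eqn: weighted MS with density}, i.e.\ $MS_{\eta,\zeta}(u;\rho)$, which closes the chain.

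\emph{Main obstacle.} The analytically delicate point is Step 1, specifically the slicing of the jump part: one must justify that for $u \in SBV^p$ the relation $S_{u_\xi(\cdot;z)} = (S_u)_z$ holds for $\H^{d-1}$-a.e.\ slice, that $\H^0$-counting on slices integrates back to the weighted $\H^{d-1}$-measure with the cosine factor $|N_u\cdot e|$, and that the weight $\rho^2$ (continuous, bounded) passes through the slicing unchanged — all of which are standard consequences of the structure theory of $SBV$ and rectifiability (see \cite{ambrosio2000functions}), but require citing the correct slicing theorems rather than re-deriving them. By contrast Step 2 is a routine, if slightly tedious, spherical-integral computation. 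I would therefore present Step 1 by quoting the $SBV$ slicing results and Step 2 as a direct calculation.
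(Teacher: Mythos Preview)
Your proposal is correct and follows essentially the same approach as the paper's proof: first pass from the sliced expression \eqref{eqn: MS sliced} to \eqref{eqn: MS almost sliced} via the standard $SBV$ slicing identities (Fubini for the absolutely continuous gradient, the coarea/projection formula with the cosine factor $|N_u\cdot e|$ for the jump set), and then recover $MS_{\eta,\zeta}(u;\rho)$ by evaluating the spherical integrals in polar coordinates to produce exactly the constants $\vartheta_\eta(p,q)$ and $\sigma_\eta$ of \eqref{eqn: costants definition}. The paper carries out precisely these two steps in the same order, citing \cite{ambrosio1989compactness} for the slicing of the jump set and writing out the same Beta/Gamma computation for the spherical moments.
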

\begin{proof}
We can rewrite the right-hand side of \eqref{eqn: MS sliced} as follows
	\begin{align*}
	 \zeta'(0) \int_{\R^d} |\xi|^{p-q}&\eta(|\xi|)\d \xi\int_{\xi^{\perp}} \d z \int_{[\Omega]_z} \left| u_{\xi}'(t;z)\right|^p \rho_{\xi}(t;z)^2\d t \\
	&+ \Theta\int_{\R^d} |\xi|\eta(|\xi|)\d \xi\int_{\xi^{\perp}} \d z \int_{S_{u_{\xi}(\cdot;z)}}  \rho_{\xi}(t;z)^2\d \H^0(t)\\
	= \zeta'(0) \int_{\R^d} |\xi|^{p-q}&\eta(|\xi|)\d \xi\int_{\xi^{\perp}} \d z \int_{[\Omega]_z} \left| \nabla u\left(z+t\frac{\xi}{|\xi|}\right)\cdot \frac{\xi}{|\xi|}\right|^p\rho\left(z+t\frac{\xi}{|\xi|}\right)^2\d t \\
	&+ \Theta\int_{\R^d} |\xi|\eta(|\xi|)\d \xi\int_{\xi^{\perp}} \d z \int_{S_{u_{\xi}(\cdot;z)}} \rho\left(z+t\frac{\xi}{|\xi|}\right)^2\d \H^0(t)	\\
	= \zeta'(0) \int_{\R^d} |\xi|^{p-q}&\eta(|\xi|)\d \xi\int_{\Omega} \left| \nabla u(x)\cdot \frac{\xi}{|\xi|}\right|^p \rho(x)^2\d x \\
	&+ \Theta\int_{\R^d} |\xi|\eta(|\xi|)\d \xi \int_{S_{u}} \left|N_u(y) \cdot \frac{\xi}{|\xi|} \right|\rho\left(y\right)^2\d \H^{d-1}(y).
	\end{align*}
The last equality above  follows from the Coarea formula, using a well known relation between the one-dimensional slices of an SBV function and the total length of its jump set (see for instance \cite{ambrosio1989compactness}). Notice that
\begin{align*}
\int_{\R^d} |\xi|  \eta(|\xi|) \left|N_u(y)\cdot \frac{\xi}{|\xi|} \right| \d \xi&=\int_{0}^{\infty} t \d t \int_{\partial B_t} \eta(t) \left|N_u(y) \cdot \frac{\xi}{t} \right| \d \H^{d-1}(\xi)\\
&=\int_{0}^{\infty}t^{d}  \eta(t) \d t \int_{\partial B_1}  \left|N_u(y) \cdot v  \right| \d \H^{d-1}(v)\\
&=2 \omega_{d-1}\int_{0}^{\infty}t^{d} \eta(t) \d t =\sigma_{\eta}
\end{align*}
%
where we have exploited the relation
\begin{align*}
\int_{\partial B_1}  \left|N_u(y) \cdot v  \right| \d \H^{d-1}(v) =2 \omega_{d-1} .
\end{align*}
Moreover
\begin{align*}
\int_{\R^d} |\xi|^{p-q} & \eta(|\xi|) \left|\nabla u(x) \cdot \frac{\xi}{|\xi|}\right|^p\d \xi\\
&= \int_0^{\infty} t^{p-q+d-1}\eta(t)\d t \int_{\partial B_1} |\nabla u(x)\cdot v|^p\d \H^{d-1}(v)\\
&=2|\nabla u(x)|^p \omega_{d-1}  \frac{\Gamma(p/2+1/2)\Gamma(d/2+1/2)}{\Gamma(p/2+d/2)} \int_0^{\infty} t^{p-q+d-1}\eta(t)\d t\\
&= |\nabla u(x)|^p \vartheta_{\eta}(p,q)
\end{align*}
where we made use of
\begin{align*}
\int_{\partial B_1}|w\cdot v|^p\d \H^{d-1}(v)&=2|w|^p \omega_{d-1}  \frac{\Gamma(p/2+1/2)\Gamma(d/2+1/2)}{\Gamma(p/2+d/2)} .
\end{align*}
In particular
\begin{align*}
	 \zeta'(0) \int_{\R^d} |\xi|^{p-q}&\eta(|\xi|)\d \xi\int_{\xi^{\perp}} \d z \int_{[\Omega]_z} \left| u_{\xi}'(t;z)\right|^p \rho_{\xi}(t;z)^2\d x \\
	&+ \Theta\int_{\R^d} |\xi|\eta(|\xi|)\d \xi\int_{\xi^{\perp}} \d z \int_{S_{u_{\xi}(\cdot;z)}}  \rho_{\xi}(t;z)^2\d \H^0(t)\\
	= \zeta'(0) \int_{\R^d} |\xi|^{p-q}&\eta(|\xi|)\d \xi\int_{\Omega} \left| \nabla u(x)\cdot \frac{\xi}{|\xi|}\right|^p \rho(x)^2\d x\\
	&+ \Theta\int_{\R^d} |\xi|\eta(|\xi|)\d \xi \int_{S_{u}} \left|N_u(y) \cdot \frac{\xi}{|\xi|} \right|\rho\left(y\right)^2\d \H^{d-1}(y)\\
		=  \vartheta_{\eta}(p,q)\zeta'(0)& \int_{\Omega} | \nabla u(x)|^p \rho(x)^2\d x\\
	&+ \sigma_{\eta}\Theta \int_{S_{u}} \rho\left(y\right)^2\d \H^{d-1}(y)=MS_{\eta,\zeta}(u;\rho).
	\end{align*}
\end{proof}

\subsection{Auxiliary functionals} \label{sec:aux}
We introduce two auxiliary functionals: $\overline{\mathcal{GMS}}$ and $\mathcal{GAMS}$. The first one is motivated by the calculation below and allows us to switch the auxiliary measure $\tmu_n$, constructed in Lemma \ref{lem: improved convergence} by the measure $\mu$. The functionals $\mathcal{GAMS}$ moves a step further towards to local, limiting,  functional by replacing the integral over the product measure by one with weight $\rho(x)^2$. 
We will first establish the $\liminf$ and $\limsup$ bounds on an auxiliary energy $\mathcal{GAMS}$, and then, by exploiting Lemma \ref{lem I can kill the remainder} apply these bounds  to  $\overline{\mathcal{GMS}}$. Thanks to \eqref{eqn:key equality} we can then prove the statement of Theorem \ref{thm main: G conv} for $\mathcal{GMS}$.

Consider the setting of Theorem \ref{thm main: G conv} and let $\tmu_n$ be the measures constructed in Lemma \ref{lem: improved convergence}. 
Let $T_n: \Omega \rightarrow \Omega$ be the $d_\infty$ optimal transport  from $\tmu_n$ to $\mu_n$. Let $\ell_n = d_\infty(\tmu_n, \mu_n) = \|T - Id\|_{L^\infty}$. By exploiting the change of variable \eqref{eqn: change of variable formula}, we can rewrite the Mumford--Shah functional on the point clouds in the following integral form:
	\begin{align*}
	\mathcal{GMS}_{\e,n}&(u):=\frac{1}{\e} \frac{1}{n^2}\sum_{i,j=1}^n \zeta\left( \e^{1-p+q} \frac{|u(x_i) - u(x_j)|^p}{|x_i-x_j|^q}\right) \eta_{\e}(|x_i - x_j|)\\
	&=\frac{1}{\e} \int_{\Omega \times \Omega} \zeta\left( \e^{1-p+q} \frac{|u(x) - u(y)|^p}{|x-y|^q}\right) \eta_{\e}(|x- y|) \d \mu_n(x) \d \mu_n(y)\\
	&=\frac{1}{\e} \int_{\Omega \times \Omega} \zeta\left( \e^{1-p+q}\frac{|u(x) - u(y)|^p}{|x-y|^q}\right) \eta_{\e}(|x- y|) \d (T_{n\#} \tmu_n )(x) \d (T_{n\#} \tmu_n ) (y)\\
	&=\frac{1}{\e} \int_{\Omega} \int_{\Omega} \zeta\left( \e^{1-p+q} \frac{|u(T_n(x) ) - u(T_n(y) )|^p}{|T_n(x)-T_n(y)|^q}\right) \eta_{\e}(|T_n(x)- T_n(y)|) \d \tmu_n (x) \d\tmu_n (y) \\
& \leq \sup_{x \in \Omega} \left| \frac{\rho_n}{\rho} \right|^2 
\frac{1}{\e} \int_{\Omega} \int_{\Omega} \zeta\left( \e^{1-p+q} \frac{|u(T_n(x) ) - u(T_n(y) )|^p}{|T_n(x)-T_n(y)|^q}\right) \eta_{\e}(|T_n(x)- T_n(y)|) \d \rho (x) \d \rho (y).
\nc
	\end{align*}
Analogously
\begin{multline*}
 \mathcal{GMS}_{\e,n}(u) 
 \\ \geq 
\inf_{x \in \Omega} \left| \frac{\rho_n}{\rho} \right|^2 
\frac{1}{\e} \int_{\Omega} \int_{\Omega} \zeta\left( \e^{1-p+q} \frac{|u(T_n(x) ) - u(T_n(y) )|^p}{|T_n(x)-T_n(y)|^q}\right) \eta_{\e}(|T_n(x)- T_n(y)|) \d \rho (x) \d \rho (y). 
\end{multline*}

In the light of the above computation, we consider three different functionals, which, as we show,  share the same $\Gamma$-limit. We consider
	\begin{align*}
	\mathcal{GMS}_{\e,n}(u)&:=\frac{1}{\e} \frac{1}{n^2}\sum_{i,j=1}^n \zeta\left( \e^{1-p+q} \frac{|u(x_i) - u(x_j)|^p}{|x_i-x_j|^q}\right) \eta_{\e}(|x_i - x_j|)\\
		\overline{\mathcal{GMS}}_{\e,n}(u)&:=  \frac{1}{\e} \int_{\Omega} \int_{\Omega} \zeta\left(  \frac{|u(T_n(x) ) - u(T_n(y) )|^p}{\e^{p-q-1} |T_n(x)-T_n(y)|^q}\right) \eta_{\e}(|T_n(x)- T_n(y)|) \d \rho (x) \d \rho (y)\\
	\mathcal{GAMS}_{\e,n}(u)&:= \frac{1}{\e} \int_{\Omega} \int_{\Omega}  \zeta\left(\frac{|u(T_n(x))-u(T_n(y))|^p}{\e^{p-q-1}|T_n(x)-T_n(y)|^q}\right) \eta_{\e}(|T_n(x)-T_n(y)|)\rho(x)^2\d x\d y.
	\end{align*}
 Notice that since $\frac{\rho_n}{\rho}$ uniformly converges to $1$ (by Lemma \ref{lem: improved convergence})
	\begin{equation}\label{eqn:key equality}
	\Glim_{n\rightarrow {+\infty}} \mathcal{GMS}_{\e_n,n}=\Glim_{n\rightarrow {+\infty}} \overline{\mathcal{GMS}}_{\e_n,n}.
	\end{equation}
\nc

The next Lemma shows that, in case of a compactly supported kernel $\eta$, the auxiliar energy $\mathcal{GAMS}$ is asymptotically equivalent to $\overline{\mathcal{GMS}}$. 
\begin{lemma}\label{lem I can kill the remainder}
Let $\{\e_n\}_{n\in \N}$ be any sequence satisfying \eqref{eqn: condition on eps}. Le $\{u_n \}_{n\in \N}$ be a sequence in $L^1(\Omega;\mu_n)$ and assume that the kernel $\eta$ is compactly supported.
Then
\begin{align*}
	 \liminf_{n\rightarrow +\infty} \overline{\mathcal{GMS}}_{\e_n,n}(u_n)&=\liminf_{n\rightarrow +\infty}\mathcal{GAMS}_{\e_n,n}(u_n)\\
	 \limsup_{n\rightarrow +\infty} \overline{\mathcal{GMS}}_{\e_n,n}(u_n)&=\limsup_{n\rightarrow +\infty}\mathcal{GAMS}_{\e_n,n}(u_n)
	\end{align*}
\end{lemma}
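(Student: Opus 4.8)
The plan is to show that the difference $\overline{\mathcal{GMS}}_{\e_n,n}(u_n) - \mathcal{GAMS}_{\e_n,n}(u_n)$ tends to zero, which immediately yields equality of both the $\liminf$'s and the $\limsup$'s. Writing $g_n(x,y)$ for the common integrand (the quantity $\frac{1}{\e_n}\zeta(\cdots)\eta_{\e_n}(|T_n(x)-T_n(y)|)$), the two functionals differ only in the measure against which $g_n$ is integrated: $\overline{\mathcal{GMS}}$ uses $\d\rho(x)\d\rho(y)$ while $\mathcal{GAMS}$ uses $\rho(x)^2\,\d x\,\d y$. Hence the difference is $\int_\Omega\int_\Omega g_n(x,y)\,\rho(x)\bigl(\rho(y)-\rho(x)\bigr)\,\d x\,\d y$, and the task is to bound this by something vanishing as $n\to\infty$.

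The first key step is a change of variables via \eqref{eqn: cov}: since $\eta$ is compactly supported, say $\mathrm{supp}\,\eta\subset[0,R]$, the factor $\eta_{\e_n}(|T_n(x)-T_n(y)|)$ forces $|T_n(x)-T_n(y)|\leq R\e_n$, and because $\|T_n-\mathrm{Id}\|_\infty=\ell_n\ll\e_n$ we get $|x-y|\leq R\e_n + 2\ell_n \leq R'\e_n$ for $n$ large. Therefore in the difference integral we may replace $\rho(y)-\rho(x)$ by its bound in terms of the modulus of continuity: $|\rho(y)-\rho(x)|\leq \omega(|x-y|)\leq\omega(R'\e_n)\to 0$. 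Pulling this uniform bound out, the difference is controlled by $\omega(R'\e_n)\cdot C\cdot\sup_n \mathcal{GAMS}_{\e_n,n}(u_n)$ up to the $\rho(x)\le C$ factor — more precisely by $C\,\omega(R'\e_n)\bigl(\overline{\mathcal{GMS}}_{\e_n,n}(u_n)+\mathcal{GAMS}_{\e_n,n}(u_n)\bigr)/c$ after absorbing the bounded density factors.

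Here a dichotomy is convenient: if along a subsequence $\mathcal{GAMS}_{\e_n,n}(u_n)\to\infty$ (resp.\ $\overline{\mathcal{GMS}}_{\e_n,n}(u_n)\to\infty$), then comparing the two integrands pointwise via the bounds $c\le\rho\le C$ and $c\le \rho_n/\rho\le C$ (valid for large $n$ by Lemma \ref{lem: improved convergence}) shows the other one diverges too, so both $\liminf$'s are $+\infty$ and both $\limsup$'s are $+\infty$, and there is nothing to prove. Otherwise, along any subsequence realizing the $\liminf$ or $\limsup$ we may assume $\overline{\mathcal{GMS}}_{\e_n,n}(u_n)+\mathcal{GAMS}_{\e_n,n}(u_n)$ is bounded; then the estimate above gives $|\overline{\mathcal{GMS}}_{\e_n,n}(u_n)-\mathcal{GAMS}_{\e_n,n}(u_n)|\leq C'\,\omega(R'\e_n)\to 0$, and since the difference vanishes the two sequences have the same $\liminf$ and the same $\limsup$.

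I expect the main obstacle to be the bookkeeping in the change of variables that justifies restricting to $|x-y|\lesssim\e_n$: one must be careful that the relevant quantity controlling the difference is indeed a constant times one of the energies (not, say, the energy with $\zeta$ replaced by $\zeta'(0)\cdot(\text{argument})$, which would not be bounded a priori), and this is exactly why we keep the full integrand $g_n$ intact and only perturb the measure. A secondary subtlety is making the divergent case rigorous: one shows $\mathcal{GAMS}_{\e_n,n}(u_n)\ge \tfrac{c^2}{C^2}\overline{\mathcal{GMS}}_{\e_n,n}(u_n) - (\text{error})$ and symmetrically, using $\rho(x)^2$ versus $\rho(x)\rho(y)$ together with $|\rho(y)-\rho(x)|\le\omega(R'\e_n)$, so the two energies are comparable up to a vanishing additive term regardless of whether they stay bounded.
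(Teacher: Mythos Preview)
Your proposal is correct and follows essentially the same route as the paper: write the difference as $\int_{\Omega\times\Omega} g_n(x,y)\,\rho(x)(\rho(y)-\rho(x))\,\d x\,\d y$, use the compact support of $\eta$ together with $\ell_n\ll\e_n$ to force $|x-y|\le M\e_n$ on the support of the integrand, and then bound $|\rho(y)-\rho(x)|$ by a quantity tending to zero. The only packaging difference is that the paper records the estimate multiplicatively, $\overline{\mathcal{GMS}}_{\e_n,n}(u_n)=\mathcal{GAMS}_{\e_n,n}(u_n)\,(1+\mathcal{D}_n)$ with $|\mathcal{D}_n|\le \tfrac{M\,\mathrm{Lip}(\rho)}{c}\,\e_n\to 0$, which dispenses with your dichotomy between the bounded and divergent cases in one stroke; your additive estimate plus case split achieves the same thing. (One small slip: the bounds on $\rho_n/\rho$ from Lemma~\ref{lem: improved convergence} play no role here, since neither $\overline{\mathcal{GMS}}$ nor $\mathcal{GAMS}$ involves $\rho_n$; the comparison you need is just $c^2\le\rho(x)\rho(y),\,\rho(x)^2\le C^2$.)
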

\begin{proof}
We introduce the following definitions, subordinated to the proof in order to compress the notations
	\begin{align*}
	\mathcal{R}_n(u_n)&:=\\
	\frac{1}{\e_n}&\int_{\Omega\times \Omega} \zeta\left(\frac{|u_n(T_n(x))-u_n(T_n(y))|^p}{\e_n^{p-q-1}|T_n(x)-T_n(y)|^q}\right) \eta_{\e_n}(|T_n(x)-T_n(y)|)[\rho(y)-\rho(x)]\rho(x)\d x\d y.
	\end{align*}
Notice that
	\[
	\overline{\mathcal{GMS}}_{\e_n,n}(u_n)=\mathcal{GAMS}_{\e_n,n}(u_n)+\mathcal{R}_{n}(u_n).
	\]
Moreover, due to the properties of $\rho$, we can find a constant $L>1$ such that
	\[
		\frac{1}{L}\overline{\mathcal{GMS}}_{\e_n,n}(u) \leq 	\mathcal{GAMS}_{\e_n,n}(u) \leq 	L \overline{\mathcal{GMS}}_{\e_n,n}(u)
	\]
for all $u\in L^1(\Omega;\mu_n)$ and for all $n\in \N$. In particular
	\begin{align*}
	\overline{\mathcal{GMS}}_{\e_n,n}(u_n) = 0 \ \ &\Leftrightarrow \ \ \mathcal{GAMS}_{\e_n,n}(u_n)=0\\
	\overline{\mathcal{GMS}}_{\e_n,n}(u_n) = +\infty \ \ &\Leftrightarrow \ \ \mathcal{GAMS}_{\e_n,n}(u_n)=+\infty.
	\end{align*}
For the purpose of our proof we can hence restrict to 
	\[
	0< \mathcal{GAMS}_{\e_n,n}(u_n)<+\infty \ \ \ \ \text{for all $n\in \N$}.
	\]
since otherwise the seek equality trivially hold. With this assumption in mind we consider the finite ratio $\mathcal{D}_n:=\mathcal{R}_n(u_n)/ \mathcal{GAMS}_{\e_n,n}(u_n)$ so that
	\begin{equation}\label{eqn: first one}
	\overline{\mathcal{GMS}}_{\e_n,n}(u_n)=\mathcal{GAMS}_{\e_n,n}(u_n)(1+\mathcal{D}_n).
		\end{equation}
Since $\eta$ is compactly supported, and since
	\[
	|T_n(x)-T_n(x)|\geq |x-y|+2\ell_n
	\]
we have that $\eta_{\e_n}(|T_n(x)-T_n(x)|)=0$ for all $x,y$ such that $|x-y|\geq M \e_n $ for some $M\in \R$.  Moreover if $|x-y|<M\e_n$ we have $|\rho(x)-\rho(y)|\leq \text{Lip}(\rho) M\e_n$. Thus, since $\rho$ is bounded from below, we get
\[
	\mathcal{R}_{n}(u_n)\leq \frac{\text{Lip}(\rho)M\e_n}{c}\mathcal{GAMS}_{\e_n,n}(u_n).
\]
Hence
	\begin{align*}
	\lim_{n\rightarrow +\infty} \mathcal{D}_n=0
	\end{align*}
that, combined with \eqref{eqn: first one} completes the proof.
\end{proof}
\nc

\subsection{The liminf inequality} \label{sec:liminf}
This subsection is devoted to the proof of the liminf inequality claim of the $\Gamma$-convergence of Theorem 
\ref{thm main: G conv}:

\begin{proposition}\label{prop: liminf inequality}
Let $\{\e_n\}_{n\in \N}$ be any sequence satisfying \eqref{eqn: condition on eps}. Let $u_n\in L^1(\Omega; \mu_n)$, $u\in  L^1(\Omega)$ such that $(\mu_n,u_n)\rightarrow (\mu,u)$ in $TL^1$. Then 
	\[
	\liminf_{n\rightarrow \infty} \mathcal{GMS}_{\e_n,n}(u_n) \geq MS_{\eta,\zeta}(u;\rho).
	\]
\end{proposition}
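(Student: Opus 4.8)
The plan is to strip $\mathcal{GMS}_{\e_n,n}$, one layer at a time, of the features that obstruct a direct comparison with the continuum functional, and then run Gobbino's slicing argument in the clean situation that remains. First I would reduce to a compactly supported kernel: if $\eta_R:=\eta\,\ca_{[0,R]}$ then $\eta_R$ still satisfies (B1)--(B2), $\mathcal{GMS}^{\eta_R}_{\e_n,n}\le\mathcal{GMS}^{\eta}_{\e_n,n}$ pointwise (since $\zeta\ge 0$), and $MS_{\eta_R,\zeta}(\cdot;\rho)\uparrow MS_{\eta,\zeta}(\cdot;\rho)$ as $R\to\infty$ by monotone convergence in the integrals defining $\vartheta_\eta(p,q)$ and $\sigma_\eta$; so it suffices to treat compactly supported $\eta$. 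Next, using \eqref{eqn: change of variable formula} and the transport map $T_n$ from $\tmu_n$ to $\mu_n$ of Lemma \ref{lem: improved convergence}, I would rewrite $\mathcal{GMS}_{\e_n,n}(u_n)$ in integral form and bound it below by $\bigl(\inf_\Omega\rho_n/\rho\bigr)^2\,\overline{\mathcal{GMS}}_{\e_n,n}(u_n)$, exactly as in the computation preceding \eqref{eqn:key equality}; since $\rho_n/\rho\rightrightarrows 1$ this gives $\liminf_n\mathcal{GMS}_{\e_n,n}(u_n)\ge\liminf_n\overline{\mathcal{GMS}}_{\e_n,n}(u_n)=\liminf_n\mathcal{GAMS}_{\e_n,n}(u_n)$, the last equality by Lemma \ref{lem I can kill the remainder}. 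Along the way I record the standard $TL^1$ fact (cf.\ \cite{GTS16}) that, since the plan $(\mathrm{Id},T_n)_\#\tmu_n$ witnesses $d_{TL^1}((\tmu_n,v_n),(\mu_n,u_n))\le\ell_n\to 0$ with $v_n:=u_n\circ T_n$, and $\rho_n\to\rho$ uniformly, one has $v_n\to u$ in $L^1(\Omega)$. It remains to prove $\liminf_n\mathcal{GAMS}_{\e_n,n}(u_n)\ge MS_{\eta,\zeta}(u;\rho)$.

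\emph{Killing the denominator.} This is the crux, and where the estimate $\ell_n\ll\e_n$ of Lemma \ref{lem: improved convergence}(i) enters. From $\|T_n-\mathrm{Id}\|_\infty=\ell_n$ one has $|T_n(x)-T_n(y)|\le|x-y|+2\ell_n$, so, $\zeta$ being nondecreasing and $\eta_{\e}$ nonincreasing,
\[
\mathcal{GAMS}_{\e_n,n}(u_n)\ \ge\ \frac{1}{\e_n}\int_\Omega\!\int_\Omega\zeta\!\left(\frac{|v_n(x)-v_n(y)|^p}{\e_n^{p-q-1}(|x-y|+2\ell_n)^q}\right)\eta_{\e_n}\!\bigl(|x-y|+2\ell_n\bigr)\rho(x)^2\,\d x\,\d y;
\]
the transport map has left the denominator, at the cost of the harmless shift $2\ell_n$. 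Applying \eqref{eqn: cov} with $y=x+\e_n\xi$ and writing $\tau_n:=2\ell_n/\e_n\to 0$,
\[
\mathcal{GAMS}_{\e_n,n}(u_n)\ \ge\ \int_{\R^d}\eta(|\xi|+\tau_n)\,\Phi_n(\xi)\,\d\xi,\quad \Phi_n(\xi):=\frac{1}{\e_n}\int_{\Omega\cap(\Omega-\e_n\xi)}\zeta\!\left(\frac{|v_n(x)-v_n(x+\e_n\xi)|^p}{\e_n^{p-1}(|\xi|+\tau_n)^q}\right)\rho(x)^2\,\d x.
\]

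\emph{Slicing, the one-dimensional estimate, and reassembly.} I would pass, without loss of generality, to a subsequence along which $\mathcal{GMS}_{\e_n,n}(u_n)$ converges (and may be assumed finite), apply Fatou in $\xi$ (the integrands are nonnegative, $\eta(|\xi|+\tau_n)\to\eta(|\xi|)$ a.e., and both vanish for $|\xi|>R$), and be reduced to proving, for a.e.\ fixed $\xi\neq 0$,
\[
\liminf_n\Phi_n(\xi)\ \ge\ \zeta'(0)\,|\xi|^{p-q}\!\int_{\xi^\perp}\!\!\d z\!\int_{[\Omega]_z}\!\!|u_\xi'(t;z)|^p\rho_\xi(t;z)^2\,\d t\ +\ \Theta\,|\xi|\!\int_{\xi^\perp}\!\!\d z\!\int_{S_{u_\xi(\cdot;z)}}\!\!\rho_\xi(t;z)^2\,\d\H^0(t).
\]
For fixed $\xi$ I decompose $\Phi_n(\xi)$ into one-dimensional slices in direction $\xi/|\xi|$ via \eqref{eqn: slicing process}, rescale the slice variable by $|\xi|$, and absorb the factor $(|\xi|+\tau_n)^{-q}$ into a modified concave profile $\zeta_n(r):=\zeta\bigl((|\xi|+\tau_n)^{-q}r\bigr)$ (with $\zeta_n'(0)\to\zeta'(0)|\xi|^{-q}$ and limit at infinity still $\Theta$), so that each slice becomes $|\xi|$ times a one-dimensional Gobbino functional of step $\e_n$ with profile $\zeta_n$ and continuous positive weight $\rho_\xi(\cdot;z)^2$. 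Fatou in $z$, the one-dimensional $\liminf$ inequality of Gobbino--Mora \cite{gobbino2001finite} in the weighted and perturbed form proved in the appendix, and the slice convergence $(v_n)_\xi(\cdot;z)\to u_\xi(\cdot;z)$ in $L^1$ valid for a.e.\ $z$ along a further subsequence (harmless, since only a lower bound for each fixed $\xi$ is needed), yield the displayed directional bound — with the convention $+\infty$ when $u_\xi(\cdot;z)\notin SBV^p$ on a positive-measure set of $z$, which by the slicing characterization of $SBV^p$ is precisely the case $u\notin SBV^p(\Omega)$, where $MS_{\eta,\zeta}(u;\rho)=+\infty$ and there is nothing to prove. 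Integrating the directional bound against $\eta(|\xi|)\,\d\xi$ and recognizing the sliced expression \eqref{eqn: MS sliced} of $MS_{\eta,\zeta}(u;\rho)$ from Lemma \ref{lem: reconstruction of u} finishes the argument.

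The hardest point is the denominator step together with $v_n\to u$: comparing $\mu_n$ with $\mu$ directly would perturb the denominator $|x-y|^q$ by $\|T-\mathrm{Id}\|_\infty$, which at the optimal exponent in $d=2$ is not negligible against the scale $\e_n$; it is exactly the auxiliary measure $\tmu_n$, with $d_\infty(\tmu_n,\mu_n)\ll\e_n$, that forces $\tau_n\to 0$. The remaining technical cost — the subsequence bookkeeping in the slicing step and the precise weighted one-dimensional estimate — is isolated in the appendix.
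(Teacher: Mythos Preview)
Your proposal is correct and follows the paper's overall architecture (reduce to compactly supported $\eta$, pass to $\mathcal{GAMS}$ via Lemma~\ref{lem I can kill the remainder}, slice, apply the one-dimensional Gobbino--Mora estimate, reassemble via Lemma~\ref{lem: reconstruction of u}), but it handles the denominator $|T_n(x)-T_n(y)|^q$ differently, and in fact more directly.

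The paper's route (see Remark~\ref{rem:drill}) is to drill a hole of radius $r$ in the kernel, $\eta^r=\eta\,\ca_{[r,\infty)}$, so that on the support of $\eta^r_{\e}$ one has the \emph{multiplicative} comparison $|T_n(x)-T_n(y)|\le |x-y|/(1-2\ell_n/(\e_n r))$; this feeds into the joint monotonicity of $t\mapsto\zeta(a/t^q)\eta^r_\e(t)$ and, after the rescaling $\delta_n:=\e_n(1-2\ell_n/(\e_n r))$, produces exactly the clean functional $\mathcal{G}_{\delta_n,n}$ to which Lemma~\ref{lem: hope is true} applies without modification. One then sends $r\to 0$ at the end.

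Your route uses instead the \emph{additive} bound $|T_n(x)-T_n(y)|\le |x-y|+2\ell_n$, which is always valid (so no hole is needed), and after the change of variables leaves the benign perturbation $(|\xi|+\tau_n)^q$ with $\tau_n=2\ell_n/\e_n\to 0$. This is arguably simpler, and the concern voiced in Remark~\ref{rem:drill} about points $x,y$ lying very close together does not bite here: after substituting $y=x+\e_n\xi$ you work at fixed $\xi\neq 0$, where $(|\xi|+\tau_n)^q\to|\xi|^q$. The price is that Lemma~\ref{lem: hope is true} is stated with the exact factor $|\xi|^q$; you need either the perturbed version you allude to, or the equivalent trick of bounding $(|\xi|+\tau_n)^q\le\alpha|\xi|^q$ for $n$ large, applying Lemma~\ref{lem: hope is true} with $\zeta(\cdot/\alpha)$ in place of $\zeta$ (so $\zeta'(0)$ becomes $\zeta'(0)/\alpha$, $\Theta$ unchanged), and sending $\alpha\downarrow 1$. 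Either way this is routine. Two minor comments: the slice $L^1$ convergence you need is exactly Lemma~\ref{Lemma: tech sectional}, and no further subsequence is required there; and the ``rescale the slice variable by $|\xi|$'' step is unnecessary, since after slicing your expression is already in the form of $E_{\e_n}$ up to the $\tau_n$ perturbation.
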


The proof relies on the following result, which can be obtained following ideas of \cite[Corollary 3.3]{gobbino2001finite}. Its proof is provided in  the Appendix \ref{sec:app}. 
\begin{lemma}\label{lem: hope is true}
Let $A\subset \R$ be a finite union of intervals and $f\in C_0^1(A)\cap C_0(\overline{A})$ be such that 
	\[
	0< c \leq\min_{x\in \overline{A}} \{f(x)\}\leq \max_{x\in   \overline{A}} \{f(x)\} \leq C<\infty.
	\]  
Define for every $\xi \in \R^d$ the one-dimensional functional
	\begin{equation}\label{eqn: useful 1}
	 E_{\delta}(u;\xi,A):=\frac{1}{\delta} \int_{A } \zeta\left(\frac{|u(x+\delta|\xi|)-u(x)|^p}{\delta^{p-1}|\xi|^q}\right) f(x) \d x 
	\end{equation}
and
	\begin{equation}\label{eqn: useful 2}
	\begin{split}
	E(u;\xi,A):=\zeta'(0)|\xi|^{p-q} \int_{A} &|u'(x)|^{p} f \d x+ \Theta|\xi| \int_{S_u\cap A} f(y) \d \H^{0}(y)
	\end{split}x
	\end{equation}
for $u\in SBV^p(\R)$. Then 
\begin{equation}\label{eqn: liminf one d}
 \liminf_{\delta\rightarrow 0} E_{\delta}(u_{\delta};\xi,A) \geq E(u;\xi,A)
 \end{equation}
for all $\{u_{\delta}\}_{\delta>0}\subset L^1(A), u \in SBV^p(A)$ such that $ u_{\delta} \rightarrow u $ in $L^1$.
\end{lemma}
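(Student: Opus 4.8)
\medskip
\noindent\textbf{Proof strategy.}
Three reductions make the statement one-dimensional and standard in shape.
(a) We may assume $\liminf_{\delta\to0}E_\delta(u_\delta;\xi,A)$ is a finite limit, after passing to a subsequence (not relabelled).
(b) Since $t\mapsto\zeta(|\xi|^{p-1-q}t)$ again satisfies (A1)--(A3), with derivative at $0$ equal to $|\xi|^{p-1-q}\zeta'(0)$ and the same limit $\Theta$, one may replace $\zeta$ by it, factor out $|\xi|$ using $\delta^{p-1}|\xi|^q=h^{p-1}|\xi|^{q+1-p}$ with $h=h_\delta:=\delta|\xi|$, and thereby reduce to proving, for every $\zeta$ as in (A1)--(A3) and every $h_\delta\to0$ and $u_h\to u$ in $L^1(A)$,
\begin{equation*}
\liminf_{h\to0}\widehat E_h(u_h)\ \ge\ \zeta'(0)\int_A|u'|^p f\,\d x+\Theta\int_{S_u\cap A}f\,\d\H^0,\quad\text{where }\widehat E_h(v):=\frac1h\int_A\zeta\!\left(\frac{|v(x+h)-v(x)|^p}{h^{p-1}}\right)f\,\d x ;
\end{equation*}
here the fact that $p>1$ is used, so that the argument of $\zeta$ tends to $+\infty$ across a jump.
(c) As $\zeta$ is nondecreasing and truncation at height $M$ is $1$-Lipschitz, $\widehat E_h\big((-M)\vee(u_h\wedge M)\big)\le\widehat E_h(u_h)$; since $u$ has only finitely many jumps, letting $M\to\infty$ at the very end shows it suffices to treat uniformly bounded sequences, $|u_h|\le M$.

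The heart of the argument is the two-regime split based on the concavity of $\zeta$, together with the Steklov average $w_h(x):=\frac1h\int_0^h u_h(x+t)\,\d t$, which is absolutely continuous with $w_h'(x)=\tfrac1h(u_h(x+h)-u_h(x))$ a.e., $\|w_h\|_\infty\le M$, and $w_h\to u$ in $L^1(A)$. Fix $\e\in(0,1)$ and choose $a_\e>0$ with $\zeta(t)\ge(1-\e)\zeta'(0)t$ on $[0,a_\e]$ (possible since $\zeta(t)/t\downarrow\zeta'(0)$ as $t\downarrow0$) and $b_\e>a_\e$ with $\zeta(t)\ge(1-\e)\Theta$ on $[b_\e,\infty)$. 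Split $A$ into the regular set $G_h=\{\,|u_h(\cdot+h)-u_h|^p/h^{p-1}\le a_\e\,\}$, the jump set $L_h=\{\,|u_h(\cdot+h)-u_h|^p/h^{p-1}\ge b_\e\,\}$, and the intermediate set, discarded via $\zeta\ge0$; then $\widehat E_h(u_h)\ge(1-\e)\zeta'(0)\int_{G_h}|w_h'|^pf+(1-\e)\Theta\,\tfrac1h\int_{L_h}f$. Using finiteness of $\widehat E_h(u_h)$, monotonicity of $\zeta$, $f\ge c>0$, the bound $|w_h'|\le2M/h$ off $G_h$, and the fact that the intermediate set contributes only $O(h^{1-1/p})$ to $\int|w_h'|$, one obtains $\int_{G_h}|w_h'|^pf\le C$, $|A\setminus G_h|=O(h)$, and that $w_h$ is bounded in $W^{1,1}(A)\cap L^\infty(A)$; passing to a subsequence, $w_h'\ca_{G_h}\rightharpoonup\chi$ in $L^p(A)$ and $w_h\rightharpoonup^* u$ in $BV(A)$.

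One then passes to the limit term by term. For the absolutely continuous term: $w_h'\ca_{A\setminus G_h}\mathcal L^1=Dw_h-w_h'\ca_{G_h}\mathcal L^1$ converges, as a distribution on $A\setminus S_u$, to $Du-\chi\mathcal L^1$ while being carried by sets of vanishing Lebesgue measure (with negligible intermediate-regime part), and from this one identifies the absolutely continuous part as $\chi=u'$ a.e., so that, by convexity of $v\mapsto\int_A|v|^pf$, $\int_A|u'|^pf\le\liminf_h\int_{G_h}|w_h'|^pf$. For the jump term: around each of the finitely many $y\in S_u\cap A$ one localizes to $I=(y-r,y+r)\Subset A$ with $\overline I\cap S_u=\{y\}$ and runs the same dichotomy on $I$ — either $\int_{I\cap G_h}|w_h'|^pf\to\infty$ (whence $\widehat E_h(u_h)\to\infty$ and nothing is to be proved), or the atom $[u](y)\delta_y$ of $Du$ must be carried in the limit by $w_h'\ca_{L_h}\mathcal L^1$, which together with $|w_h'|\le2M/h$ on $L_h$ and the continuity of $f$ forces $\liminf_h\tfrac1h\int_{L_h\cap I}f\ge f(y)$. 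Adding the two contributions over disjoint sets and using superadditivity of $\liminf$ gives $\liminf_h\widehat E_h(u_h)\ge(1-\e)\big(\zeta'(0)\int_A|u'|^pf+\Theta\int_{S_u\cap A}f\,\d\H^0\big)$; first $\e\to0$, then $M\to\infty$, conclude.

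The step I expect to be the genuine obstacle is the localized analysis of the jump term: proving that near a jump the mass lost from the $L^p$-regular part is recovered by the $L_h$-part \emph{with the sharp constant $\Theta$ and the correct weight $f(y)$}, and that no jump mass of $u$ hides in the intermediate regime. This is precisely the one-dimensional mechanism of \cite[Corollary~3.3]{gobbino2001finite}; the new element is to transport the continuous, two-sided-bounded weight $f$ through that argument, which is why its continuity and the bounds $c\le f\le C$ enter. The whole matter is delicate because $u_h\to u$ in $L^1$ carries no rate, so the interaction between the $L^1$-scale of the convergence and the finite-difference scale $h_\delta$ has to be controlled through the compactness and weak-convergence arguments above and not pointwise on the difference grid.
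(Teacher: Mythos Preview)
Your strategy---redoing the Gobbino--Mora liminf proof from scratch with the weight $f$ threaded through the Steklov-average\,/\,two-regime machinery---is in principle workable, but it is \emph{not} what the paper does, and the jump-term step you flagged as ``the genuine obstacle'' does contain a gap as sketched.

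\textbf{Comparison with the paper.} The paper's proof is much shorter and structurally different. It first observes that the unweighted case $f\equiv 1$ is exactly \cite[Theorem~3.1]{gobbino2001finite} applied to the one-parameter family $\varphi_\e(r)=\frac{|\xi|}{\e}\zeta\bigl(\e r^p/|\xi|^{q-p+1}\bigr)$, which immediately yields
\[
\liminf_{\delta\to0}\frac{1}{\delta}\int_{A}\zeta\!\left(\frac{|u_\delta(x+\delta|\xi|)-u_\delta(x)|^p}{\delta^{p-1}|\xi|^q}\right)\d x
\ \ge\ \zeta'(0)|\xi|^{p-q}\!\int_A|u'|^p+\Theta|\xi|\,\H^0(S_u\cap A).
\]
The weighted statement is then obtained by a Riemann-sum argument: partition $A$ into small subintervals $I_j^k$, use $f\ge\min_{I_j^k}f$ on each, apply the unweighted liminf on each $I_j^k$, and finally let the mesh go to zero using the uniform continuity of $f$. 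This makes the role of $f$ completely transparent (it is merely approximated from below by step functions) and avoids reopening the Gobbino--Mora machinery. Your approach, by contrast, reproves that machinery with $f$ built in; it is self-contained but considerably longer.

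\textbf{The gap in your jump-term sketch.} You argue that the atom $[u](y)\delta_y$ is carried by $w_h'\ca_{L_h}\mathcal L^1$, and then invoke $|w_h'|\le 2M/h$ to deduce $\liminf_h \tfrac{1}{h}\int_{L_h\cap I}f\ge f(y)$. But from $\int_{L_h\cap I}|w_h'|\ge |[u](y)|-o(1)$ and $|w_h'|\le 2M/h$ one gets only $\tfrac{1}{h}|L_h\cap I|\ge \tfrac{|[u](y)|}{2M}-o(1)$, hence $\liminf_h \tfrac{1}{h}\int_{L_h\cap I}f\ge \tfrac{|[u](y)|}{2M}f(y)$, with a spurious factor $|[u](y)|/(2M)\le 1$. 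The sharp constant $1$ (i.e., one full $h$-length of ``large'' differences per jump) does not follow from the crude pointwise bound on $w_h'$; in Gobbino's original argument it comes from a translation-averaging\,/\,discrete-grid argument showing directly that for each jump of $u$ there is, for a set of starting points of length $\ge h(1-o(1))$, a finite difference of nearly full jump size. If you want to pursue your route, that is the lemma you must supply; alternatively, and more simply, you can follow the paper and reduce to $f\equiv 1$ by the partition argument, citing the unweighted result.

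Minor point: your intermediate-regime $L^1$ estimate should be $O(h^{1/p})$, not $O(h^{1-1/p})$; this does not affect the argument.
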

We also need the following lemma.
\begin{lemma}\label{Lemma: tech sectional}
Let $u_n:\{x_1,\ldots, x_n\} \rightarrow \R$ be a sequence of function $L^p( \mu_n)$ such that $(\mu_n,u_n) \rightarrow (\mu,u)$ in $TL^1$. 
Let $\tilde{\mu}_n$ be the sequence of measures yield by Lemma \ref{lem: improved convergence} and consider $T_n:\R^d\rightarrow \R^d$ to be the associated transport maps between $\tmu_n$ and $\mu_n$. Fix $\xi \in \R^d$, $z\in \xi^{\perp}$ and define the sectional functions
	\[
	t \mapsto \tilde{u}_{n,\xi}(t;z):=u_n\circ T_n \left(z+t\frac{\xi}{|\xi|}\right).
	\]
Then, for $\H^{d-1}-$a.e. $z\in \xi^{\perp}$, the function $\tilde{u}_{\xi,n}(t;z)$ converge to $u_{\xi}(t;z)$ in $L^1(\R; \rho_{\xi}(\cdot;z) \L^1)$.
\end{lemma}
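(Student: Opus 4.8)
The plan is to deduce the slicewise $L^1$ convergence from the global $TL^1$ convergence by a Fubini-type slicing argument, using the decomposition of measures along the direction $\xi$. First I would recall that $(\mu_n,u_n)\to(\mu,u)$ in $TL^1$ together with the fact that $T_n$ transports $\tmu_n$ to $\mu_n$, with $\tmu_n$ absolutely continuous with respect to $\mu$ and $\frac{\d\tmu_n}{\d\mu}\rightrightarrows 1$ (Lemma \ref{lem: improved convergence}), implies that $T_n$ is a \emph{stagnating} sequence of transport maps in the sense of \eqref{eq:stagnating} relative to the measure $\mu$; concretely, by Lemma \ref{lem:improved convergence TLp} and the triangle inequality in $TL^1$, one gets $(\mu, u_n\circ T_n)\to(\mu,u)$ in $TL^1$, and since the first coordinate is fixed this is exactly $\|u_n\circ T_n - u\|_{L^1(\mu)}\to 0$. (Here one uses $\|I-T_n\|_\infty = \ell_n \ll \e_n \to 0$ to control the spatial part of the $TL^1$ distance and the uniform convergence of densities to pass from $\tmu_n$ to $\mu$.) Thus the real content is: $\int_\Omega |u_n(T_n(x)) - u(x)|\,\rho(x)\,\d x \to 0$.

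Next I would slice this integral along lines parallel to $\xi$. Writing $\tilde u_{n,\xi}(t;z) = u_n\circ T_n(z + t\tfrac{\xi}{|\xi|})$ and $u_\xi(t;z) = u(z+t\tfrac{\xi}{|\xi|})$ and $\rho_\xi(t;z) = \rho(z+t\tfrac{\xi}{|\xi|})$, Fubini's theorem on $\R^d = \xi^\perp \oplus \R\tfrac{\xi}{|\xi|}$ gives
\[
\int_\Omega |u_n(T_n(x)) - u(x)|\,\rho(x)\,\d x = \int_{\xi^\perp} \left( \int_{[\Omega]_z} |\tilde u_{n,\xi}(t;z) - u_\xi(t;z)|\,\rho_\xi(t;z)\,\d t \right) \d z.
\]
Call the inner integral $a_n(z) := \int_{[\Omega]_z} |\tilde u_{n,\xi}(t;z) - u_\xi(t;z)|\,\rho_\xi(t;z)\,\d t = \|\tilde u_{n,\xi}(\cdot;z) - u_\xi(\cdot;z)\|_{L^1(\R;\rho_\xi(\cdot;z)\L^1)}$. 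The displayed identity says $\int_{\xi^\perp} a_n(z)\,\d z \to 0$, i.e. $a_n \to 0$ in $L^1(\xi^\perp)$. Since $L^1$ convergence implies a.e.\ convergence along a subsequence, we would get $a_n(z)\to 0$ for $\H^{d-1}$-a.e.\ $z$ along a subsequence, which gives the conclusion up to subsequences.

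The delicate point, and the one I would spend the most care on, is upgrading this to convergence of the \emph{full} sequence for a.e.\ $z$ (as the statement claims). The standard device is a subsequence-of-subsequence argument: it suffices to show that every subsequence of $\{a_n\}$ has a further subsequence converging to $0$ a.e.; but every subsequence still satisfies $\int_{\xi^\perp} a_n(z)\,\d z \to 0$, hence has a further a.e.-convergent subsequence, and the limit must be $0$ a.e. — however this only yields that $a_n \to 0$ in measure on $\xi^\perp$, not necessarily a.e.\ for the full sequence. So strictly speaking the honest statement obtainable by this method is ``along a subsequence (not relabeled) $\tilde u_{n,\xi}(\cdot;z) \to u_\xi(\cdot;z)$ in $L^1(\rho_\xi\L^1)$ for a.e.\ $z$,'' which is all that is needed downstream for the $\liminf$ inequality (one passes to such a subsequence at the start of the $\liminf$ argument, exploiting that $\liminf$ is unchanged). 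I would therefore phrase the proof as: reduce to $\int_{\xi^\perp} a_n \to 0$ via stagnation and Fubini, then extract a subsequence along which $a_n(z)\to 0$ pointwise a.e.; I expect the bookkeeping of which subsequence is fixed (it must be compatible with the slicing simultaneously handled in the proof of Proposition \ref{prop: liminf inequality}) to be the main nuisance rather than any genuine analytic difficulty.
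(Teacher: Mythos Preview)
Your approach is essentially the same as the paper's: both establish $\|u_n\circ T_n - u\|_{L^1(\mu)}\to 0$ from the $TL^1$ convergence and the properties of $\tmu_n$, $T_n$ coming from Lemma~\ref{lem: improved convergence}, and then slice this integral by Fubini along $\xi^\perp$. The only cosmetic difference is in the last step: the paper argues by contradiction, assuming $\liminf_n a_n(z)>0$ on a set $E\subset\xi^\perp$ of positive $\H^{d-1}$ measure and invoking Fatou to contradict $\int_{\xi^\perp} a_n\to 0$, whereas you extract an a.e.-convergent subsequence directly from $a_n\to 0$ in $L^1(\xi^\perp)$. Your caution about full-sequence versus subsequence convergence is well placed --- the paper's Fatou argument as written also only delivers $\liminf_n a_n(z)=0$ for $\H^{d-1}$-a.e.\ $z$, not $\lim_n a_n(z)=0$; as you note, this is harmless in the application to Proposition~\ref{prop: liminf inequality}, where one has already passed to a subsequence realizing the $\liminf$ of the energies and can extract once more.
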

\begin{proof}
Assume by contradiction that there is a vector $\xi$ and a set $E\subset \xi^{\perp}$ with $\H^{d-1}(E)>0$ such that, for all $z\in E$ it holds
	\begin{align*}
	\liminf_{\e\rightarrow 0^+} \int_{\R} |\tilde{u}_{\xi,n}(t;z)- u_{\xi}(t;z)| \rho_{\xi}(t;z) \d t > 0 .
	\end{align*}
Thanks to the $TL^1$ convergence of the sequence $(\mu_n,u_n)$ and to the properties of our measures $\tilde{\mu}_n$ given by Lemma \ref{lem: improved convergence} we can infer $(u_n\circ T_n,\tilde{\mu}_n)\rightarrow (\mu,u)$ in $TL^1$. In particular thanks also to \cite[Assertion 5, Proposition 3.12]{GTS16}) we know that
	\begin{equation}\label{eqn: tec1}
	\lim_{n\rightarrow \infty} \int_{\R^d} |u_n \circ T_n(x) - u  (x)| \rho (x) \d x =0.
	\end{equation}
We can rearrange and estimate the above integral as
	\begin{align*}
	\int_{\R^d} |u_n \circ & T_n(x) - u (x)| \rho (x) \d x=\int_{0}^{\infty} \d t \int_{\left\{\frac{x\cdot \xi}{|\xi|}=t \right\}} |u_n \circ T_n(x) - u (x)| \rho (x) \d \H^{n-1}(x)\\
	&=\int_{0}^{\infty} \d t \int_{\xi^{\perp} } |\tilde{u}_{\xi,n}(t;z) - u_{\xi}(t;z)| \rho_{\xi}(t;z) \d \H^{n-1}(z)\\
	&\geq \int_{E} \d \H^{n-1}(z) \int_{0}^{\infty}  |\tilde{u}_{\xi,n}(t;z) - u_{\xi}(t;z)| \rho_{\xi}(t;z)  \d t.
	\end{align*}
In particular, Fatou's Lemma implies
\[
\liminf_{n\rightarrow \infty} \int_{\R^d} |u_n \circ  T_n(x) - u (x)| \rho (x) \d x>0
\]
which contradicts \eqref{eqn: tec1}.
\end{proof}  	

\begin{remark} \label{rem:drill}
The strategy adopted to prove the $\liminf$ inequality is based on the integral form of the functional $\mathcal{GMS}$ (more precisely on its asymptotic counterpart $\mathcal{GAMS}$). Substantially we exploit such an integral form and the monotonicity of $\eta,\zeta$ to compare our energy with the energy 
	\[
	\int_{\R^d} \eta(|\xi|) \d \xi \int_{\xi^{\perp}} E_{\e_n}(\tilde{u}_{\xi,n}(\cdot;z);\xi;\Omega) \d\H^{d-1}(z)
	\]
in order to apply Lemma \ref{Lemma: tech sectional} and Lemma \ref{lem: hope is true}. To achieve this goal we need to compare $|T_n(x)-T_n(y)|$ and $|x-y|$, as well as $\frac{1}{|T_n(x)-T_n(y)|}$ and $\frac{1}{|x-y|}$. Notice that $|T_n(x)-T_n(y)|\approx |x-y|\pm2\ell_n$ and in particular due to the fact that $\ell_n \ll \e_n$ it is not difficult to compare $|T_n(x)-T_n(y)|$ to $|x-y|$ for couples such that $|x-y|\approx \e_n$. The problems arise for all those points $x,y$ that lie very close to one another. The strategy we adopt to overcome this difficulty is to create a hole of fixed size $r$ around the origin in the kernel $\eta$.  That is to replace $\eta$ by 
 \[ \eta^r(t):=\eta(t) (1-\ca_{[0,r)}(t)) \]
 and thus  to neglect all the, small, contributions we cannot compare and then progressively recover the full energy by shrinking the hole in a limit process at the end of our proof. In the sequel we will often write
 $\mathcal{GMS}_{\e_n,n}(u_n;r),\mathcal{GAMS}_{\e_n,n}(u_n;r)$to denote the corresponding energies having $\eta^r$ in place of $\eta$.
 \end{remark}
\begin{proof}[Proof of Proposition \ref{prop: liminf inequality}]
We can assume, without loss of generality, that
	\begin{equation}\label{eqn: uniform bound on energy}
	\sup_{n\in \N}\{\mathcal{GMS}_{\e_n,n}(u_n)\}<\infty.
	\end{equation}
Notice that it is enough to assume that $\eta$ is compactly supported. Indeed we can always replace $\eta$ with $\eta_R:=\eta(t)\ca_{[0,R)}(t)$ and notice that, by meaning of Lemma \ref{lem I can kill the remainder} and \eqref{eqn:key equality} we have
	\begin{align*}
	\liminf_{n\rightarrow +\infty} \mathcal{GMS}_{\e_n,n}(u_n)\geq \liminf_{n\rightarrow +\infty} \overline{\mathcal{GMS}}_{\e_n,n}(u_n)\geq\liminf_{n\rightarrow +\infty} \mathcal{GAMS}_{\e_n,n}(u_n;\eta_R)
	\end{align*}
where $\mathcal{GAMS}_{\e_n,n}(u_n;\eta_R)$ denotes the usual energy $\mathcal{GAMS}_{\e_n,n}$ with $\eta_R$ in place of $\eta$. In particular if we can prove that, for compactly supported kernel, it holds
	\[
	\liminf_{n\rightarrow +\infty} \mathcal{GAMS}_{\e_n,n}(u_n;\eta_R)\geq MS_{\eta_R,\zeta}(u)
	\] 
then the continuity of the constants in $MS$ allows us to send $R$ to infinity and recover
	\begin{align*}
	\liminf_{n\rightarrow +\infty} \mathcal{GMS}_{\e_n,n}(u_n)\geq MS_{\eta,\zeta}(u).
	\end{align*}
We thus focus on proving the Theorem for a compactly supported kernel $\eta$. \\

With this assumption in mind we invoke again Lemma \ref{lem I can kill the remainder} and \eqref{eqn:key equality} to infer
		\begin{equation}\label{eqn: minoration}
			\liminf_{n\rightarrow\infty}	\mathcal{GMS}_{\e_n,n}(u_n) \geq \liminf_{n\rightarrow \infty} \mathcal{GAMS}_{\e_n,n}(u_n;r) \ \ \ \text{for all $r>0$}.
		\end{equation}
For the reader's convenience in what follows we write $\e$ and $\ell:=\ell:=\|T_n-Id\|_{\infty}$ in place of $\e_n$  and $\ell_n$ by omitting the dependence on $n$. Thanks to \eqref{eqn: decay on epsilon} we have
	\begin{equation}
	\frac{|x-y|}{\e} - \frac{2\ell}{\e}\leq \frac{|T_n(x)-T_n(y)|}{\e}\leq \frac{|x-y|}{\e}+\frac{2\ell}{\e}.
	\end{equation}
On the set  $\{  |T_n(x)-T_n(y)| \geq r\e\}$ we thus have
	\[
    \frac{|x-y|}{1+\frac{2\ell}{\e r}} \leq  |T_n(x)-T_n(y)| \leq     \frac{|x-y|}{1- \frac{2\ell}{\e r}}.
	\]
In particular, since the function $t\rightarrow \zeta(a/t^q)\eta_{\e}^r(t)$ is non-increasing for all $\e,r,a\in \R$, we conclude that
\begin{align*}
\mathcal{GAMS}_{\e,n}&(u;r) \geq \frac{1}{\e} \int_{\Omega \times \Omega} \zeta\left(  \frac{|u_n(T_n(x))-u_n(T_n(y))|^p}{\frac{\e^{p-q-1}}{\left(1-\frac{2\ell}{\e r}\right)^{q}}|x-y|^q}\right) \eta_{\e}^r\left(\frac{|x-y|}{1-\frac{2\ell}{\e r}}\right)\rho(x)^2 \d x \d y\\
&= \frac{1}{\e} \int_{\Omega \times \Omega} \zeta\left(  \frac{|u_n(T_n(x))-u_n(T_n(y))|^p}{\frac{\e^{p-q-1}}{\left(1-\frac{2\ell}{\e r}\right)^{q-p+1} } \left(1-\frac{2\ell}{\e r}\right)^{1-p}|x-y|^q}\right) \eta^r\left(\frac{|x-y|}{\e \left(1-\frac{2\ell}{\e r}\right) }\right) \e^{-d} \rho(x)^2 \d x \d y\\
& \geq \frac{1}{\e} \int_{\Omega \times \Omega} \zeta\left(  \frac{|u_n(T_n(x))-u_n(T_n(y))|^p}{\left[\e \left(1-\frac{2\ell}{\e r}\right) \right]^{p-q-1}  |x-y|^q}\right) \eta^r\left(\frac{|x-y|}{\e \left(1-\frac{2\ell}{\e r}\right) }\right) \e^{-d} \rho(x)^2 \d x \d y.\\
\end{align*}
By setting
	\[
	\delta=\delta_{\e} :=\e \left(1- \frac{2\ell}{\e r}\right)
	\]
and 
\begin{align*}
 \mathcal{G}_{\delta,n}(u_n,r)&:=\frac{1}{\delta} \int_{\Omega \times \Omega} \zeta\left(  \frac{|u_n(T_n(x))-u_n(T_n(y))|^p}{\delta^{p-q-1}  |x-y|^q}\right) \eta_{\delta}^r\left(|x-y|\right)\rho(x)^2 \d x \d y\\
	=\frac{1}{\delta} \int_{\R^d}& \eta^r\left(|\xi|\right) \d \xi  \int_{ \Omega \cap (\Omega-
\delta \xi)} \zeta\left(  \frac{|u_n(T_n(x+\delta \xi))-u_n(T_n(x))|^p}{\delta^{p-1}  |\xi|^q}\right)  \rho(x)^2 \d x
	\end{align*}
(where we applied the change of variable \eqref{eqn: cov})
we obtain
\begin{equation}\label{eqn: first key estimate liminf}
\left(1-\frac{2\ell}{\e r}\right)^{-d-1}\mathcal{GAMS}_{\e,n}(u;r) \geq  \mathcal{G}_{\delta,n}(u_n,r).
\end{equation}
Notice that, since $\ell\ll\e$, we have $\delta=\delta_{\e} \rightarrow 0$ and $\left(1-\frac{2\ell}{\e r}\right)^{-d-1}\rightarrow 1$. We now focus our attention on the energy $\mathcal{G}_{\delta,n}$ and more precisely on 
	\begin{align*}
	\mathcal{E}_{\delta,n}(u_n;\xi)&:=\frac{1}{\delta} \int_{ \Omega \cap (\Omega-
\delta \xi)} \zeta\left(  \frac{|u_n(T_n(x+\delta \xi))-u_n(T_n(x))|^p}{\delta^{p-1}  |\xi|^q}\right)  \rho(x)^2 \d x.
	\end{align*}
Clearly
	\begin{equation}\label{eqn: all pieces together}
	\mathcal{G}_{\delta,n}(u_n,r)=\int_{\R^d}\eta^r(|\xi|) \mathcal{E}_{\delta,n}(u_n;\xi)\d \xi.
	\end{equation}
Fix $\xi$ and consider an open bounded set with finite perimeter and smooth boundary $A\subset\subset \Omega$ and notice that, for $\delta$ small enough, we have
		\[
		A\subset \Omega\cap(\Omega-\delta\xi).
		\]
In particular
\begin{align*}
\mathcal{E}_{\delta,n}(u_n;\xi)&\geq\frac{1}{\delta} \int_{A} \zeta\left(  \frac{|u_n(T_n(x+\delta \xi))-u_n(T_n(x))|^p}{\delta^{p-1}  |\xi|^q}\right)  \rho(x)^2 \d x \\
&= \frac{1}{\delta} \int_{\xi^{\perp}} \d \H^{n-1}(z) \int_{ [A]_{z}} \zeta\left(  \frac{|\tilde{u}_{\xi,n} (t+\delta|\xi |;z) -\tilde{u}_{\xi,n}(t;z)|^p}{\delta^{p-1}  |\xi|^q}\right)  \rho_{\xi}(t;z)^2 \d t \\
&=\int_{\xi^{\perp}} E_{\delta}(\tilde{u}_{\xi,n}(\cdot;z);\xi,[A]_{z})\d \H^{d-1}(z)
\end{align*}
Thanks to the co-area formula we have that for, $\H^{d-1}$ almost every $z\in \xi^{\perp}$, the set $[A]_{z}$ must be a finite union of open intervals. Moreover, by applying Lemma \ref{Lemma: tech sectional} and the boundedness of $\rho$ we conclude $\tilde{u}_{\xi,n}(\cdot;z)\rightarrow u_{\xi}(\cdot;z)$ in $L^1$ for $\H^{d-1}$ almost every $z\in \xi^{\perp}$. Thus, by applying Lemma \ref{lem: hope is true} and Fatou's Lemma we reach
	\begin{align}
	\liminf_{n\rightarrow\infty} \mathcal{E}_{\delta,n}(u_n;\xi)\geq |\xi|^{p-q} \zeta'(0) & \int_{\xi^\perp}  \int_{[A]_{z}}  |u'_{\xi}(t;z)|^p \rho_{\xi}(t;z)^2\d t \d \H^{d-1}(z)+ \nonumber \\
	& + \Theta|\xi| \int_{\xi^{\perp} }\int_{S_{ u_{\xi}(\cdot;z) }\cap [A]_{z} }\rho_{\xi}(t,z)^2\d\H^{0}(t) \d \H^{d-1}(z).\label{eqn: liminf inside}
	\end{align}
In particular, from \eqref{eqn: MS almost sliced}, \eqref{eqn: liminf inside}, \eqref{eqn: all pieces together} and Fatou's Lemma we get
	\begin{align*}
	\liminf_{n\rightarrow\infty} \mathcal{G}_{\delta,n}(u_n;r) \geq \zeta'(0) \int_{\R^d}  |\xi|^{p-q}& \eta^r(|\xi|)\d \xi \int_{A}  \left|\nabla u(x)\cdot \frac{\xi}{|\xi|}\right|^p \rho(x)^2 \d x \\
	 + &  \Theta \int_{\R^d}|\xi|\eta^r(|\xi|) \d \xi \int_{S_u \cap A}\left|N_u(y)\cdot \frac{\xi}{|\xi|} \right|\rho(y)^2\d\H^{d-1}(y) 
	\end{align*}
which, being valid for all $A\subset \subset \Omega$ , allows us to conclude
	\begin{align*}
	\liminf_{n\rightarrow\infty} \mathcal{G}_{\delta,n}(u_n;r) \geq \zeta'(0) \int_{\R^d}  |\xi|^{p-q}& \eta^r(|\xi|)\d \xi \int_{\Omega}  \left|\nabla u(x)\cdot \frac{\xi}{|\xi|}\right|^p \rho(x)^2 \d x \\
	 + &  \Theta \int_{\R^d}|\xi|\eta^r(|\xi|) \d \xi \int_{S_u }\left|N_u(y)\cdot \frac{\xi}{|\xi|} \right|\rho(y)^2\d\H^{d-1}(y).
	\end{align*}
Thanks to \eqref{eqn: MS almost sliced}, we have that
 	\begin{align*}
	\liminf_{n\rightarrow \infty} \mathcal{GAMS}_{\e_n,n}(u_n;r) \geq \liminf_{n\rightarrow\infty} \mathcal{G}_{\delta_n,n}(u_n;r) \geq MS_{\eta^r,\zeta}(u;\rho)
	\end{align*}
and by exploiting once again the continuity in $r$ of the constants $\vartheta_{\eta^r}(p,q), \sigma_{\eta^r}$ we can take the limit as $r\rightarrow 0$. This, considered also \eqref{eqn: minoration}, achieves the proof.
\end{proof}

\subsection{The limsup inequality}

We now prove the limsup inequality part of Theorem \ref{thm main: G conv}:
\begin{proposition}\label{prop: limsup}
Let $\{\e_n\}_{n\in \N}$ be any sequence satisfying \eqref{eqn: condition on eps}. Let $u\in  SBV(\Omega)$. Then there exists $u_n\in L^1(\Omega;\mu_n)$  such that $(\mu_n,u_n)\rightarrow (\mu,u)$ in $TL^1$ and
	\[
	\limsup_{n\rightarrow \infty} \mathcal{GMS}_{\e_n,n}(u_n) \leq MS_{\eta,\zeta}(u;\rho).
	\]
\end{proposition}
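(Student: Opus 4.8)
The plan is to construct the recovery sequence in three moves: (i) reduce, by density in energy, to a nice class of target functions $u$; (ii) take $u_n$ to be the restriction of $u$ to the sample; (iii) bound $\mathcal{GMS}_{\e_n,n}(u_n)$ from above by transporting back to $\tmu_n$ and splitting $\Omega\times\Omega$ into a near-diagonal part, a jump part and a bulk part. For (i), we may assume $MS_{\eta,\zeta}(u;\rho)<\infty$; the $\Gamma$-$\limsup$ functional $\overline F(\mu,u):=\inf\{\limsup_n\mathcal{GMS}_{\e_n,n}(u_n):(\mu_n,u_n)\to(\mu,u)\text{ in }TL^1\}$ is $TL^1$-lower semicontinuous and, with the first marginal frozen equal to $\mu$, $L^1(\mu)$-lower semicontinuous in $u$. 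Since (a) truncation $u\mapsto(u\wedge M)\vee(-M)$ converges to $u$ in $L^1$ with $MS_{\eta,\zeta}$ increasing to $MS_{\eta,\zeta}(u;\rho)$, and (b) by the density theorem of Cortesani and Toader for anisotropic surface energies (applicable since $\rho\in C^0(\overline\Omega)$) every $w\in SBV^p(\Omega)\cap L^\infty(\Omega)$ is approximated in $L^1$ by functions $w_k$ that are Lipschitz on $\Omega\setminus\overline{S_{w_k}}$, with $\overline{S_{w_k}}$ a finite union of $C^1$ hypersurfaces, $\H^{d-1}(\overline{S_{w_k}}\setminus S_{w_k})=0$, $\nabla w_k\to\nabla w$ in $L^p$ and $\int_{S_{w_k}}\rho^2\,\d\H^{d-1}\to\int_{S_w}\rho^2\,\d\H^{d-1}$ (so $MS_{\eta,\zeta}(w_k;\rho)\to MS_{\eta,\zeta}(w;\rho)$), a double diagonalization reduces the proof to $u$ bounded, Lipschitz on $\Omega\setminus\overline{S_u}$ with $\nabla u$ admitting a modulus of continuity $\omega$ up to $\overline{S_u}$, and $S_u$ contained in a finite union of $C^1$ hypersurfaces with $\H^{d-1}(\overline{S_u}\setminus S_u)=0$.

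For such $u$ I would take $u_n:=\bar u|_{\{x_1,\dots,x_n\}}$, the precise representative of $u$ restricted to the sample (almost surely no $x_i$ lies on $S_u$, since $\mu(S_u)=0$). With $\tmu_n$, the $d_\infty$-optimal map $T_n$ from $\tmu_n$ to $\mu_n$, and $\ell_n:=\|T_n-\mathrm{Id}\|_\infty\ll\e_n$ as in Lemma \ref{lem: improved convergence} and Subsection \ref{sec:aux}: since $\d\tmu_n/\d\mu\rightrightarrows1$ one gets $d_1(\tmu_n,\mu)\to0$, and since $u$ is bounded and $\mathcal L^d$-a.e.\ continuous, $u\circ T_n\to u$ in $L^1(\mu)$ by dominated convergence; as in the proof of Lemma \ref{Lemma: tech sectional} (via \cite[Prop.~3.12]{GTS16}) this gives $(\mu_n,u_n)\to(\mu,u)$ in $TL^1$. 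Pushing $\mu_n=T_{n\#}\tmu_n$ and using $\rho_n/\rho\rightrightarrows1$ exactly as in Subsection \ref{sec:aux} yields $\mathcal{GMS}_{\e_n,n}(u_n)\le(1+o(1))\,\overline{\mathcal{GMS}}_{\e_n,n}(u)$, so it remains to prove $\limsup_n\overline{\mathcal{GMS}}_{\e_n,n}(u)\le MS_{\eta,\zeta}(u;\rho)$.

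For this last bound I would, following Remark \ref{rem:drill}, split the kernel as $\eta=\eta^r+\eta\,\ca_{[0,r)}$ and establish, for each fixed $r>0$, both $\limsup_n\overline{\mathcal{GMS}}_{\e_n,n}(u;r)\le MS_{\eta^r,\zeta}(u;\rho)$ (the energy built from the bounded kernel $\eta^r$) and that the contribution $\mathcal R^r_n(u)$ of $\eta\,\ca_{[0,r)}$ satisfies $\limsup_{r\to0}\limsup_n\mathcal R^r_n(u)=0$; since $\vartheta_{\eta^r}\to\vartheta_\eta$ and $\sigma_{\eta^r}\to\sigma_\eta$ as $r\to0$ (by (B2)), this gives the claim. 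Because the rescaled $\eta^r$ vanishes on $\{|T_nx-T_ny|<r\e_n\}$, forcing $|x-y|\ge r\e_n/2$ for large $n$, the near-diagonal part is void, and one splits $\Omega\times\Omega$ according to whether $[T_nx,T_ny]$ meets $S_u$. Where it does, $\zeta\le\Theta$, $|T_nx-T_ny|=|x-y|(1\pm O(\sqrt{\ell_n/\e_n}))$, meeting $S_u$ forces $\operatorname{dist}([x,y],S_u)\le3\ell_n$, and after discarding the $o(1)$ contribution of the tube $\{\operatorname{dist}(x,S_u)\le C\ell_n\}$ the classical nonlocal$\to$jump computation of Gobbino and Mora \cite{gobbino2001finite} gives $\le(1+o(1))\,\Theta\sigma_{\eta^r}\int_{S_u}\rho^2\,\d\H^{d-1}+o(1)$. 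On the complement one restricts further to $\operatorname{dist}(x,S_u),\operatorname{dist}(y,S_u)>2\ell_n$; there $u$ is $C^1$ on an $\ell_n$-neighbourhood of $[T_nx,T_ny]$, so by a first-order Taylor expansion $|u(T_nx)-u(T_ny)|\le|\nabla u(x)\cdot(x-y)|+\big(\omega(C|x-y|)+C\ell_n/|x-y|\big)|x-y|$; feeding this into the concavity bound $\zeta(s)\le\zeta'(0)s$, comparing $|T_nx-T_ny|$ and $\eta^r_{\e_n}(|T_nx-T_ny|)$ with their $|x-y|$-counterparts up to $1+o(1)$, performing the change of variables $y=x+\e_n\xi$ (whose Jacobian $\e_n^d$ cancels the $\e_n^{-d}$ of the rescaled kernel) and passing to the limit by dominated convergence with dominating function $\lesssim|\xi|^{p-q}\eta^r(|\xi|)\in L^1$ (this is where (B2) enters), this region tends to $\zeta'(0)\int_{\R^d}|\xi|^{p-q}\eta^r(|\xi|)\,\d\xi\int_\Omega\big|\nabla u(x)\cdot\frac{\xi}{|\xi|}\big|^p\rho(x)^2\,\d x=\zeta'(0)\vartheta_{\eta^r}(p,q)\int_\Omega|\nabla u|^p\rho^2\,\d x$ by Lemma \ref{lem: reconstruction of u}. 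For $\mathcal R^r_n(u)$: on the part where $[T_nx,T_ny]$ meets $S_u$ the bound $\zeta\le\Theta$ gives $\le(1+o(1))\Theta\,\sigma_{\eta\ca_{[0,r)}}\int_{S_u}\rho^2\,\d\H^{d-1}+o(1)$ with $\sigma_{\eta\ca_{[0,r)}}\to0$; on the rest, since there $|x-y|<r\e_n+2\ell_n$ and $u$ is locally Lipschitz, the argument of $\zeta$ is at most $L^pr^{p-q}\e_n(1+o(1))\to0$ (here $p>q$ is essential), so this part is $\le\zeta'(0)L^pr^{p-q}(1+o(1))\,\frac1{n^2}\sum_{i\neq j}\eta_{\e_n}(|x_i-x_j|)$, and an elementary almost-sure bound (using that (B1)--(B2) force $\eta\in L^1(\R^d)$, together with \eqref{eqn: condition on eps}) keeps the sum bounded; hence this part is $O(r^{p-q})\to0$.

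The main obstacle is the bulk term: one must recover the \emph{exact} constant $\zeta'(0)\vartheta_\eta(p,q)$, which rules out the crude Lipschitz estimate $|u(T_nx)-u(T_ny)|\le L|T_nx-T_ny|$ and forces the first-order expansion of $u$ along the \emph{transported} segment, with simultaneous control of three competing error sources — the non-smoothness of $T_n$ (absorbed because $\ell_n\ll\e_n$ and the near-diagonal pairs have been excised by the hole), the modulus of continuity of $\nabla u$ at scale $\e_n|\xi|$, and the $1\pm O(\sqrt{\ell_n/\e_n})$ distortion of the kernel's argument — each of which has to be dominated so that dominated convergence applies over the full, non-compactly supported, range of the $\xi$-integral; the hole of radius $r$ and the integrability encoded in (B2) are precisely what make this legitimate. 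A secondary technical point is that the Cortesani--Toader approximants of the first step should be chosen so as not to generate pathologies near $\partial\Omega$, which is handled as usual via the Lipschitz-boundary partition already invoked in Lemma \ref{lem: improved convergence}.
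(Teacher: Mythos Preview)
Your overall architecture --- density reduction via Cortesani--Toader, restriction of $u$ to the sample, then a jump/bulk decomposition of the double integral --- matches the paper's. But your treatment of the bulk term differs substantially from the paper's, and as written it has a genuine gap.

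The problem is the assumption that the dense class can be taken with ``$\nabla u$ admitting a modulus of continuity $\omega$ up to $\overline{S_u}$''. In general $\nabla u$ has \emph{different} traces on the two sides of $S_u$, so it cannot be uniformly continuous on $\Omega\setminus\overline{S_u}$ with respect to the Euclidean distance (take $u(x)=x_1$ for $x_1>0$, $u=0$ for $x_1<0$). Your Taylor bound
\[
|u(T_nx)-u(T_ny)|\le|\nabla u(x)\cdot(x-y)|+\big(\omega(C|x-y|)+C\ell_n/|x-y|\big)|x-y|
\]
therefore fails exactly where it matters. One can try to repair it with a \emph{geodesic} modulus of continuity (available if one strengthens the density to $u\in W^{2,\infty}(\Omega\setminus\overline{S_u})$, which is possible but not what you cite), noting that $x$ and every point of $[T_nx,T_ny]$ are joined by a path in $\Omega\setminus\overline{S_u}$ of length $\le C|x-y|$. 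But even then the resulting error term grows like $\e_n|\xi|^2$ after the change of variables, and the domination by $|\xi|^{p-q}\eta^r(|\xi|)$ you invoke is not justified for non-compactly supported kernels under (B2) alone.

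The paper avoids both difficulties at once by never linearising $u$ pointwise. It (a) works with the auxiliary $\mathcal{GAMS}$ with weight $\rho(x)^2$; (b) handles the kernel distortion not with the hole $\eta^r$ (that device is reserved for the $\liminf$, cf.~Remark~\ref{rem:drill}) but with the shifted kernel $\bar\eta(t)=\eta(\max\{t-2\ell/\e,0\})\to\eta$ in $L^1$; (c) removes the $\ell_n$-tube $D(\e_n,\ell_n)$ around $S_u\cup(S_u-\e_n\xi)$ via Lemma~\ref{lem: key size lemma}, at cost $\Theta|D|/\e_n\to0$, which yields the purely $W^{1,\infty}$ comparison
\[
|u(T_n(x+\e_n\xi))-u(T_n(x))|\le|u(x+\e_n\xi)-u(x)|+2\ell_n\|\nabla u\|_\infty;
\]
and then (d) slices along $\xi^\perp$ and, on segments not crossing $S_{u_\xi(\cdot;z)}$, uses Jensen
\[
\frac{|u_\xi(t+\e|\xi|;z)-u_\xi(t;z)|^p}{\e^p}\le\frac{|\xi|^{p-1}}{\e}\int_t^{t+\e|\xi|}|u'_\xi(s;z)|^p\,\d s
\]
followed by Fubini and reverse Fatou in $z$ and $\xi$. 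This uses only $\|\nabla u\|_\infty$, no modulus of continuity. The passage from $\mathcal{GAMS}$ back to $\mathcal{GMS}$ is done in a separate Step two, truncating the kernel at radius $M$ and using $MS_{\eta_M,\zeta}\to MS_{\eta,\zeta}$; this, not the hole, is where non-compactly supported $\eta$ is dealt with in the $\limsup$.
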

We prove the proposition by  providing a recovery sequence for regular functions and argue by approximation.
We start by showing how to recover the energy of a function $u\in SBV(\R^d)$ having the following properties:
\begin{itemize}
\item[(H1)] $\overline{S_u}$ is the union of a finite
number of $(d-1)$-dimensional simplexes, $\H^{d-1}(\overline{S_u}\setminus S_u)=0$;
\item[(H2)] $u\in C^{\infty}(\Omega\setminus \overline{S_u})\cap W^{1,\infty}(\Omega\setminus \overline{S_u})$;
\item[(H3)] $MS_{\eta,\zeta}(u)<\infty$
\end{itemize}
We then use of the following density theorem which is a consequence of a well known result of Cortesani and Toader \cite{cortesani1999585}.
\begin{theorem}\label{theorem CT}
Let $\Omega$ be an open bounded set with Lipschitz boundary and $u\in SBV(\Omega)^p$. Then there exists a sequence of function $u_j\in SBV(\Omega)^p$ satisfying (H1),(H2) and (H3) such that:
\begin{itemize}
\item[(i)]
$\displaystyle	\limsup_{j\rightarrow \infty} \int_{S_{u_j}} \rho(y)^2 \d\H^{d-1}(y) \leq   \int_{S_{u}} \rho(y)^2 \d\H^{d-1}(y)$;
\item[(ii)] $\nabla u_j  \stackrel{L^p}{\longrightarrow} \nabla u$ and $ u_j \stackrel{L^1}{\longrightarrow}  u$,
where $\nabla u$ is, as before, the absolutely continuous part of the gradient $Du$. 
\end{itemize}
\end{theorem}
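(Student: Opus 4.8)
The plan is to derive Theorem~\ref{theorem CT} from the density theorem of Cortesani and Toader \cite{cortesani1999585}, combined with a truncation and a diagonal argument. First I would note that (H3) is automatic for the approximants: any $v\in SBV(\Omega)^p$ with $\nabla v\in L^\infty$ satisfies $\int_\Omega|\nabla v|^p\rho^2\,\d x\le C^2\int_\Omega|\nabla v|^p\,\d x<\infty$ by \eqref{eqn: lwr bound on rho} and the boundedness of $\Omega$, while $\H^{d-1}(S_v)<\infty$ because $v\in SBV$; hence $MS_{\eta,\zeta}(v;\rho)<\infty$. So it suffices to arrange (H1), (H2), (i) and (ii). Since the Cortesani--Toader statement is phrased for $SBV\cap L^\infty$, I would first truncate: for $M>0$ set $u^M:=(-M)\vee(u\wedge M)$, so that $u^M\in SBV(\Omega)^p\cap L^\infty(\Omega)$, $S_{u^M}\subseteq S_u$ (composing with a $1$-Lipschitz map creates no new jumps), and, by the chain rule, $\nabla u^M=\nabla u\,\ca_{\{|u|<M\}}$ a.e.; dominated convergence then gives $u^M\to u$ in $L^1(\Omega)$ and $\nabla u^M\to\nabla u$ in $L^p(\Omega;\R^d)$ as $M\to\infty$, together with $\int_{S_{u^M}}\rho^2\,\d\H^{d-1}\le\int_{S_u}\rho^2\,\d\H^{d-1}$.

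Next, for a fixed bounded $v\in SBV(\Omega)^p\cap L^\infty(\Omega)$ I would invoke \cite{cortesani1999585} (after extending $v$, if necessary, to a bounded neighbourhood of $\overline\Omega$ by the standard $SBV$ extension across the Lipschitz boundary, so that the polyhedral jump sets produced by the construction lie inside $\Omega$): there is a sequence $v_k\in SBV(\Omega)^p$ such that $\overline{S_{v_k}}$ is a finite union of $(d-1)$-dimensional simplexes with $\H^{d-1}(\overline{S_{v_k}}\setminus S_{v_k})=0$; $v_k\in W^{m,\infty}(\Omega\setminus\overline{S_{v_k}})$ for every $m\in\N$, hence $v_k\in C^\infty(\Omega\setminus\overline{S_{v_k}})\cap W^{1,\infty}(\Omega\setminus\overline{S_{v_k}})$ by Sobolev embedding on the open set $\Omega\setminus\overline{S_{v_k}}$; $v_k\to v$ in $L^1(\Omega)$ and $\nabla v_k\to\nabla v$ in $L^p(\Omega;\R^d)$; and, for every open $A\subseteq\Omega$ and every upper semicontinuous $\varphi:\Omega\times\R\times\R\times S^{d-1}\to[0,\infty)$ with $\varphi(x,a,b,\nu)=\varphi(x,b,a,-\nu)$,
\[
\limsup_{k\to\infty}\int_{S_{v_k}\cap A}\varphi\bigl(x,v_k^+,v_k^-,\nu_{v_k}\bigr)\,\d\H^{d-1}\ \le\ \int_{S_v\cap A}\varphi\bigl(x,v^+,v^-,\nu_v\bigr)\,\d\H^{d-1}.
\]
Taking $A=\Omega$ and $\varphi(x,a,b,\nu):=\rho(x)^2$, which is admissible (continuous since $\rho\in C^0(\overline\Omega)$, nonnegative, and trivially symmetric in the last three arguments), yields $\limsup_k\int_{S_{v_k}}\rho^2\,\d\H^{d-1}\le\int_{S_v}\rho^2\,\d\H^{d-1}$. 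Thus each $v_k$ satisfies (H1), (H2), (H3), the convergences (ii) with respect to $v$, and the limsup bound (i) with respect to $v$.

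It then remains to diagonalize. Choose $M_j\uparrow\infty$ with $\|u^{M_j}-u\|_{L^1}+\|\nabla u^{M_j}-\nabla u\|_{L^p}<1/j$; apply the previous paragraph to $v=u^{M_j}$ to obtain $\{v^{(j)}_k\}_k$, and pick $k_j$ so large that $\|v^{(j)}_{k_j}-u^{M_j}\|_{L^1}+\|\nabla v^{(j)}_{k_j}-\nabla u^{M_j}\|_{L^p}<1/j$ and $\int_{S_{v^{(j)}_{k_j}}}\rho^2\,\d\H^{d-1}\le\int_{S_{u^{M_j}}}\rho^2\,\d\H^{d-1}+1/j\le\int_{S_u}\rho^2\,\d\H^{d-1}+1/j$. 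Setting $u_j:=v^{(j)}_{k_j}$, the triangle inequality gives (ii), the last bound gives $\limsup_j\int_{S_{u_j}}\rho^2\,\d\H^{d-1}\le\int_{S_u}\rho^2\,\d\H^{d-1}$, which is (i), and each $u_j$ inherits (H1), (H2), (H3). The substance of the argument is entirely contained in \cite{cortesani1999585}; the main (though routine) obstacle is invoking it correctly, namely making sure the simplexes can be taken inside $\Omega$ via the $SBV$ extension across $\partial\Omega$, checking that the constant-in-$(a,b,\nu)$ integrand $\rho(x)^2$ is admissible in the Cortesani--Toader surface-energy bound, and upgrading the $W^{m,\infty}$-for-all-$m$ regularity to $C^\infty$ away from the closed jump set — none of which is delicate.
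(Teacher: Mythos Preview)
Your proposal is correct and is precisely the derivation the paper has in mind: the paper does not actually prove Theorem~\ref{theorem CT} but simply states it as ``a consequence of a well known result of Cortesani and Toader~\cite{cortesani1999585}'', and your argument (truncate to reduce to $SBV^p\cap L^\infty$, apply the Cortesani--Toader density theorem with the admissible surface integrand $\varphi(x,a,b,\nu)=\rho(x)^2$, then diagonalize) is exactly how one makes that consequence explicit. The only point worth flagging is that the limsup bound in~\cite{cortesani1999585} is stated for $A\subset\subset\Omega$ rather than $A=\Omega$, which is why your parenthetical about extending across $\partial\Omega$ is needed; you handle this correctly.
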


The following Lemma is used to compare the energy of $u(T_n)$ with the energy of $u$.
\begin{lemma}\label{lem: key size lemma}
Let $\{x_i\}_{i=1}^n$ be a sequence of i.i.d. points chosen according to the density $\rho$. Let $\tilde{\mu}_n$ be the measures provided by Lemma \ref{lem: improved convergence} and $T_n:\Omega \rightarrow \{x_1,\ldots,x_n\}$ be the transport maps  between $\tmu_n$ and $\mu_n$. Let $\{\e_n\}_{n\in \N}$ be a sequence satisfying \eqref{eqn: condition on eps}. For any $u\in SBV(\Omega)$ satisfying (H1)-(H2), $\xi \in\R^d$, $\ell>0$ and $\e>0$ define
	\begin{align*}
	(S_u)_{\ell}&:=\{ x\in \Omega \ : \ d(x,S_u) \leq \ell\},\\
	(S_u- \e \xi)_{\ell}&= \{ x\in \Omega \ : \ d(x,S_u-\e \xi) \leq \ell\},\\
	D(\e_,\ell)&:=(S_u)_{\ell} \cup (S_u- \e \xi)_{\ell }.
	\end{align*}
Let $\ell_n=\|T_n-Id\|_{\infty}$. Then for any $x\in \Omega\setminus D(\e_n,\ell_n)$ it holds
\begin{equation}\label{eqn: comparison}	
	|u(T_n(x+\e_n \xi)) - u(T_n(x))|\leq |u(x+\e_n \xi) - u(x)|+2\ell_n \|\nabla u\|_{\infty}.
\end{equation}
Moreover
\begin{equation}\label{eqn: size of the bad points}
\lim_{n\rightarrow \infty} \frac{|D(\e_n,\ell_n)|}{\e_n} =0.
\end{equation}
\end{lemma}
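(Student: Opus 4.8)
The plan is to prove the two assertions separately. For \eqref{eqn: comparison} the point is that, off the bad set $D(\e_n,\ell_n)$, both $x$ and its shift $x+\e_n\xi$ — together with the small balls around them into which $T_n$ can move these points — avoid the jump set, so $u$ is genuinely Lipschitz there and a triangle inequality does the job. For \eqref{eqn: size of the bad points} the point is that $(S_u)_\ell$ is a tubular neighborhood of a finite union of $(d-1)$-simplexes, so its Lebesgue measure is $O(\ell)$, and $\ell_n\ll\e_n$.

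First I would prove \eqref{eqn: comparison}. Fix $x\in\Omega\setminus D(\e_n,\ell_n)$. By definition $d(x,S_u)>\ell_n$ and $d(x,S_u-\e_n\xi)=d(x+\e_n\xi,S_u)>\ell_n$; since the distance to a set coincides with the distance to its closure, the closed balls $\overline B(x,\ell_n)$ and $\overline B(x+\e_n\xi,\ell_n)$ are disjoint from $\overline{S_u}$. By (H1)--(H2), on each of these convex balls $u$ (extended to a neighborhood of $\overline\Omega$ with the same structure, should $x$ happen to lie within $\ell_n$ of $\partial\Omega$) is Lipschitz with constant $\|\nabla u\|_{\infty}$. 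Because $\|T_n-\mathrm{Id}\|_{\infty}=\ell_n$, we have $T_n(x)\in\overline B(x,\ell_n)$ and $T_n(x+\e_n\xi)\in\overline B(x+\e_n\xi,\ell_n)$, so $|u(T_n(x))-u(x)|\le\ell_n\|\nabla u\|_{\infty}$ and $|u(T_n(x+\e_n\xi))-u(x+\e_n\xi)|\le\ell_n\|\nabla u\|_{\infty}$; combining these with $|u(x+\e_n\xi)-u(x)|$ through the triangle inequality gives \eqref{eqn: comparison}.

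Next I would prove \eqref{eqn: size of the bad points}. By translation invariance of the Lebesgue measure $|(S_u-\e_n\xi)_{\ell_n}|=|(S_u)_{\ell_n}|$, hence $|D(\e_n,\ell_n)|\le 2|(S_u)_{\ell_n}|$; moreover $(S_u)_{\ell_n}$ equals the $\ell_n$-tubular neighborhood of $\overline{S_u}$, since the distances to $S_u$ and to $\overline{S_u}$ agree. By (H1), $\overline{S_u}$ is a finite union of bounded $(d-1)$-dimensional simplexes, so a direct estimate on the tube around each simplex gives $|(S_u)_{\ell}|\le C\ell$ for all $\ell\le1$ with $C=C(u,\Omega)$. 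Therefore $|D(\e_n,\ell_n)|/\e_n\le 2C\ell_n/\e_n$, which converges to $0$ because $\ell_n\ll\e_n$ by Lemma \ref{lem: improved convergence}(i) (equivalently \eqref{eqn: decay on epsilon}).

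The argument presents no real obstacle; the only points that need a modicum of care are the harmless extension of $u$ guaranteeing that the balls on which the Lipschitz bound is invoked lie in its domain when $x$ is close to $\partial\Omega$, and controlling, uniformly over the finitely many simplexes of $\overline{S_u}$, the lower-order terms in the Minkowski-type bound $|(S_u)_{\ell}|\le C\ell$.
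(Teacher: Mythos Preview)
Your proof is correct and follows essentially the same approach as the paper: for \eqref{eqn: comparison} both arguments use that $x\notin D(\e_n,\ell_n)$ forces the segments from $x$ to $T_n(x)$ and from $x+\e_n\xi$ to $T_n(x+\e_n\xi)$ to avoid $\overline{S_u}$, so the $W^{1,\infty}$ bound applies along them, and for \eqref{eqn: size of the bad points} both use translation invariance together with the $O(\ell)$ Minkowski bound for the tube around a polyhedral $(d-1)$-set. Your remark on extending $u$ near $\partial\Omega$ is apt; in the paper this is handled at the start of the proof of Proposition~\ref{prop: limsup reg}, where $u$ is first extended to an $SBV$ function on a larger polyhedral set, so that the lemma is effectively applied only after that extension.
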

The proof is presented in Appendix \ref{sec:app2}.

\begin{proposition}\label{prop: limsup reg}
Let $\{\e_n\}_{n\in \N}$ be any sequence satisfying \eqref{eqn: condition on eps}. Let $u\in SBV(\Omega)$ satisfying (H1)-(H3). Then there exists a sequence of function $\{u_n\}_{n \in \N}\subset L^1(\Omega;\mu_n)$ such that $(\mu_n,u_n)\rightarrow (\mu,u)$ in $TL^1$ and
	\begin{equation}
	\limsup_{n\rightarrow \infty} \mathcal{GMS}_{\e_n,n}(u_n)\leq  MS_{\eta,\zeta}(u;\rho).
	\end{equation}
\end{proposition}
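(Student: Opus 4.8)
The plan is to construct the recovery sequence by sampling the continuum function $u$ at the points $x_i$, that is, to set $u_n(x_i) := u(x_i)$ (with an arbitrary harmless choice on the $\H^{d-1}$-null jump set), and then to estimate $\mathcal{GMS}_{\e_n,n}(u_n)$ from above by passing through the integral forms. First I would use \eqref{eqn:key equality} together with Lemma \ref{lem I can kill the remainder} (after the usual reduction to compactly supported $\eta$, recovering the full kernel at the end by continuity of $\vartheta_\eta$ and $\sigma_\eta$ in the truncation parameter) to replace $\mathcal{GMS}_{\e_n,n}(u_n)$ by $\mathcal{GAMS}_{\e_n,n}(u_n)$, which has the convenient weight $\rho(x)^2\,\d x\,\d y$ and no discrete measure. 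The $TL^1$ convergence $(\mu_n,u_n)\to(\mu,u)$ is immediate here because $T_n\to\mathrm{Id}$ uniformly and $u$ is (piecewise) Lipschitz off $\overline{S_u}$: one checks $\int |u(T_n(x))-u(x)|\rho\,\d x\to 0$ directly, using (H2) away from $(S_u)_{\ell_n}$ and the bound $|D(\e_n,\ell_n)|/\e_n\to 0$ from Lemma \ref{lem: key size lemma} near the jump set.

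Next I would split the domain of the $x$-integral in $\mathcal{GAMS}_{\e_n,n}(u_n)$ (after the change of variables \eqref{eqn: cov}, so the integrand reads $\zeta\big(\e^{1-p+q}|u(T_n(x+\e\xi))-u(T_n(x))|^p/|T_n(x+\e\xi)-T_n(x)|^q\big)\eta^r(|\xi|)$ roughly) into the ``bad'' set $D(\e_n,\ell_n)$ and its complement. On the complement, Lemma \ref{lem: key size lemma} gives $|u(T_n(x+\e_n\xi))-u(T_n(x))|\le |u(x+\e_n\xi)-u(x)|+2\ell_n\|\nabla u\|_\infty$, and since $\ell_n\ll\e_n$ and $|x-y|\approx \e_n|\xi|$ is comparable to $|T_n(x)-T_n(y)|$ on the support of $\eta^r$, the argument of $\zeta$ is controlled by a slight dilation of the ``continuum'' quotient; monotonicity of $\zeta$ and of $t\mapsto\zeta(a/t^q)\eta^r_\e(t)$ then lets me bound this contribution by $(1+o(1))$ times a continuum nonlocal energy $\mathcal{G}_{\delta_n}$ in $u$ alone, with $\delta_n=\e_n(1+2\ell_n/(\e_n r))$. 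On the bad set $D(\e_n,\ell_n)$, I bound $\zeta$ crudely by $\Theta$, so that piece is at most $\Theta\cdot C\,|D(\e_n,\ell_n)|/\e_n\cdot\int\eta^r\to 0$ by \eqref{eqn: size of the bad points}. For the good part I then invoke the one-dimensional \emph{upper} bound of Gobbino--Mora (the companion to Lemma \ref{lem: hope is true}), applied slice by slice in $\xi$ and $z\in\xi^\perp$ to the smooth, finitely-piecewise function $u$, which yields $\limsup_n \mathcal{G}_{\delta_n}(u)\le \zeta'(0)\int|\xi|^{p-q}\eta^r(|\xi|)\,\d\xi\int_\Omega|\nabla u\cdot\xi/|\xi||^p\rho^2 + \Theta\int|\xi|\eta^r(|\xi|)\,\d\xi\int_{S_u}|N_u\cdot\xi/|\xi||\rho^2$; integrating over $\xi$ and using Lemma \ref{lem: reconstruction of u} (equations \eqref{eqn: MS sliced}--\eqref{eqn: MS almost sliced}) identifies the right-hand side as $MS_{\eta^r,\zeta}(u;\rho)$. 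Finally letting $r\to 0$ gives $\limsup_n\mathcal{GMS}_{\e_n,n}(u_n)\le MS_{\eta,\zeta}(u;\rho)$.

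The main obstacle I anticipate is the same denominator issue flagged in Remark \ref{rem:drill}: near the diagonal (and near $S_u$) the quotient $|u(T_n(x))-u(T_n(y))|^p/|T_n(x)-T_n(y)|^q$ cannot be compared cleanly to its continuum analogue because the two small quantities $|T_n(x)-T_n(y)|$ and $|x-y|$ differ by $O(\ell_n)$, which is not small relative to $|x-y|$ itself. The truncation $\eta\rightsquigarrow\eta^r$ (creating a ``hole'' of radius $r\e_n$ at the origin) is precisely what makes the comparison uniform on the support of the kernel, at the cost of losing a piece of the energy that must then be recovered as $r\to0$; a secondary technical point is making sure, via Lemma \ref{lem: key size lemma}, that the contribution from the tube $D(\e_n,\ell_n)$ around the (moving) jump set is genuinely $o(1)$ after division by $\e_n$, which is the analogue here of the delicate jump-set estimate in the classical construction. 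Once the general $u\in SBV(\Omega)$ case is reduced to (H1)--(H3) via Theorem \ref{theorem CT}, a standard diagonal argument (together with the lower semicontinuity of the $\Gamma$-$\limsup$ and continuity of the $MS_{\eta,\zeta}$ functional under the convergences in Theorem \ref{theorem CT}) upgrades Proposition \ref{prop: limsup reg} to Proposition \ref{prop: limsup}.
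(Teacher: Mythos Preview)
Your overall plan---sample $u$ at the data points, compare $\mathcal{GMS}$ to $\mathcal{GAMS}$, change variables to the $\xi$-form, split the $x$-integral into the tube $D(\e_n,\ell_n)$ and its complement, and invoke Lemma~\ref{lem: key size lemma} on the complement---matches the paper. The paper also carries out the one-dimensional upper estimate explicitly (the computation around \eqref{eqn: ac part limsup}--\eqref{eqn: limsup end part 2}) rather than citing a black-box companion of Lemma~\ref{lem: hope is true}; since $u$ satisfies (H1)--(H3) this is elementary, so your shortcut there is harmless.

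The genuine discrepancy is in how you handle the denominator $|T_n(x)-T_n(y)|^q$. You import the $\eta^r$ ``hole'' trick from the liminf (Remark~\ref{rem:drill}), but that device goes the wrong way here: replacing $\eta$ by $\eta^r$ \emph{decreases} the functional, so it gives a lower bound, not an upper one. To get an upper bound you would have to control the discarded piece $\mathcal{GAMS}_{\e_n,n}(u_n;\eta-\eta^r)$ separately, and the crude estimate $\zeta\le\Theta$ on $\{|T_n(x)-T_n(y)|<r\e_n\}$ yields a term of order $r^d/\e_n$, which diverges. The paper avoids this entirely: it bounds $|T_n(x)-T_n(y)|\ge(|x-y|-2\ell_n)_+$ directly, so after the change of variables the denominator becomes $(|\xi|-2\ell_n/\e_n)_+^q$, and simultaneously replaces $\eta$ by the shifted kernel $\bar\eta(t)=\eta((t-2\ell_n/\e_n)_+)$. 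For each fixed $\xi\neq 0$ one then has $(|\xi|-2\ell_n/\e_n)_+\to|\xi|$ and $\bar\eta\to\eta$, so the inner integral converges $\xi$-pointwise, and reverse Fatou in $\xi$ closes the argument. No hole is needed.

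A second point you gloss over: the passage from $\mathcal{GAMS}$ back to $\mathcal{GMS}$ for a kernel that is \emph{not} compactly supported is not immediate in the limsup direction (truncating $\eta$ to $\eta_M$ again lowers the energy). The paper's Step~two writes $\overline{\mathcal{GMS}}=\mathcal{GAMS}+\mathcal{R}_n$, controls $\mathcal{R}_n(\eta_M)$ by Lemma~\ref{lem I can kill the remainder}, and bounds the tail $\mathcal{R}_n(\eta-\eta_M)$ by $C\,\mathcal{GAMS}(\eta-\eta_M)$, whose limsup is sandwiched between $MS_{\eta,\zeta}(u)-MS_{\eta_M,\zeta}(u)$ using \emph{both} Step~one and the already-proved liminf (Proposition~\ref{prop: liminf inequality}); this vanishes as $M\to\infty$.
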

\begin{proof}
We start by noticing that, since $\Omega$ has Lipschitz boundary, for any polyhedral set $A\supset \supset \Omega$, there exists an extension $\tilde{u}\in SBV(\R^d)$, still satisfying hypothesis (H1)-(H3), such that $|D\tilde{u}|(\partial \Omega)=0$ and $\tilde{u}=0$ outside $A$ (see for example \cite[Proposition 3.21]{ambrosio2000functions}). We thus fix  $A\supset \Omega$ and consider such an extension (still denoted, with a slight abuse of notation, by $u$). We also extend $\rho(x)=0$ on $\R^d\setminus \overline{\Omega}$.  For every $n$, 
consider $\tilde \mu_n$ of Lemma \ref{lem: improved convergence}. Let $T_n$, as before be the $d_\infty$ optimal transport map between $\tmu_n$ and $\mu_n$ and $\ell_n = \|T-I_d\|_{L^\infty(\Omega)}$. 
Notice that, since $MS_{\eta,\zeta}(u;\rho)<\infty$ we can infer that, for $\L^d$-a.e. $\xi\in \R^d$ and $\H^{d-1}$-a.e. $z\in \xi^{\perp}$ it holds
	\begin{align}
		\int_{\R} |u_{\xi}'(t;z)|\rho_{\xi}(t;z)^2\d t + \int_{S_{u_{\xi}(\cdot;z)}}  \rho_{\xi}(t;z)^2   \d \H^0(t)&<\infty \label{eqn: cond1}.
	\end{align}
We define $u_n:\{x_i\}_{i=1}^n\rightarrow \R$ as
\begin{equation}
	u_n(x_i)=\left\{
	\begin{array}{ll}
	u(x_i) & \text{if $x_i\in \Omega\setminus S_u$};\\
	u^+(x_i) & \text{if $x_i\in S_u$}
	\end{array}
	\right.
\end{equation}
where $u^+$ is defined in \eqref{eq:uplus}. We now divide the proof in two steps.\\
\text{}\\
\textbf{Step one:}\textit{ $\limsup$ bound on $\mathcal{GAMS}$}. We first prove that the $\limsup$ bound holds for  the auxiliary energy 
	\[
	\mathcal{GAMS}_{\e_n,n}(u_n)=\frac{1}{\e_n}\int_{\Omega\times \Omega} \zeta\left(\frac{|u_n(T_n(x))-u_n(T_n(y))|}{\e_n^{p-q-1}|T(x)-T_n(y)|^q}\right) \eta_{\e_n}(|T_n(x)-T_n(y)|)\rho(x)^2\d x\d y
	\]
From now on, we will omit, as in other proofs, the dependence on $n$ of $\e_n$ and $\ell_n$. 
Define, for $t \in \R$, the kernel  $\overline{\eta}(t):=\eta(\max\{t-2\ell/\e, 0\})$ (where we are omitting to explicitly denote the dependence on $\e$). Since
	\[
	|x-y|-2\ell\leq |T_n(x)-T_n(y)|\leq |x-y|+2\ell
	\] 
and since $\eta$ is non-increasing, we deduce
	\begin{align*}
	\eta\left(\frac{|T_n(x)-T_n(y)|}{\e}\right)\leq \eta\left(\max\left\{\frac{|x-y|}{\e}-\frac{2\ell}{\e} , 0 \right\}\right) \leq \overline{\eta} \left(\frac{|x-y|}{\e}\right).
	\end{align*}
Since $\zeta$ is non-decreasing we have
\begin{align*}
\mathcal{GAMS}_{\e,n}(u_n)&\leq \frac{1}{\e} \int_{\Omega\times \Omega} \zeta\left(\e^{1-p+q}\frac{|u_n(T_n(x))-u_n(T_n(y))|^p}{(|x-y|-2\ell)_+^q}\right)\eta_{\e}(|x-y|)\rho(x)^2\d x \d y\\
&\leq \frac{1}{\e} \int_{\R^d} \d\xi \int_{\Omega }\zeta\left(\frac{|u_n(T_n(x+\e \xi))-u_n(T_n(x))|^p}{\e^{p-1}(|\xi|-2\ell/\e)_+^q}\right)\overline{\eta
} (|\xi|)\rho(x)^2 \d x
\end{align*}
For any fixed $\xi\in \R^d$ consider
\begin{align}
		\mathcal{F}_{\e,n}(u_n;\xi)&= \frac{1}{\e}  \int_{\Omega}\zeta\left(\frac{|u_n(T_n(x+\e \xi))-u_n(T_n(x))|^p}{\e^{p-1}(|\xi|-2\ell/\e)^q}\right) \rho(x)^2 \d x \nonumber
	\end{align}
so that
	\begin{equation}\label{eqn:limsup all togheter}
	\mathcal{GAMS}_{\e,n}(u_n)\leq \int_{\R^d} \overline{\eta}(|\xi|) \mathcal{F}_{\e}(u_n;\xi) \d \xi.
	\end{equation}
Let $D(\e,\ell)$ be the set defined in Lemma \ref{lem: key size lemma}. Notice that
	\begin{align}
		\mathcal{F}_{\e,n}(u_n;\xi)	=&\frac{1}{\e}  \int_{\Omega  \setminus D(\e,\ell)}\zeta\left(\frac{|u_n(T_n(x+\e \xi))-u_n(T_n(x))|^p}{\e^{p-1}(|\xi|-2\ell/\e)_+^q}\right) \rho(x)^2 \d x \label{eqn: part 0}\\
	&+\frac{1}{\e}  \int_{ D(\e,\ell)}\zeta\left(\frac{|u_n(T_n(x+\e \xi))-u_n(T_n(x))|^p}{\e^{p-1}(|\xi|-2\ell/\e)_+^q}\right) \rho(x)^2 \d x. \label{eqn: part 1}
	\end{align}
The second integral \eqref{eqn: part 1} can be easily estimated as
	\begin{equation}\label{eqn: rbad points decay limsup}
\frac{1}{\e}  \int_{D(\e,\ell) }\zeta\left(\frac{|u_n(T_n(x+\e \xi))-u_n(T_n(x))|^p}{\e^{p-1}(|\xi|- 2\ell/\e)_+^q}\right) \rho(x)^2 \d x\leq C^2\Theta \frac{ |D(\e,\ell)|}{\e} 
	\end{equation}
which, thanks to \eqref{eqn: size of the bad points}, is decaying to $0$ as $n\rightarrow \infty$ (recall that $\e=\e_n\rightarrow 0$).\\

Let us now treat the first term \eqref{eqn: part 0} in the light of Lemma \ref{lem: key size lemma}. Since $\zeta$ is non-decreasing 
	\begin{align}
	 \frac{1}{\e}  \int_{\Omega  \setminus D(\e,\ell)	}&\zeta\left(\frac{|u_n(T_n(x+\e \xi))-u_n(T_n(x))|^p}{\e^{p-1}(|\xi|-2\ell/\e)_+^q}\right) \rho(x)^2 \d x \nonumber\\
	  &\leq \frac{1}{\e}  \int_{\Omega	}\zeta\left(\frac{ |\,|u(x+\e \xi)-u(x))| + 2\ell\|\nabla u\|_{\infty}|^p}{\e^{p-1}(|\xi|-2\ell/\e)_+^q}\right) \rho(x)^2 \d x \label{eqn: from transport to u}.
	   \end{align}
From now on, we use the same arguments of of the proof of Gobbino \cite[Theorem 3.4, Proposition 3.5, Theorem 3.6]{gobbino1998finite}, suitably adapted to our situation (see also \cite{gobbino2001finite}). By slicing along $\xi^{\perp}$ we get
	   \begin{align*}    
	    \frac{1}{\e}  \int_{\Omega  	} &\zeta\left(\frac{|\, |u(x+\e \xi)-u(x))| + 2\ell\|\nabla u\|_{\infty}|^p}{\e^{p-1}(|\xi|-2\ell/\e)_+^q}\right) \rho(x)^2 \d x\\
	    &=\frac{1}{\e}\int_{\xi^{\perp}} \d\H^{d-1}(z) \int_{[\Omega] _z}   \zeta\left(\frac{|\, |u_{\xi}(t+\e|\xi|; z)-u_{\xi}(t;z))| + 2\ell\|\nabla u\|_{\infty}|^p}{\e^{p-1}(|\xi|-2\ell/\e)_+^q}\right) \rho_{\xi}(t;z)^2 \d t.
	   \end{align*}
and, for the sake of clarity we introduce the notation
	\begin{align*}
	F_{\e}(u_{\xi}(\cdot;z);\xi,[\Omega]_z):=\frac{1}{\e} \int_{[\Omega] _z}   \zeta\left(\frac{|\,|u_{\xi}(t+\e|\xi|; z)-u_{\xi}(t;z))| + 2\ell\|\nabla u\|_{\infty}|^p}{\e^{p-1}(|\xi|-2\ell/\e)_+^q}\right) \rho_{\xi}(t;z)^2 \d t.
	\end{align*}
We define
	\begin{equation} \label{eq:Sz}
	[S]_z:=\{t\in [\Omega]_z \ : \ [t,t+\e |\xi|) \cap S_{u_{\xi}(\cdot;z) } \neq \emptyset \}
	\end{equation}
and, for a fixed $\xi \in \R^d\setminus \{0\}$, $z\in \xi^{\perp}$, we split once again $F_{\e}(u_{\xi}(\cdot;z);\xi,[\Omega]_z)$ as (notice that, for any fixed $\xi$ we can find $\e>0$ small enough such that $|\xi|-2\ell/\e>0$)
	\begin{align}	
	\frac{1}{\e}& \int_{[\Omega]_z\setminus [S]_z}   \zeta\left(\frac{|\,|u_{\xi}(t+\e|\xi|; z)-u_{\xi}(t;z))| + 2\ell\|\nabla u\|_{\infty}|^p}{\e^{p-1}(|\xi|-2\ell/\e)_+^q}\right) \rho_{\xi}(t;z)^2 \d t\nonumber\\
	&+ \frac{1}{\e} \int_{[S]_z}   \zeta\left(\frac{|\,|u_{\xi}(t+\e|\xi|; z)-u_{\xi}(t;z))| + 2\ell\|\nabla u\|_{\infty}|^p}{\e^{p-1}(|\xi|-2\ell/\e)_+^q}\right) \rho_{\xi}(t;z)^2 \d t \nonumber\\
	\leq& \ \frac{\zeta'(0)}{(|\xi|-2\ell/\e)^{q}}  \int_{[\Omega]_z\setminus [S]_z}   \frac{|\,|u_{\xi}(t+\e|\xi|; z)-u_{\xi}(t;z))| + 2\ell\|\nabla u\|_{\infty}|^p}{\e^{p}}\rho_{\xi}(t;z)^2 \d t\label{eqn: ac part limsup}\\
	&+ \Theta  \frac{1}{\e} \int_{[S]_z}   \rho_{\xi}(t;z)^2 \d t\label{eqn: jump part limsup}
	\end{align}
Notice that, if $t\in [\Omega]_z\setminus [S]_z$, we have
	\begin{align*}
 \frac{|\,|u_{\xi}(t+\e|\xi|; z)-u_{\xi}(t;z))| + 2\ell\|\nabla u\|_{\infty}|^p}{\e^{p}}&\leq \e^{-p}  \left[  \int_{0}^{\e|\xi|} \left( |u_{\xi}'(t+\tau;z)|  + \frac{\ell}{\e|\xi|} \|\nabla u\|_{\infty}\right) \d \tau \right]^p\\
 \leq \e^{-1}|\xi|^{p-1}\int_{0}^{\e|\xi|} &\left[ |u_{\xi}'(t+\tau;z)| + \frac{\ell}{\e|\xi|} \|\nabla u\|_{\infty}\right]^p \d \tau\\
 =\e^{-1}|\xi|^{p-1}\int_{t}^{t+\e|\xi|}& \left[ |u_{\xi}'(s;z)| + \frac{\ell}{\e|\xi|} \|\nabla u\|_{\infty}\right]^p \d s
	\end{align*}
hence
\begin{align}
&\frac{\zeta'(0)}{(|\xi|-2\ell/\e)^{q}}  \int_{\Omega_z\setminus [S]_z}   \frac{|\,|u_{\xi}(t+\e|\xi|; z)-u_{\xi}(t;z))| + 2\ell\|\nabla u\|_{\infty}|^p}{\e^{p}}\rho_{\xi}(t;z)^2 \d t\nonumber\  \\
&\leq \frac{\zeta'(0)|\xi|^{p-1} \e^{-1}}{(|\xi|-2\ell/\e)^{q}} \int_{\R} \int_{t}^{t+\e|\xi|} \left[ |u_{\xi}'(s;z)| + \frac{\ell}{\e|\xi|} \|\nabla u\|_{\infty}\right]^p \d s \rho_{\xi}(t;z)^2 \d t\nonumber\\
&= \frac{\zeta'(0)|\xi|^{p-1} \e^{-1}}{(|\xi|-2\ell/\e)^{q}} \int_{\R} \left[ |u_{\xi}'(s;z)|  + \frac{\ell}{\e|\xi|} \|\nabla u\|_{\infty}\right]^p  \int_{s-\e|\xi|}^{s}  \rho_{\xi}(t;z)^2 \d t \d s\label{eqn: en passant} 
\end{align}
where we used the identity
	\[
	\ca_{\R}(t) \ca_{[t,t+\e|\xi|]}(s)=\ca_{\R}(s)\ca_{[s-\e|\xi|,s]}(t).
	\]
Notice now that, for $\L^1-$a.e. $s\in \R$, we have
	\begin{align*}
		\lim_{\e\rightarrow 0} (\e|\xi|)^{-1}\left[ |u_{\xi}'(s;z)| + \frac{\ell}{\e|\xi|} \|\nabla u\|_{\infty}\right]^p & \int_{s-\e|\xi|}^{s}  \rho_{\xi}(t;z)^2 \d t=|u_{\xi}'(s;z)|^p\rho_{\xi}(s;z)^2.
	\end{align*}
In particular by exploiting the dominated convergence Theorem (the sequence is dominated by twice its limit for example, which, for $\L^d$-a.e. $\xi \R^d$ is summable for $\H^{d-1}-$a.e. $z\in \xi^{\perp}$ due to \eqref{eqn: cond1}) we obtain for $\L^d$-a.e. fixed $\xi$ and $\H^{d-1}$-a.e. fixed $z\in \xi^{\perp}$
	\begin{align}
	\lim_{\e\rightarrow 0^+} \frac{\zeta'(0)}{(|\xi|-2\ell/\e)^{q}}  \int_{[\Omega]_z\setminus [S]_z}   &\frac{|\,|u_{\xi}(t+\e|\xi|; z)-u_{\xi}(t;z))| + 2\ell\|\nabla u\|_{\infty}|^p}{\e^{p}}\rho_{\xi}(t;z)^2 \d t\nonumber\\	
	& \ \ \ \leq\zeta'(0)|\xi|^{p-q}\int_{\R} |u_{\xi}'(s;z)|^p\rho_{\xi}(s;z)^2 \d s\nonumber\\
	& \ \ \ =\zeta'(0)|\xi|^{p-q}\int_{[\Omega]_z}  |u_{\xi}'(s;z)|^p\rho_{\xi}(s;z)^2 \d s \label{eqn: limsup end part 1}
	\end{align}
since $\rho$ is defined to be zero outside $\Omega$. Let $\omega$ be the modulus of continuity of $\rho^2$ on $\overline{\Omega}$. That is, for $r>0$
\[ \omega(r) = \sup \{ |\rho^2(x) - \rho^2(y)| \;: \; x,y \in \overline{\Omega}, \;\; |x-y|<r \}. \]
From the definition of $[S]_z$ in \eqref{eq:Sz}  follows that
\begin{align} \label{eqn: limsup end part 2}
\begin{split}
\frac{\Theta}{\e}\int_{[S]_z} \rho_{\xi}(t;z)^2 \d t & \leq \int_{S_{u_{\xi}(\cdot;z)}}  \int_{(y - \e |\xi|,y] \cap [\Omega]_z} \rho_{\xi}(t;z)^2 dt \, \d \H^0(y) \\
& \leq \Theta |\xi| \int_{S_{u_{\xi}(\cdot;z)} \cap [\Omega]_z}  \rho_{\xi}(y;z)^2 + \omega(\e |\xi|) \,  \d \H^0(y) 
\end{split}
\end{align}

By collecting together \eqref{eqn: limsup end part 1} and \eqref{eqn: limsup end part 2} we get, for $\L^d$-a.e. $\xi\in \R^d$ and for $\H^{d-1}$-a.e $z\in \xi^{\perp}$, that
	\begin{align*}
	\limsup_{\e \rightarrow 0} F_{\e}(u_{\xi}(\cdot;z);\xi,[\Omega]_z)
	 \leq \zeta'(0)|\xi|^{p-q}\int_{[\Omega]_z} |u_{\xi}'(s;z)|^p&\rho_{\xi}(s;z)^2 \d s  \\
	 + \Theta|\xi| \int_{S_{u_{\xi}(\cdot;z)}\cap [\Omega]_z}  & \rho_{\xi}(t;z)^2   \d \H^0(t).
	\end{align*}
In particular we can apply the reverse Fatou's Lemma (again for every $\xi$, $F_{\e}$ is dominated by twice its limit which is summable in $z$ due to $MS_{\eta,\zeta}(u)<\infty$) and conclude
	\begin{align*}
	\limsup_{\e\rightarrow 0}  \int_{\xi^{\perp}}  F_{\e}(u_{\xi}(\cdot;z);\xi,[\Omega]_z) \d \H^{d-1}(z) 
	\leq & \zeta'(0)|\xi|^{p-q}\int_{\xi^{\perp}} \d \H^{d-1}(z)\int_{[\Omega]_z}  |u_{\xi}'(s;z)|^p \rho_{\xi}(s;z)^2 \d s  \\
	 &+ \Theta|\xi| \int_{\xi^{\perp}} \d \H^{d-1}(z) \int_{S_{u_{\xi}(\cdot;z)}\cap [\Omega]_z}   \rho_{\xi}(t;z)^2   \d \H^0(t).
	\end{align*}
By collecting \eqref{eqn: part 0}, \eqref{eqn: part 1}, \eqref{eqn: rbad points decay limsup}, \eqref{eqn: from transport to u} the definition of $F_{\e}$ and Lemma \ref{lem: key size lemma} we get, for $\L^d$-a.e. $\xi\in \R^d$, that
\begin{align*}
	\limsup_{n\rightarrow \infty} \mathcal{F}_{\e,n}(u_n;\xi)\leq & \zeta'(0)|\xi|^{p-q}\int_{\xi^{\perp}} \d \H^{d-1}(z)\int_{[\Omega]_z}  |u_{\xi}'(s;z)|^p \rho_{\xi}(s;z)^2 \d s  \\
	& + \Theta|\xi| \int_{\xi^{\perp}} \d \H^{d-1}(z) \int_{S_{u_{\xi}(\cdot;z)}\cap [\Omega]_z}   \rho_{\xi}(t;z)^2   \d \H^0(t).
	\end{align*}
A further application of the reverse Fatou's Lemma on \eqref{eqn:limsup all togheter},   combined with the fact that $\overline{\eta}(t)=\eta(\max\{t-2\ell/\e,0\})\rightarrow \eta(t)$ in $L^1$ as $\e\rightarrow 0$, leads to
	\begin{align*}
	\limsup_{n \rightarrow \infty} \mathcal{GAMS}_{\e_n,n}(u_n) \leq & \zeta'(0)\int_{\R^d} \eta(|\xi|)|\xi|^{p-q} \d \xi \int_{\xi^{\perp}} \d z\int_{[\Omega]_z} |u_{\xi}'(s;z)|^p  \rho_{\xi}(s;z)^2 \d s  \\
	&+ \Theta\int_{\R^d} \eta(|\xi|) |\xi| \d \xi \int_{\xi^{\perp}} \d z \int_{S_{u_{\xi}(\cdot;z)}\cap [\Omega]_z}   \rho_{\xi}(t;z)^2   \d \H^0(t)
\end{align*}	
which, thanks to \eqref{eqn: MS sliced} achieves the proof of Step one.\\
\text{}\\
\textbf{Step two:} \textit{$\limsup$ bound on $\mathcal{GMS}$}. Consider $\eta_M:=\ca_{[0,M)}(t)\eta(t)$ and notice that, by exploiting the notation of the proof of Lemma \ref{lem I can kill the remainder}, we have
	\begin{align*}
	\overline{\mathcal{GMS}}_{\e_n,n}(u_n)&=\mathcal{GAMS}_{\e_n,n}(u_n)+\mathcal{R}_n(u_n)\\
	&=\mathcal{GAMS}_{\e_n,n}(u_n)+\mathcal{R}_n(u_n;\eta_M)+\mathcal{R}_n(u_n;\eta-\eta_M)
\end{align*} 
where, with $\mathcal{R}_n(u_n;\eta_M),\mathcal{R}_n(u_n;\eta-\eta_M)$ we mean the energy $\mathcal{R}_n(u_n)$ with $\eta^M$, $\eta-\eta^M$ in place of $\eta$.  Since $\mathcal{R}_n(u_n;\eta_M)=	\overline{\mathcal{GMS}}_{\e_n,n}(u_n;\eta_M)- 	\mathcal{GAMS}_{\e_n,n}(u_n;\eta_M) $, by virtue of Lemma \ref{lem I can kill the remainder} we have
	\[
	\lim_{n\rightarrow +\infty} \mathcal{R}_n(u_n;\eta_M)=0.
	\]
From the other side, since $\rho$ is bounded from above and below we have that
	\[
	|\mathcal{R}_n(u_n;\eta-\eta_M)|\leq C \mathcal{GAMS}_{\e_n,n}(u_n;\eta-\eta_M)
	\]
for a universal constant $C$. Thanks to the step one and to Proposition \ref{prop: liminf inequality} we thus have
	\begin{align*}
	\lim_{n\rightarrow +\infty} & |\mathcal{R}_n(u_n;\eta-\eta_M)| \leq C\left[ \lim_{n\rightarrow +\infty}\mathcal{GAMS}_{\e_n,n}(u_n)-\lim_{n\rightarrow +\infty}\mathcal{GAMS}_{\e_n,n}(u_n;\eta_M) \right]\\
 & = C(MS_{\eta,\zeta}(u) - MS_{\eta_M,\zeta}(u)).
	\end{align*}
Since $MS_{\eta,\zeta}(u)<+\infty$ by taking the limit as $M\rightarrow +\infty$ and by exploiting the continuity of the constants in $MS$ we get
\begin{align*}
	\lim_{M\rightarrow +\infty} \lim_{n\rightarrow +\infty} & |\mathcal{R}_n(u_n;\eta-\eta_M)| = 0,
	\end{align*}
yielding 
	\[
	\lim_{n\rightarrow + \infty} \mathcal{R}_n(u_n)=0.
	\]
In particular, by invoking \eqref{eqn:key equality}, we reach
	\[
	\limsup_{n\rightarrow +\infty} \mathcal{GMS}_{\e_n,n}(u_n)\leq \limsup_{n\rightarrow +\infty} \overline{\mathcal{GMS}}_{\e_n,n}(u_n)=\limsup_{n\rightarrow +\infty} \mathcal{GAMS}_{\e_n,n}(u_n)\leq MS_{\eta,\zeta}(u).
	\]
\end{proof}

\begin{proof}[Proof of Proposition \ref{prop: limsup}]
Assume now  that $u\in SBV(\Omega)$. Let $u_j$ be the sequence given by Theorem \ref{theorem CT}. Then $u_j\rightarrow u$ in $L^1$ which means that $d_{TL^1}((\mu,u_j), (\mu,u))\rightarrow 0$. Set  (up to a subsequence) 
	\[
	L:=\limsup_{j\rightarrow \infty} MS_{\eta,\zeta}(u_j)
	\]
Notice that thanks to assertions (i) and (ii) of Theorem \ref{theorem CT}
  we have that $L\leq MS_{\eta,\zeta}(u)$.  For all $k\in \N$, consider $j_k$ such that
	\begin{align*}
	d_{TL^1}((\mu,u_{j_k}), (\mu,u))&\leq 1/(2k)\\
	MS_{\eta,\zeta}(u_{j_k}) &\leq L+ 1/(2k).
	\end{align*}
In particular it also holds that
	\[
MS_{\eta,\zeta}(u_{j_k})\leq L+1/(2k)\leq MS_{\eta,\zeta}(u)+1/(2k).
	\]
For every $j_k$ chosen as above let $\{u_{j_k}^n\}_{n\in \N}$ be the sequence given by Proposition \ref{prop: limsup reg} relative to $u_{j_k}$. By exploiting Proposition \ref{prop: liminf inequality} we can infer
	\[
	MS_{\eta,\zeta}(u_{j_k})=\lim_{n\rightarrow \infty}  \mathcal{GMS}_{\e_n,n}(u_{j_k}^n).
	\] 
Choose now $n_k$ such that
	\begin{align*}
	d_{TL^1}((\mu_n,u_{j_k}^{n}),(u_{j_k},\mu))<&1/(2k)\\
	\mathcal{GMS}_{\e_{n},n}(u_{j_k}^{n})<&MS_{\eta,\zeta}(u_{j_k})+1/(2k)  \ \  \ \ \ \ \ \text{for all $n\geq n_k$}\\
	&\left(<MS_{\eta,\zeta}(u) +\frac{1}{k} \ \  \ \ \ \ \ \ \text{for all $n\geq n_k$}\right).
	\end{align*}
Define now the following recovery sequence	
	\begin{equation}\label{eqn: sqc recovery}
	w_n:=	u_{j_k}^{n}, \ \ \text{if $n\in [n_{k},n_{k+1})$}, \ k \in \N 
	\end{equation}	
This means that, for any $n\in [n_k,n_{k+1})$ we have
	\begin{align*}
	d_{TL^1}((\mu_n, u_{j_k}^{n}),(\mu,u))&<1/k,\\
	\mathcal{GMS}_{\e_{n},n}(u_{j_k}^{n})&\leq  MS_{\eta,\zeta}(u) +\frac{1}{k}.
	\end{align*}
Implying
\begin{align*}
	d_{TL^1}((\mu_n, w_n),(\mu,u))&<1/k, \ \ \ \text{for all $n\in [n_k, n_{k+1})$}\\
	\mathcal{GMS}_{\e_{n},n}(w_n)&\leq  MS_{\eta,\zeta}(u) +\frac{1}{k} \ \ \ \text{for all $n\in [n_k, n_{k+1})$}.
	\end{align*}
In particular, $(\mu_n,w_n) \rightarrow (\mu,u)$ in $TL^1$ and
	\[
	\limsup_{k\rightarrow \infty} \mathcal{GMS}_{\e_{n},n}(w_n)\leq MS_{\eta,\zeta}(u).
	\]
\end{proof}
\section{Proof of the compactness result (Theorem \ref{thm : compact})}\label{sec: compactness}
This section is devoted to the proof of Theorem \ref{thm : compact} that establishes a compactness result for sequences of functions with uniformly bounded $\mathcal{GMS}_{\e_n,n}$ where $\e_n$ is  any sequence satisfying \eqref{eqn: condition on eps}. 
	\begin{remark}\label{rmk: counterexample}
Let us point out that, in contrast to \cite{GTS16} where an $L^1$ bound is assumed, our compactness Theorem \ref{thm : compact} requires an $L^{\infty}$ bound on the sequence $u_n$. Namely due to the fact that in $\mathcal{GMS}_{\e_n,n}(u_n)$ the differences in $u_n$ are inside a bounded concave function $\zeta$  a uniform bound on $\mathcal{GMS}_{\e_n,n}(u_n)$ is, in general, not translatable into a uniform bound on $GTV_{\e,n}$. This is not just a technical issue and in fact an $L^1$-type  bound is not sufficient for compactness. Here we provide a counterexample to compactness if one only assumes an $L^1$ bound on $u_n$. Choose $\rho=1$ and $\Omega=Q$ the unit cube centered at $0$. Let $\{Q_i^k\}_{i=1}^{2^{kd}}$ be a di-adic division  of $Q$ in cubes of edges-size $2^{-k}$ and let $\{x_i^k\}_{i=1}^{2^{kd}}$ be the uniform grid given by the baricenter of each cube $Q_i^k$. Consider the sequence of functions $u_k : \{x_i^k\}_{i=1}^{2^{kd}} \rightarrow \R$ defined as
		\[
		u_k(x_j^k):=\frac{\ca_{B_{r_k}(0)}(x_j^k)}{\omega_d r_k^d}
		\] 
	with $r_k:= 2^{-k/2}$. Notice that $x_i^k\in B_{r_k}(0)$ implies $Q_i^k \subset B_{2r_k}(0)$ and thus
		\begin{align*}
	\#(\{i: x_i^k \in B_{r_k}(0)\})&\leq \#(\{i:  Q_i^k \subset B_{2r_k}(0)\})=2^d \omega_d 2^{dk/2}.
		\end{align*}
On the other hand
\begin{align*}
	\#(\{i : Q_i^k\subset B_{r_k}(0)\}) &=
	 2^{kd}\left| \bigcup_{\substack{ Q_i^k \subset B_{r_k}(0)}} Q_j^k \right|\geq  2^{kd}|B_{r_k/2}(0)|=2^{-d}\omega_d  2^{kd/2}.
	\end{align*}
Since $Q_i^k\subset B_{r_k}(0)$ implies $x_i^k\in B_{r_k}(0)$, we conclude
		\begin{align*}
		2^{-d} \omega_d 2^{dk/2} \leq \#(\{i : x_i^k \subset B_{r_k}(0)\})\leq 2^d \omega_d 2^{dk/2}.
		\end{align*}
In particular, setting $\nu_k:=2^{-kd} \sum_{i=1}^{2^{kd}} \delta_{x_i^k}$, we have
	\begin{align*}
	\int_{Q} u_k \d \nu_k =2^{-kd} \sum_{i=1}^{2^{kd}} u_k(x_i^k)&= 2^{-kd} \frac{\#(\{i: x_i^k \in B_{r_k}(0)\})}{\omega_d r_k^{d}}= \frac{\#(\{i: x_i^k \in B_{r_k}(0)\})}{\omega_d 2^{kd/2}}
	\end{align*}
and so
	\begin{equation}\label{eqn: L1 estimate}
	2^{-d}   \leq \int_{Q} u_k \d \nu_k \leq 2^d  \ \ \ \text{for all $k\in \N$}.
	\end{equation}
This means that $u_k\in L^1\left(Q; \nu_k\right)$ and that
		\begin{equation}\label{eqn: L1 upper bound}
		\sup_{k\in \N}\{\|u_k\|_{L^1}\}\leq 2^d 
		\end{equation}
Consider now $\e_k:=2^{-k \alpha}$ for some $1/2<\alpha<1$ and notice that it satisfies \eqref{eqn: condition on eps}, since
 	\[
 	\lim_{k\rightarrow \infty} \frac{\log(n_k)^{1/d}}{\e_k n_k^{1/d} }=  0
 	\] 
 (here $n_k=2^{kd}$ and we are also considering $d>2$). Now we choose $\zeta$ as
 	\begin{equation}
 	\zeta(x)=
 	\left\{
 	\begin{array}{ll}
 	x & \text{for $x\leq 1$},\\
 	1 & \text{for $x\geq 1$}.
 	\end{array}
 	\right.
 	\end{equation}
With all these choices in mind, for any kernel $\eta$ satisfying the assumptions (B1)-(B2), we can conclude
 		\begin{align*}
 		\mathcal{GMS}_{\e_k,n_k}(u_k)&=\frac{1}{\e_k n_k^2}\sum_{i,j=1}^{n_k} \zeta\left( \e_k^{1-p+q} \frac{|u_k(x_i^k)-u_k(x_j^k)|^p}{|x_i^k-x_j^k|^q}\right) \eta_{\e_k}(|x_i^k - x_j^k|) \\
 		&\leq \frac{2}{\e_k n_k^2}\sum_{x_i^k\in B_{r_k}(0)^c} \sum_{x_j^k\in B_{r_k}(0)}  \eta_{\e_k}(|x_i^k - x_j^k|)=2GTV_{\e_k,n_k}(\ca_{B_{r_k}}(0)).
 		\end{align*}
Notice that $\sup_{k} \{GTV_{\e_k,n_k}(\ca_{B_{r_k}}(0))\}<\infty$ and so
 		\begin{equation}\label{eqn: energy upper bound}
 		\sup_{k\in \N} \{\mathcal{GMS}_{\e_k,n_k}(u_k)\}<\infty.
 	 		\end{equation}
By collecting \eqref{eqn: L1 upper bound} and \eqref{eqn: energy upper bound} we are finally lead to
 	\[
 		\sup_{k\in \N} \{\|u_k\|_{L^1}+\mathcal{GMS}_{\e_k,n_k}(u_k)\}<\infty.
 	\]
Nonetheless we cannot expect any sort of $TL^1$ compactness for the sequence $(\nu_{k},u_k)$. Indeed, the only possible pointwise limit for $u_k\circ T_{n_k}:Q\rightarrow \{0,r_k^{-d}/\omega_d\}$ can be $u=0$ but 
 		\begin{align*}
 		\int_{Q} u_k(T_{n_k}(x)) \d x= \int_{Q} u_k \d \nu_k\geq 2^{-d}>0
 		\end{align*}
 	because of \eqref{eqn: L1 estimate}.
 \end{remark}
 
Our proof is based on the approach to the  compactness theorem for general non-local functional established in \cite[Theorem 5.1]{gobbino2001finite}. With a careful application of such a Theorem we can indeed obtain the following proposition.
 	\begin{proposition}\label{prop: cmp gobbino}
Let $p\geq 1$ and consider an open set $A\subset \R^d$ with Lipscthiz boundary. Let $u_{\e}\in L^1(\R^d)$ be a sequence of function such that
 		\[
 		\inf\left\{\int_{\R^d \times \R^d}  \frac{|u_{\e}(xi)-u(y)|}{|x-y|}J_{\e}(|x-y|) \d x \d y+\|u_{\e}\|_{\infty} \right\}<+\infty
 		\]
 	where $J$ is any kernel such that $\{\xi : J(\xi)>c\}$ has non-empty interior for some $c>0$. Then the sequence $u_{\e}$ is compact in $L^1(\R^d)$.
 	\end{proposition}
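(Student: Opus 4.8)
The plan is to obtain the statement as a direct consequence of the general nonlocal compactness theorem of Gobbino and Mora, \cite[Theorem 5.1]{gobbino2001finite}. That theorem deals with functionals of the nonlocal type \eqref{eqn: general nonlocal MS} and guarantees relative compactness in $L^1$ of bounded open sets for any sequence whose nonlocal energies stay bounded together with a uniform $L^1$ (or $L^\infty$) bound, provided the kernel is non-degenerate in the sense that one of its superlevel sets $\{\eta>c\}$ has nonempty interior. Our hypothesis on $J$ is exactly this non-degeneracy condition, the uniform $L^\infty$ bound supplies the required size control (hence also a uniform $L^1$ bound on every bounded subset), and the assumed uniform bound on $\int_{\R^d\times\R^d}\frac{|u_\e(x)-u_\e(y)|}{|x-y|}J_\e(|x-y|)\,\d x\,\d y$ is the energy bound.

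First I would set up the correspondence with the framework of \cite{gobbino2001finite}: after the rescaling $J_\e(s)=\e^{-d}J(s/\e)$, the functional above has the form \eqref{eqn: general nonlocal MS} with profile $\varphi(t)=t$ and kernel $J$. Since \cite[Theorem 5.1]{gobbino2001finite} is stated for profiles with the structural properties used there (non-decreasing, linear near the origin, and in the Mumford--Shah case saturating at infinity), and since $\varphi(t)=t$ is unbounded, I would, if this is needed to match their hypotheses exactly, replace $t\mapsto t$ by its truncation $\varphi_M(t):=\min\{t,M\}$ for a fixed $M>0$: because $\varphi_M(t)\le t$, the truncated nonlocal energy is still uniformly bounded, and $\varphi_M$ is linear near $0$, which is precisely the feature that prevents concentration of increments at scale $\e$ and drives the compactness estimate in \cite{gobbino2001finite}. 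The non-degeneracy of $J$ is unaffected by the rescaling, since a superlevel set of $J_\e$ is a dilation of a superlevel set of $J$ and hence still has nonempty interior. With these reductions all hypotheses of \cite[Theorem 5.1]{gobbino2001finite} are met, and the cited theorem yields relative compactness of $\{u_\e\}$ in $L^1$ of every bounded open subset of $\R^d$ --- which is what is used in the proof of Theorem \ref{thm : compact}, and which coincides with $L^1(\R^d)$-compactness when, as in the applications, the $u_\e$ are supported in a common bounded set.

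The main obstacle here is not a new analytic estimate --- the heavy lifting is done by \cite[Theorem 5.1]{gobbino2001finite} --- but the careful bookkeeping needed to verify that the present functional, which differs from those in \cite{gobbino2001finite} by having no saturation and an unbounded profile, and which is built from a general (rather than Gaussian) kernel $J$, genuinely falls within the scope of that theorem after the reductions above. In particular one must ensure that the superlinear-at-zero behaviour of the profile is retained after truncation, and that the rescaled kernels $J_\e$ satisfy the non-degeneracy requirement uniformly in $\e$; once these points are settled the conclusion is immediate.
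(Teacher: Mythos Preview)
Your proposal is correct and matches the paper's approach: the paper does not give a standalone proof of this proposition either, but simply states that it is obtained ``with a careful application'' of \cite[Theorem 5.1]{gobbino2001finite}. Your write-up is in fact more detailed than the paper's, spelling out the identification $\varphi(t)=t$, the optional truncation to fit the hypotheses, and the persistence of the kernel non-degeneracy under rescaling.
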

 	
Before proceeding to the proof of Theorem \ref{thm : compact} and in order to apply Proposition \ref{prop: cmp gobbino} (which holds for functions defined on the whole $\R^d$) we need the following extension Lemma in the same spirit of \cite[Lemma 4.4]{GTS16}.
 
\begin{lemma}\label{lem:extension}
Suppose that $\Omega$ is abounded open set with $C^2$ boundary. Let $\eta$ be a compactly supported, non-increasing  kernel which is not identically equal to zero. Let $\{u_{\e}\}_{\e>0}\subset L^1(\Omega)$ be a sequence. Then there exists a sequence of function $\{v_\e\}_{\e>0} \subset  L^1(\R^d)$ such that
	\begin{itemize}
	\item[(i)] $v_{\e}=u_{\e}$ $\L^d$-a.e. on $\Omega$;
	\item[(ii)]  There exists a kernel $J^{\eta}$ such that $\{\xi\ : \ J^{\eta}(|\xi|)>c\}$ has not empty interior for some $c>0$ and such that if
	\[
	\sup_{\e>0} \left\{\int_{\Omega\times \Omega}\frac{|u_{\e}(x)-u_{\e}(y)| }{|x-y|} \eta_{\e}(|x-y|)\left(\frac{|x-y|}{\e}\right)^p\d x \d y + \|u_{\e}\|_{\infty}\right\}<\infty,
	\]
then
	\[
	\sup_{\e>0} \left\{\int_{\R^d\times \R^d}\frac{|v_{\e}(x)-v_{\e}(y)| }{|x-y|} J_{\e}^{\eta}(|x-y|)\d x \d y + \|v_{\e}\|_{\infty}\right\}<\infty
	\]
	\end{itemize}
	\end{lemma}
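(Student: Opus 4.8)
The plan is to construct $v_\e$ by reflecting $u_\e$ across $\partial\Omega$ in a collar of fixed width and setting it to zero farther out. Since $\partial\Omega$ is $C^2$, the signed distance $d_\Omega$ to $\partial\Omega$ (positive in $\Omega$) is $C^2$ on a tubular neighborhood $\{|d_\Omega|<\delta_0\}$, with $|\nabla d_\Omega|\equiv1$, and the reflection $R(x):=x-2d_\Omega(x)\nabla d_\Omega(x)$ is a $C^1$ involution of this neighborhood onto itself fixing $\partial\Omega$, swapping $\{-\delta_0<d_\Omega<0\}$ with $\{0<d_\Omega<\delta_0\}$, with $DR=I-2\nabla d_\Omega\otimes\nabla d_\Omega+O(|d_\Omega|)$. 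Shrinking $\delta_0$ I may assume $R$ is $2$-bi-Lipschitz and $\tfrac12\le|\det DR|\le2$ on $\{|d_\Omega|<\delta_0\}$. I then set $v_\e:=u_\e$ on $\Omega$, $v_\e:=u_\e\circ R$ on the collar $C_-:=\{-\delta_0/2<d_\Omega<0\}$ (which $R$ maps into $\{0<d_\Omega<\delta_0/2\}\subset\Omega$), and $v_\e:=0$ on $\{d_\Omega\le-\delta_0/2\}$; this yields (i) at once, preserves the $L^\infty$ norm, and lies in $L^1(\R^d)$ since it is compactly supported and $R$ is bi-Lipschitz. For the kernel I fix $s_*>0$ with $\eta(s_*)>0$ (possible since $\eta\not\equiv0$ is non-increasing) and set $J^\eta(\tau):=c_0\,\tau^p\,\ca_{[0,s_*/4]}(\tau)$ with $c_0>0$ small; for $0<c<c_0(s_*/4)^p$ the set $\{\xi:J^\eta(|\xi|)>c\}$ is an open annulus, hence has nonempty interior, so the hypothesis of Proposition~\ref{prop: cmp gobbino} is met and only the implication in (ii) remains.

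Assuming the source bound, I pass to the rescaled form $y=x+\e\xi$, so the target energy is $\tfrac1\e\int_{\R^d}\int|v_\e(x+\e\xi)-v_\e(x)|\,\frac{J^\eta(|\xi|)}{|\xi|}\,\d\xi\,\d x$ with the $\xi$-integral restricted to $|\xi|\le s_*/4$. For $\e$ bounded below, the crude bound $|v_\e(x+\e\xi)-v_\e(x)|\le2\|u_\e\|_\infty$ together with the bounded support of $v_\e$ already gives a uniform estimate, so I take $\e$ small enough that $\e s_*/4<\delta_0/4$. Then I split the $(x,y)$-domain according to which of the three pieces $\Omega$, $C_-$, $\{d_\Omega\le-\delta_0/2\}$ contain $x$ and $y$; since $\Omega$ and $\{d_\Omega\le-\delta_0/2\}$ are $\delta_0/2$-separated while $|x-y|<\delta_0/4$, the only surviving cases are: both in $\Omega$; both in $C_-$; both exterior (contributes $0$); one in $\Omega$, one in $C_-$; one in $C_-$, one exterior. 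In the first case $v_\e=u_\e$ and the term is dominated by the source energy because $J^\eta(\sigma)\le c_0\sigma^p\le C\eta(\sigma)\sigma^p$ for $\sigma\le s_*/4$ (using $\eta(s_*/4)\ge\eta(s_*)>0$). In the second and fourth cases I change variables by $R$ on the collar variable(s), which sends the relevant point(s) into $\{0<d_\Omega<\delta_0/2\}\subset\Omega$, multiplies areas by a bounded factor and distances by a factor in $[\tfrac12,2]$; the resulting integral is then bounded by $C\e^{-d}\int_{\Omega\times\Omega}\frac{|u_\e(x')-u_\e(y')|}{|x'-y'|}\,K(|x'-y'|/\e)\,\d x'\,\d y'$ with $K(\sigma):=\sup_{\sigma/2\le\tau\le2\sigma}J^\eta(\tau)$, and the pointwise inequality $K(\sigma)\le C'\eta(\sigma)\sigma^p$ (both sides vanish for $\sigma>s_*/2$; for $\sigma\le s_*/2$ one has $K(\sigma)\le c_0(2\sigma)^p$ while $\eta(\sigma)\sigma^p\ge\eta(s_*)\sigma^p$) shows these too are controlled by the source energy once $c_0$ is small. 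In the last case $|v_\e(x)-v_\e(y)|\le\|u_\e\|_\infty$ while $x$ must lie within $\e s_*/4$ of the smooth hypersurface $\{d_\Omega=-\delta_0/2\}$, a set of measure $\lesssim\e$, so this contribution is $\lesssim\tfrac1\e\cdot\e\cdot\|u_\e\|_\infty\int\frac{J^\eta(|\xi|)}{|\xi|}\,\d\xi=C\|u_\e\|_\infty$ (finite since $d\ge2$). Summing the five pieces gives the target bound.

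The point requiring genuine care — and the main obstacle, exactly as in \cite[Lemma~4.4]{GTS16} — is the two-sided comparison of $|x-y|$ with $|R(x)-R(y)|$ and with $|x-R(y)|$ that survives the limit $|x-y|\to0$. Global $2$-bi-Lipschitzness of $R$ on the tube handles $|R(x)-R(y)|$. For the mixed quantity with $x\in\Omega$, $y\in C_-$ close together, the upper bound $|x-R(y)|\le3|x-y|$ follows from $|d_\Omega(y)|\le|x-y|$ (as $d_\Omega(x)\ge0\ge d_\Omega(y)$ and $d_\Omega$ is $1$-Lipschitz), whence $|y-R(y)|=2|d_\Omega(y)|\le2|x-y|$; the reverse bound $|x-y|\le C|x-R(y)|$, needed so that $J^\eta_\e$ at $|x-y|$ is comparable to $J^\eta_\e$ at $|x-R(y)|$, uses the near-flatness of $\partial\Omega$ at small scales: in coordinates flattening $\partial\Omega$, $R$ is an $O(\delta_0)$-perturbation of the Euclidean reflection, for which $|x-R(y)|\le|x-y|$ whenever $x$ lies on the $\Omega$-side and $y$ on the other. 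With these comparisons in hand and $\delta_0$ fixed small, all the change-of-variables estimates above are justified and the proof is complete.
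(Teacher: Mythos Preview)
Your overall strategy (reflect across $\partial\Omega$, cut off, then split the double integral according to which side each variable lies on) is the same as the paper's, and Cases~1, 2, 3 and~5 are handled correctly. The paper differs in two cosmetic respects: it uses a smooth cutoff $\tau$ rather than your hard cutoff, and it builds a \emph{hole} into the kernel, setting $J^\eta(t)=t^p\eta(4t)(1-\ca_{[0,\delta]}(t))$, precisely to avoid the short-range mixed pairs.

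There is, however, a genuine gap in your treatment of Case~4 (the mixed case $x\in\Omega$, $y\in C_-$). You assert the reverse comparison $|x-y|\le C\,|x-R(y)|$ and justify it by ``near-flatness'', but in the flat model this inequality is simply false: take $x=(x',a)$ with $a>0$ and $y=(x',-a)$; then $R(y)=(x',a)=x$, so $|x-R(y)|=0$ while $|x-y|=2a>0$. (Your last sentence in fact argues the \emph{opposite} inequality $|x-R(y)|\le|x-y|$, which is correct but useless here.) Consequently your $K$-kernel bound, which requires $|x-y|\in[\tfrac12|x-R(y)|,\,2|x-R(y)|]$, does not hold pointwise, and the change-of-variables estimate for Case~4 collapses for $p>1$.

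The fix is easy and in the spirit of your own Case~5: bound the mixed term directly by $C\|u_\e\|_\infty$. If $x\in\Omega$, $y\in C_-$ and $|x-y|\le s_*\e/4$, then $x$ lies in the layer $\{0<d_\Omega<s_*\e/4\}$ of measure $\lesssim\e$, and
\[
\int_{\{x:\,0<d_\Omega(x)<s_*\e/4\}}\int_{B(x,s_*\e/4)}\frac{2\|u_\e\|_\infty}{|x-y|}\,\frac{c_0}{\e^{d}}\Big(\frac{|x-y|}{\e}\Big)^{p}\,\d y\,\d x
\;\lesssim\;\|u_\e\|_\infty\,\e^{-d-p}\cdot\e\cdot\e^{d+p-1}=C\|u_\e\|_\infty,
\]
using $\int_{B(0,r)}|z|^{p-1}\,\d z\approx r^{d+p-1}$ (finite since $d\ge2$, $p\ge1$). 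This replaces the faulty reflection comparison and completes your argument; the paper's hole-in-the-kernel device is an alternative route to the same end.
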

\begin{proof}
Since $\Omega$ has $C^2$ boundary we can find $\delta>0$ for which the projection operator $x\rightarrow Px\in \Omega$ is well defined on $U:=\{x\in \R^d \ | \ d(x,\Omega)\leq \delta\}$ and satisfies
	\[
	|x-Px|=d(x,\Omega).
	\]
We moreover consider a smooth cut off function $\tau(s)\leq 1$, such that $\tau(s)=1$ for $s\leq \delta/8$ and $\tau(s)=0$ for $s\geq \delta/4$ and we consider the reflection $Rx:=2Px-x$. Set also
\begin{align*}
W&:=\{z\in \R^d\setminus \Omega \ | \ d(x,\Omega)<\delta/4 \}\\
V&:=\{z\in \R^d\setminus \Omega \ | \ d(x,\Omega)< \delta/8 \}.
\end{align*}
It has been shown in the proof of \cite[Lemma 4.4]{GTS16} that 
	\begin{align}
	\frac{1}{4}|x-y|\leq |Rx-Ry|\leq 4 |x-y| &\ \ \  \text{for all $x,y\in W$}\label{min 1};\\
	|Rx- y|\leq 2|x-y| & \ \ \ \text{for all $x\in W$, $y\in \Omega$}\label{min 2};
	\end{align} 
Since $\eta$ can be extended continuously at $0$ with $\eta(0)>0$, up to decreasing the value of $\delta$, we can also guarantee that
	\[
	J^{\eta}(t):=t^p\eta\left(4t\right)\left(1-\ca_{[0,\delta]}(t)\right)
	\] 
is such that $\{t \ | \ J^{\eta}(t)>c\}$ has not empty interior for some $c>0$.
Notice also that 
	\begin{equation}\label{min 3}
	|Rx- y|\geq \frac{3}{4}|x-y| \ \ \text{for $x,y\in W$, $|x-y|\geq \delta$}.
	\end{equation}
In the light of this fact, we introduce the functions $\tilde{v}_{\e}:=u_{\e}(Rx)$ and $v_{\e}(x):=\tau(|Px-x|)\tilde{v}_{\e}(x)$. Clearly $(i)$ is satisfied. Notice that for $y\in \Omega$ 
	\[
	\frac{|x-y|}{\e}\geq \frac{d(x,\Omega)}{\e}.
	\]
Thus, if $d(x,\Omega)\geq \delta/8$, for $\e$ small enough and thanks to the fact that $\eta$ is compactly supported, we conclude that
	\[
	\eta_{\e}(4|x-y|)=0.
	\]
Consequently
	\begin{align}
	& \int_{(\R^d\setminus \Omega)  \times  \Omega} \frac{|v_{\e}(x)-v_{\e}(y)| }{|x-y|} J_{\e}^{\eta}(|x-y|)\d x \d y\nonumber\\
	&\ =\int_{(\R^d\setminus \Omega)  \times  \Omega} \frac{|v_{\e}(x)-v_{\e}(y)| }{|x-y|} \eta_{\e}(4|x-y|)\left(\frac{|x-y|}{\e}\right)^{p}\left(1-\ca_{[0,\delta)}\left(\frac{|x-y|}{\e}\right)\right)\d x \d y\nonumber\\
	 &\ = \int_{\substack{V \times  \Omega \\ \{|x-y|\geq \delta\e\}}}  \frac{|\tilde{v}_{\e}(x)-\tilde{v}_{\e}(y)| }{|x-y|} \eta_{\e}(4|x-y|)\left(\frac{|x-y|}{\e}\right)^{p}\d x \d y\nonumber\\
	 &\ \leq C\int_{\substack{V \times  \Omega \\ \{|x-y|\geq \delta\e\}}} \frac{|u_{\e}(Rx)-u_{\e}(y)| }{|Rx-y|} \eta_{\e}(4|x-y|)\left(\frac{|Rx-y|}{\e}\right)^{p}\d x \d y\nonumber\\
	 &\ \leq C\int_{V \times  \Omega} \frac{|u_{\e}(Rx)-u_{\e}(y)| }{|Rx-y|} \eta_{\e}(2|Rx-y|)\left(\frac{|Rx-y|}{\e}\right)^{p}\d x \d y\nonumber\\
	 &\ \leq C\int_{\Omega \times  \Omega} \frac{|u_{\e}(z)-u_{\e}(y)| }{|z-y|} \eta_{\e}(2|z-y|)\left(\frac{|z-y|}{\e}\right)^{p}\d z \d y\nonumber\\
	 &\ \leq C\int_{\Omega \times  \Omega} \frac{|u_{\e}(z)-u_{\e}(y)| }{|z-y|} \eta_{\e}(|z-y|)\left(\frac{|z-y|}{\e}\right)^{p}\d z \d y\label{ext1}
\end{align}	 
Where we have exploited \eqref{min 1},  \eqref{min 2}, \eqref{min 3} the change of variable $Rx=z$ and the fact that $R$ is bi-Lipscthiz on $W$. From the other side, for $(x,y)\in W\times W$ we have
		\begin{align*}
	\frac{|v_{\e}(x)-v_{\e}(y)| }{|x-y|}&\leq \frac{\tau(|Px-x|)(|\tilde{v}_{\e}(x)-\tilde{v}_{\e}(y)|)+\tau(|Px-x|)-\tau(|Py-y|)|\tilde{v}_{\e}(y)| }{|x-y|}\\
		&\leq \frac{|u_{\e}(Rx)-u_{\e}(Ry)|}{|x-y|}+\|v_{\e}\|_{\infty}\text{Lip}(\tau)\leq \frac{1}{4}\frac{|u_{\e}(Rx)-u_{\e}(Ry)|}{|Rx-Ry|}+\|v_{\e}\|_{\infty}\text{Lip}(\tau).
\end{align*}	 
Moreover
\begin{align*}
\eta_{\e}(4|x-y|)&\left(\frac{|x-y|}{\e}\right)^{p}\left(1-\ca_{[0,\delta)}\left(\frac{|x-y|}{\e}\right)\right)\\
&\leq \eta_{\e}(4|x-y|)\left(\frac{|x-y|}{\e}\right)^{p}\\
&\leq C \eta_{\e}(|Rx-Ry|)\left(\frac{|Rx-Ry|}{\e}\right)^{p}
\end{align*}
In particular with the same change of variable as above we achieve
	\begin{align}
	 \int_{(\R^d\setminus \Omega)  \times  (\R^d\setminus \Omega) } & \frac{|v_{\e}(x)-v_{\e}(y)| }{|x-y|}J_{\e}^{\eta}(|x-y|)\d x \d y\nonumber\\
	&\leq C\left(1+ \int_{\Omega \times  \Omega} \frac{|u_{\e}(x)-u_{\e}(y)| }{|x-y|} \eta_{\e}(|x-y|)\left(\frac{|x-y|}{\e}\right)^{p}\d x \d y\right)\label{ext2}
	\end{align}
By collecting \eqref{ext1}, \eqref{ext2} and the definition of $v_{\e}$ we prove (ii).
\end{proof}
\begin{proof}[Proof of Theorem \ref{thm : compact}]
Since $\rho$ is always bounded from above and below, without loss of generality we can assume $\rho=1$. Moreover we can always assume that $\eta$ is compactly supported since, by replacing $\eta$ with $\eta\ca_{[0,M]}$ for suitable $M$, we are decreasing the energy. Without loss of generality we may also assume that $\eta$ is supported on $[0,1]$. Moreover, as usual, 
 we will omit the dependence on $n$ of the sequences $\e_n$ and $\ell_n=\|T_n-\text{Id}\|_{\infty}$.\\
\text{}\\
Due to the properties of $\zeta$ we can always find real constants $\zeta_2$ and $c_2>0$ such that
	\begin{equation}
	\begin{cases}
	\frac{\zeta'(0)}{2}  \leq \frac{\zeta(t)}{t} \; & \text{for $t\leq \zeta_2$} \medskip \\ 
	 	 c_2 \leq \zeta(t) & \text{for $t\geq \zeta_2$}.
	\end{cases}
	\end{equation}
Set $\tilde{u}_n(x):=u_n\circ T_n$, where $T_n: \Omega \rightarrow  \{x_1,\ldots,x_n\}$ is the map that transports $\tmu_n$ to $\mu_n$; the measures given by Lemma \ref{lem: improved convergence}. We define
	\begin{equation}
	A_{\e}:=\left\{(x,y)\in \Omega\times \Omega \ \Big{|}\ \e^{1-p+q} \frac{|\tilde{u}_n(x) - \tilde{u}_n(y)|^p}{|T_n(x)-T_n(y)|^q} \geq \zeta_2 \right\}.
	\end{equation}
We immediately see that
	\begin{align*}
	\mathcal{GMS}_{\e,n}(u_n)&\geq\frac{1}{\e} \int_{A_{\e} } \zeta\left(\e^{1+q-p} \frac{|\tilde{u}_n(x) - \tilde{u}_n(y) |^p}{|T_n(x)-T_n(y)|^q}\right)\eta_{\e}(|T_n(x)-T_n(y)|)\d x \d y\\
	&\geq \frac{c_2}{\e} \int_{A_{\e}} \eta_{\e}(|T_n(x)-T_n(y)|)\d x \d y.
	\end{align*}
Moreover since $\eta$ is non increasing and non identically $0$ we can find a positive $r>0$ such that $	\eta(t+r)\ca_{[0,1)}(t)$ is not identically $0$. Set $\bar{\eta}(t):=\eta(t+r)\ca_{[0,1]}(t)$ and notice that $\bar{\eta}$ is still a non increasing kernel, supported on $[0,1-r]$. Since for $\e$ small enough we can always guarantee that
	\[
	\frac{|T_n(x)-T_n(y)|}{\e}\leq \frac{|x-y|}{\e}+r \ \ \ \Rightarrow \ \ \ \eta\left(\frac{|T_n(x)-T_n(y)|}{\e}\right)\geq \bar{\eta}\left(\frac{|x-y|}{\e}\right).
	\] 
We can also infer, for $|x-y|/\e\leq (1-r)$
	\[
 \left(\frac{|x-y|}{\e}\right)^p  \bar{\eta}_{\e}(|x-y|)\leq (1-r)^{p-1} \frac{|x-y|}{\e}\eta_{\e}(|T_n(x)-T_n(y)|).
	\]
Thus we can conclude
	\begin{equation}\label{eqn: first part compactness}
\left(\int_{A_{\e}} \frac{|\tilde{u}_n(x)-\tilde{u}_n(y)|}{|x-y|} \left(\frac{|x-y|}{\e}\right)^p\bar{\eta}_{\e}(|x-y|)\d x \d y\right)^p \leq C\|u_n\|^p_{\infty} \mathcal{GMS}_{\e,n}(u_n)^p
	\end{equation}
for a universal constant $C>0$.
For the remaining part we notice the following thing. On $(\Omega\times \Omega)\setminus A_{\e}$ it holds
	\begin{align*}
	\zeta\left(\e^{1-p+q} \frac{|\tilde{u}_n(x)-\tilde{u}_n(y)|^p}{|T_n(x)-T_n(y)|^q}\right)\geq\e \frac{\zeta'(0)}{2}\frac{|\tilde{u}_n(x)-\tilde{u}_n(y)|^p}{|x-y|^p}\frac{\e^q}{|T_n(x)-T_n(y)|^q} \left(\frac{|x-y|}{\e}\right)^p
	\end{align*}
and for $\e$ small enough we have
	\[
	\frac{\e^q}{|T_n(x)-T_n(y)|^q}\geq \frac{1}{\left(\frac{|x-y|}{\e}+\frac{2\ell}{\e}\right)^q}\geq \frac{1}{\left(\frac{|x-y|}{\e}+1\right)^q}.
	\]
This yields, by recalling that $\bar{\eta}_{\e}(|x-y|)=0$ for $|x-y|\geq (1-r)\e$, that
\begin{align}
\mathcal{GMS}_{\e,n}&(u_n)\geq \frac{\zeta'(0)}{2}\int_{(\Omega\times \Omega)\setminus A_{\e}}  \frac{|\tilde{u}_n(x)-\tilde{u}_n(y)|^p}{|x-y|^p}\frac{\left(\frac{|x-y|}{\e}\right)^p}{\left(\frac{|x-y|}{\e}+1\right)^q}  \bar{\eta}_{\e}(|x-y|)\d x \d y\nonumber\\
&\geq  \Lambda^{1-p} C\left(\int_{(\Omega\times \Omega)\setminus A_{\e}}  \frac{|\tilde{u}_n(x)-\tilde{u}_n(y)|}{|x-y|} \left(\frac{|x-y|}{\e}\right)^p  \bar{\eta}_{\e}(|x-y|)\d x \d y\right)^p
\label{eqn: second part compactness}
\end{align}
where
	\begin{align*}
	\Lambda:=&\int_{(\Omega\times \Omega)\setminus A_{\e}} \left(\frac{|x-y|}{\e}\right)^p  \bar{\eta}_{\e}(|x-y|)\d x \d y\leq \int_{\Omega\times \Omega} \left(\frac{|x-y|}{\e}\right)^p  \bar{\eta}_{\e}(|x-y|)\d x \d y\\
	&\leq 2|\Omega| \int_{\R^d} |\xi|^p \bar{\eta}(|\xi|)\d \xi\leq 2|\Omega| \int_{B_1(0)} |\xi|^p \bar{\eta}(|\xi|)\d \xi<+\infty.
	\end{align*}
By  collecting \eqref{eqn: first part compactness} and \eqref{eqn: second part compactness} we conclude     that
	\begin{equation}\label{eqn:temporary equation}
	\sup_{n\in \N}\left\{\int_{\Omega\times \Omega} \frac{|\tilde{u}_n(x)-\tilde{u}_n(y)|}{|x-y|} \left(\frac{|x-y|}{\e_n}\right)^p  \bar{\eta}_{\e_n}(|x-y|)\d x \d y+\|\tilde{u}_n\|_{\infty}\right\}<+\infty.
	\end{equation}
We now divide the proof in three steps.
\medskip

\noindent
\textbf{Step one:} \textit{$\Omega$ has $C^2$ boundary}. In this case, by applying Lemma \ref{lem:extension} we can find a sequence of $\{v_n\}_{n\in \N}\subset  L^1(\R^d)$ such that $v_n=\tilde{u}_n$ $\L^d$-a.e. on $\Omega$. Moreover, due to \eqref{eqn:temporary equation} there exists a kernel $J^{\bar{\eta}}$ such that $\{\xi \ : \ J^{\bar{\eta}}(|\xi|)\geq c\}$ has not empty interior for some $c>0$ and for which
	\[
	\sup_{n\in \N}\left\{\int_{\R^d\times \R^d} \frac{|v_n(x)-v_n(y)|}{|x-y|} J_{\e_n}^{\bar{\eta}}(|x-y|)\d x \d y+\|v_n\|_{\infty}\right\}<+\infty.
\]	
Then, by applying Proposition \ref{prop: cmp gobbino} we deduce that $\{v_{n}\}_{n\in \N}$ is compact in $L^1(\R^d)$ and thus $\{\tilde{u}_n\}_{n\in \N}$ is compact in $L^1(\Omega)$.
\medskip

\noindent
\textbf{Step two:}  \textit{$\Omega$ has Lipschitz boundary}. Thanks to \cite[Remark 5.3]{ball2017partial} there exists a bi-Lipscthiz map $\Psi:\hat{\Omega}\rightarrow\Omega$ where $\hat{\Omega}$ is a domain with smooth boundary. Consider $\hat{u}_n:=\tilde{u}_n\circ \Psi:\hat{\Omega} \rightarrow \R$. Clearly $\|\hat{u}_n\|_{\infty}\leq \|\tilde{u}_n\|_{\infty}$. Moreover
	\begin{align*}
	\int_{\hat{\Omega}\times \hat{\Omega}} &\frac{|\hat{u}_n(x)-\hat{u}_n(y)|}{|x-y|}  \left(\frac{|x-y|}{\e_n}\right)^p  \bar{\eta}_{\e}(\text{Lip}(\Psi)^{-1}|x-y|)\d x \d y\\
	&\leq C\int_{\Omega\times \Omega} \frac{|\tilde{u}_n(x)-\tilde{u}_n(y)|}{|\Psi(x)-\Psi(y)|} \left(\frac{|\Psi(x)-\Psi(y)|}{\e_n}\right)^p  \bar{\eta}_{\e_n}(\text{Lip}(\Psi)^{-1}|\Psi(x)-\Psi(y)|)\d x \d y\\
		&\leq C\int_{\Omega\times \Omega} \frac{|\tilde{u}_n(x)-\tilde{u}_n(y)|}{|x-y|} \left(\frac{|x-y|}{\e_n}\right)^p  \bar{\eta}_{\e_n}(|x-y|)\d x \d y.
	\end{align*}
By exploiting \eqref{eqn:temporary equation}, Lemma \eqref{lem:extension} and by arguing as in Step one we conclude that $\{\hat{u}_n\}_{n\in \N}$ is compact in $L^1(\hat{\Omega})$. Since $\Psi$ is bi-Lipschitz, a simple change of variable shows that $\{\tilde{u}_n\}_{n\in \N}$ is compact in $L^1(\Omega)$.
\medskip

\noindent
\textbf{Step three:}  \textit{Compactness of $(\mu_n,u_n)$ in $TL^1$}. Thanks to Steps one and two we obtained that $\{\tilde{u}_n:=u_n\circ T_n\}_{n\in \N}$ is compact in $L^1(\Omega)$ and converges to some $u$ up to a subsequence. Thanks to Lemma \ref{lem:improved convergence TLp} we deduce that $(\tilde{\mu}_n, u_n\circ T_n)\rightarrow (\mu,u)$ in $TL^1$ as well. In particular, since $d_{\infty}(\mu_n,\tilde{\mu}_n)\rightarrow 0$, we have
	\begin{align*}
	d_{TL^1}((\mu_n,u_n),(\mu,u))&\leq d_{TL^1}((\mu_n,u_n),(\tilde{\mu}_n,u_n\circ T_n)+d_{TL^1}( (\tilde{\mu}_n,u_n\circ T_n),(\mu,u))\\
	&=d_{\infty}(\mu_n,\tilde{\mu}_n)+d_{TL^1}( (\tilde{\mu}_n, u_n\circ T_n),(\mu,u))\rightarrow 0.
	\end{align*}
\end{proof}
\section{Proofs of corollaries}
We now prove the Corollary \ref{cor:nonoise}.
\begin{proof}
Let $u_n : V_n \to \R$ be a sequence of functions such that $(\mu_n, u_n)$ converges in $TL^2$ towards 
$(\mu,u)$. Let $\overline T_n$ be the $\infty$-optimal transport map between $\mu$ and $\mu_n$. 
To show the liminf inequality needed for $\Gamma$-convergence,  it suffices to establish the convergence of the fidelity term, as the other terms are same as in Theorem \ref{thm main: G conv}.
Note that
\[ \frac{1}{n} \sum_{i=1}^n |u_n(x_i) - f(x_i)|^2 = \int_\Omega |u_n \circ \overline T_n - f \circ \overline T_n|^2 \rho(x) dx. \]
We claim that since $f$ is piecewise continuous $f \circ \overline T_n$ converges to $f$ in $L^2(\mu)$. Namely let $J_f$ be the set of discontinuities of $f$ and let $\tilde J_f = J_f \cup \partial \Omega$. Let
$\tilde J_{f, \delta} = \{ x \in \Omega \::\: d(x, \tilde J_f)< \delta\}$. Since $\H^{d-1}(\tilde J_f \cup \partial \Omega) < \infty$, $\mu(\tilde J_{f, \delta}) \to 0$ as $\delta \to 0$. To establish the convergence let $\e>0$. Let $\delta$ be such that $4 \|f \|_{L^\infty} \mu(\tilde J_{f, \delta}) < \e$. Let $n$ be so large that
$\| \overline T_n - I \|_{L^\infty}  < \frac12 \delta$. Since $f $ is uniformly continuous on $\Omega \setminus \tilde J_{f, \delta/2}$  and $\| \overline T_n - I \|_{L^\infty} \to 0$ as $n \to \infty$, $\, f \circ \overline T_n$ converges uniformly to $f$ on $\Omega \setminus \tilde J_{f, \delta}$.  Therefore for all $n$ large enough
\[ \int_\Omega |f - f \circ \overline T_n|^2 \rho(x) dx \leq 2 \mu(\tilde J_{f, \delta}|)  \| f\|_{L^\infty} + 
\int_{\Omega \setminus \tilde J_{f, \delta}} |f - f \circ \overline T_n|^2 \rho(x) dx  < \frac{\e}{2} + \frac{\e}{2}.\]
Since $(\mu_n,u_n) \to (\mu,u)$ in $TL^2$, and $\overline T_n$ is a stagnating sequence of transport maps we conclude that  we conclude that $u_n \circ \overline T_n \to u$ in $L^2(\mu)$. Combining with the convergence for $f \circ \overline T_n$ obtained above we conclude that
 $ \int_\Omega |u_n \circ \overline T_n - f \circ \overline T_n|^2 \rho(x) dx \to  \int_\Omega |u - f|^2 \rho(x) dx$ as $n \to \infty$. 

Establishing the limsup inequality is straightforward by using the same approximation argument and recovery sequence as in the proof of  Theorem \ref{thm main: G conv}. 

To establish the compactness of the sequence of minimizers, let $u_n$ be a minimizer of $\mathcal{GMS}_{f,\e,n}$. By truncation it is immediate that $\|u_n \|_{L^\infty(\mu_n)} \leq \|f\|_{L^\infty(\mu)}$. Therefore  the compactness claim of the  Theorem \ref{thm : compact} implies that $(\mu_n,u_n)$ converges along a subsequence in $TL^1$ to $(\mu,u)$ for some $u \in L^\infty(\mu)$. The boundedness in $L^\infty$ implies, via interpolation, that the convergence is in $TL^2$. 
The fact that $u$ is a minimizer follows from  $\Gamma$-convergence. 
\end{proof}

We now prove the Corollary \ref{cor:noise}.
\begin{proof}
Let us first establish the liminf inequality. 
Let $u_n : V_n \to \R$ be a sequence of functions such that $(\mu_n, u_n)$ converges in $TL^2$ towards 
$(\mu,u)$. 
Given the results of  Theorem \ref{thm main: G conv} and assumptions on $\beta$,
it suffices to show that the fidelity term converges, that is that
\begin{equation} \label{temp12}
 \frac{1}{n} \sum_{i=1}^n |u_n(x_i) - f(x_i) - y_i|^2 \to \iint_{\Omega \times \R} | u(x) - f(x) - y|^2 \rho(x) dx d \beta(y)
\end{equation}
as $n \to \infty$. 
Note that $ \frac{1}{n} \sum_{i=1}^n |u_n(x_i) - f(x_i) |^2 \to \int_\Omega | u(x) - f(x)|^2 \rho(x) dx$ follows from the proof of Corollary \ref{cor:nonoise}. The fact that $\frac{1}{n} \sum_{i=1}^n \delta_{y_i}$ weakly converges to $\beta$ follows from Glivenko--Cantelli lemma. Due to boundedness of moments we conclude that $\frac{1}{n} \sum_{i=1}^n \delta_{y_i}$ converges to $\beta$ is $q-$Wasserstein distance for all $q\geq 1$. Using the boundedness of $\beta$ we conclude that $\frac{1}{n} \sum_{i=1}^n |y_i|^2$ converges to  $\int_\R y^2 d\beta(y) =  \iint_{\Omega \times \R} y^2 \rho(x) dx d\beta(y)$ since $\int_\Omega \rho(x) dx =1$.  Note that $\gamma_n := \frac{1}{n} \sum_{i=1}^n \delta_{(x_i, y_i)}$ converges to $\gamma:= \mu \times \beta$ in Wasserstein distance, again due to Glivenko-Cantelli lemma. Let $\kappa_n$ be the optimal transport plan for the Wasserstain distance between $\gamma$ and $\gamma_n$. Let $\pi_n:= {\Pi_{1,3}}_\sharp \kappa_n$ where $\Pi_{1,3}$ is the projection to the first and the third variable. By definition $\pi_n$ is a stagnating sequence of transport plans. Therefore, by Proposition 3.12 of \cite{GTS16}, since $(\mu_n, u_n - f)\overset{{TL^2}}{\longrightarrow} 
(\mu, u-f)$ by the proof of Corollary \ref{cor:nonoise},
$ \iiiint |x- \tilde x|^2 + |u_n(\tilde x) - f(\tilde x) - u(x) + f(x)|^2 d \kappa_n(x,y, \tilde x, \tilde y) \to 0$ as $n \to \infty$. Thus $(\gamma_n, u_n - f)\overset{{TL^2}}{\longrightarrow} 
(\gamma, u-f)$ as $n \to \infty$. Similarly $(\gamma_n, y)\overset{{TL^2}}{\longrightarrow} 
(\gamma, y)$ as $n \to \infty$. Consequently $(\gamma_n, (u_n - f) y) \overset{{TL^1}}{\longrightarrow} 
(\gamma, (u-f)y)$ as $n \to \infty$. Thus  
\begin{align*}
 \frac{1}{n} \sum_{i=1}^n & (u_n(x_i) - f(x_i))y_i -  \iint_{\Omega \times \R}  (u(x) - f(x)) y\,  \rho(x) dx d\beta(y) \\
 & =  \iiiint (u_n(\tilde x) - f(\tilde x)) \tilde y - (u(x) - f(x)) y \,  d \kappa_n(x,y, \tilde x, \tilde y) \to 0 \quad \te{ as } n \to \infty.
\end{align*}
 Combining with the limits above establishes \eqref{temp12}. 
  
The proofs of limsup inequality, compactness and the converge of minimizers are as before.  
\end{proof}

\section{Numerical algorithm and experiments} \label{sec:numerics}

%

Here we desribe an efficient numerical algorithm for computing the, approximate, minimizers of the graph Mumford-Shah functional and illustrate its behavior on a real world set of data. We note that similar functionals were minimized using the ADMM algorithm by Hallac,  Leskovec, and Boyd by \cite{hallac2015}. Here we 
minimize \eqref{eq:GMSf}, where $\zeta$ is non convex. We use a standard
``Iterated Reweighted Least Square'' approach which in this context dates back at least to~\cite{GemanReynolds1992}
(\textit{cf}~also the implementation in~\cite{chambolle1999finite}). In our case, the idea is to perform several iteration,  linearizing each time the problem with respect to $\zeta$ around
the previous value.

This can be presented as follows: we assume that $\zeta$ is  concave, with $\zeta'(0)=1$ and $\zeta(+\infty)=1$, for
instance $\zeta(t) = t/(1+t)$ for $t>0$. Then, using the Legendre transform, one can write for $t>0$
$\zeta(t) = \min_{\zz\in [0,1]} \zz t + \Psi(\zz)$ for some convex function $\Psi$. (One has $\zeta(t) = -\Psi^*(-t)$ and
$\Psi(\zz) = \max_t \zeta(t)-t\zz$, where $\Psi^*$ denotes the classical convex conjugate of $\Psi$.)
The minimum (if unconstrained) is reached at $\zz$ which solves $t+\Psi'(\zz)=0$, hence $\zz = (\Psi^*)'(-t) = \zeta'(t)$.

We consider the edge weights given by kernel $\eta(s) = e^{- \frac{s^2}{2 \sigma^2}}$ where $\sigma$ is a parameter that can be tuned. 
Minimizing~\eqref{eq:GMSf} is equivalent to solving:
\begin{equation} \label{eq:price_num}
  \min_{u,\zz} \sum_{i=1}^n |u(x_i) - f_i|^2 + \frac{1}{\lambda \e n}\
  \sum_{i,j=1}^n \left( \zz_{i,j} \frac{1}{\e}  |u(x_i) - u(x_j)|^2 + \Psi(\zz_{i,j}) \right)   \frac{1}{\e^d} \exp \left(-\frac{|x_i-x_j|^2}{2 \sigma^2 \e^2} \right)
\end{equation}
where the new variable $(\zz_{i,j})$ is defined on the active edges.
This is computed by alternatively minimizing the problem with respect to $u$ and $\zz$: in $u$, the problem is quadratic
and can be minimized efficiently, depending on the graph, by inverting the graph Laplacian (plus identity) or a
conjugate gradient method. In $\zz$, the solution is explicitly given by
\[
  \zz_{i,j} =   \zeta' \left(\frac{ |u(x_i) - u(x_j)|^2}{\e}  \right).
\]
In practice, we have implemented the following cases:
\begin{align}
\zeta(t) & = \frac{2}{\pi}\arctan\left(\frac{\pi t}{2}\right),\hspace*{-40pt}  & \hspace*{-40pt}  \zeta'(t) & = \frac{1}{1+\frac{\pi^2t^2}{4}} \label{eq:zetaMS} \\
  \zeta(t) & = \sqrt{\delta^2+t} , \hspace*{-40pt}  & \hspace*{-40pt}  \zeta'(t) & = \frac{1}{2\sqrt{\delta^2+t}},\label{eq:zetaTV}  \\
    \zeta(t) & = t , \hspace*{-40pt}  & \hspace*{-40pt}  \zeta'(t) & = 1,\label{eq:zetaLap} 
\end{align}
The choice~\eqref{eq:zetaTV} leads as $\delta\to 0$
to a consistent approximation of the graph Total Variation which was
first proposed in~\cite{VogelOman1994}.
The choice $\zeta(t) = t$ corresponds to regularization by Dirichlet energy, which corresponds to (unnormalized) graph Laplacian.
Our implementation is available on bitbucket: \url{https://bitbucket.org/AntoninCham/ms_on_graphs/ } 

\subsection{A synthetic example} \label{ex:synth}
We consider denoising and detecting edges in the signal given by piecewise linear function $u$ on domain $[0,1]^2$, shown on Figure \ref{fig:edgeA}. The signal is sampled at 10,000 points, $X_{10,000}$, and corrupted by Gaussian noise with variance $0.2$. We build the graph using $\eta$ as in \eqref{eq:price_num} with $\sigma=5$ and $\e=0.0225$  and with the maximum number of neighbors $k=8$. We considered three models for denoising and edge detection given by $\zeta$ in \eqref{eq:zetaMS}--\eqref{eq:zetaLap}. Namely on Figure \ref{fig:edgeC}
we display the computed  minimizer of the  graph Mumford-Shah for $\zeta$ given by \eqref{eq:zetaMS} and $\lambda=162$. 
On Figure \ref{fig:edgeD}
we display the minimizer of the approximation of the graph TV functional  for $\zeta$ given by \eqref{eq:zetaTV}  with $\delta=0.001$ and $\lambda=438$. 
On Figure \ref{fig:edgeE}
we show the minimizer of the  functional with dirichlet regularization, corresponding to  $\zeta$ given by \eqref{eq:zetaLap} and $\lambda=248$. 
For comparison, for each of the models we display the result for parameter $\lambda$ which minimizes the $L^1(\mu_{10,000})$ error between the minimizer and the clean signal $u$ restricted to $X_{10,000}$.  
The errors observed for optimal lambdas were $\| u_{MS}-u\|_{L^1} = 0.0258$, $\| u_{MS}-u\|_{L^1} = 0.0297$, and $\| u_{L}-u\|_{L^1} = 0.0392$. We note that the recovery by Mumford-Shah is somewhat better than for graph TV. We think that the main reason is that the graph TV tends to decrease contrast (as is well known in image processing, see page 30 of \cite{CCCNP10})  while the Mumford-Shah does not have this bias. 

We also observe to what extent the minimizers recover the edges of the domains by labeling the graph edges that have a relatively large difference between values at the nodes. These are shown in red on the plots. The critical jump size was set manually for visually the best results for each model. We note that Mumford-Shah and TV give similar results, while the Laplacian smoothing blurs the edges as expected. Taking the difference between the minimizers $u_{MS}-u_{TV}$ shows, on Figure \ref{fig:edgeF} that indeed jumps across the edges are typically larger for the Mumford-Shah minimizers that for Total-Variation regularization.

\begin{figure}[ht!]
\centering
\begin{subfigure}[t]{0.47\textwidth}
\centering
\includegraphics[width=0.95\textwidth]{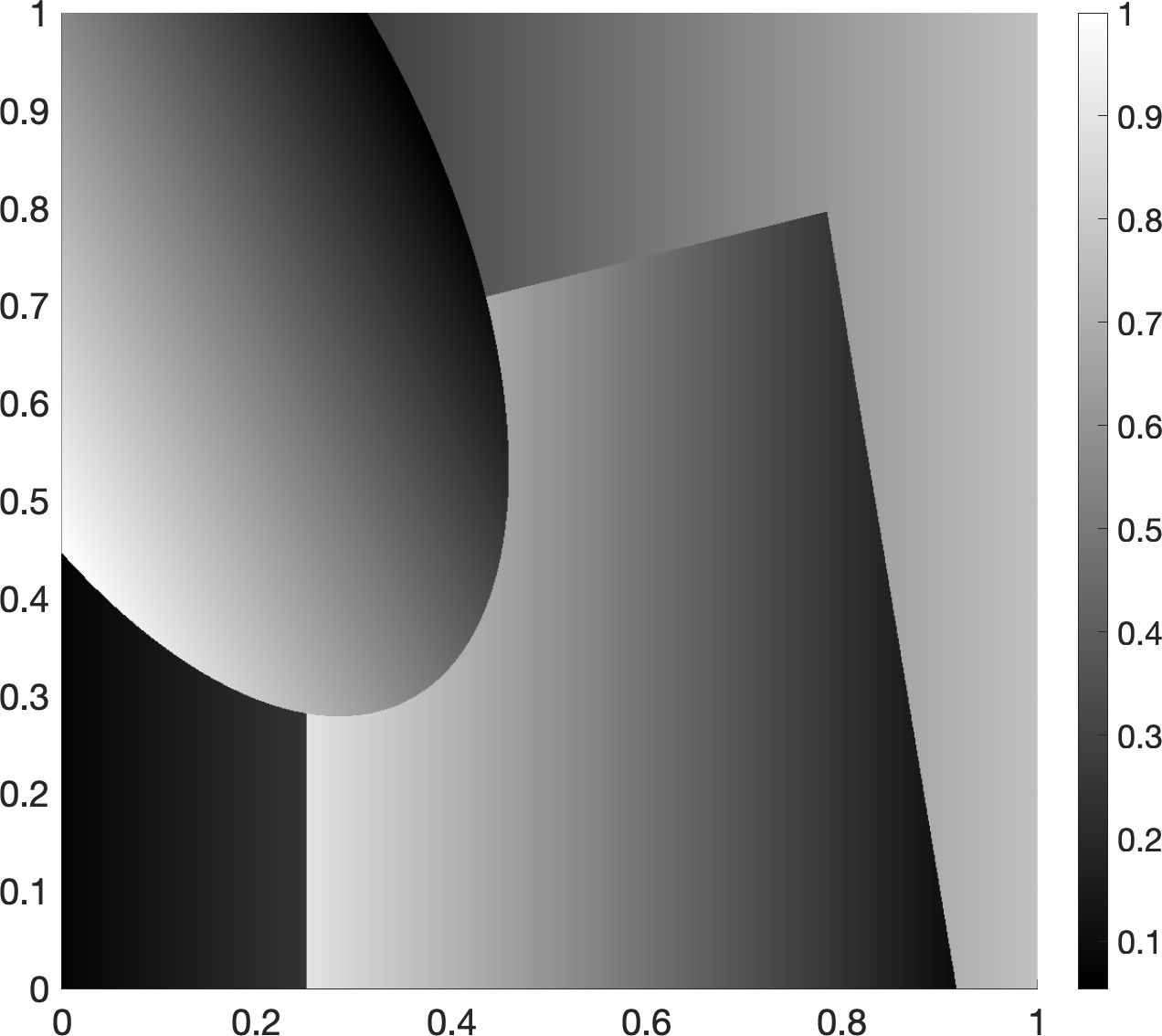}
\caption{Noiseless function.} 
\label{fig:edgeA}
\end{subfigure}
\hspace*{0.03\textwidth}
\begin{subfigure}[t]{0.47\textwidth}
\centering
\includegraphics[width=0.86\textwidth]{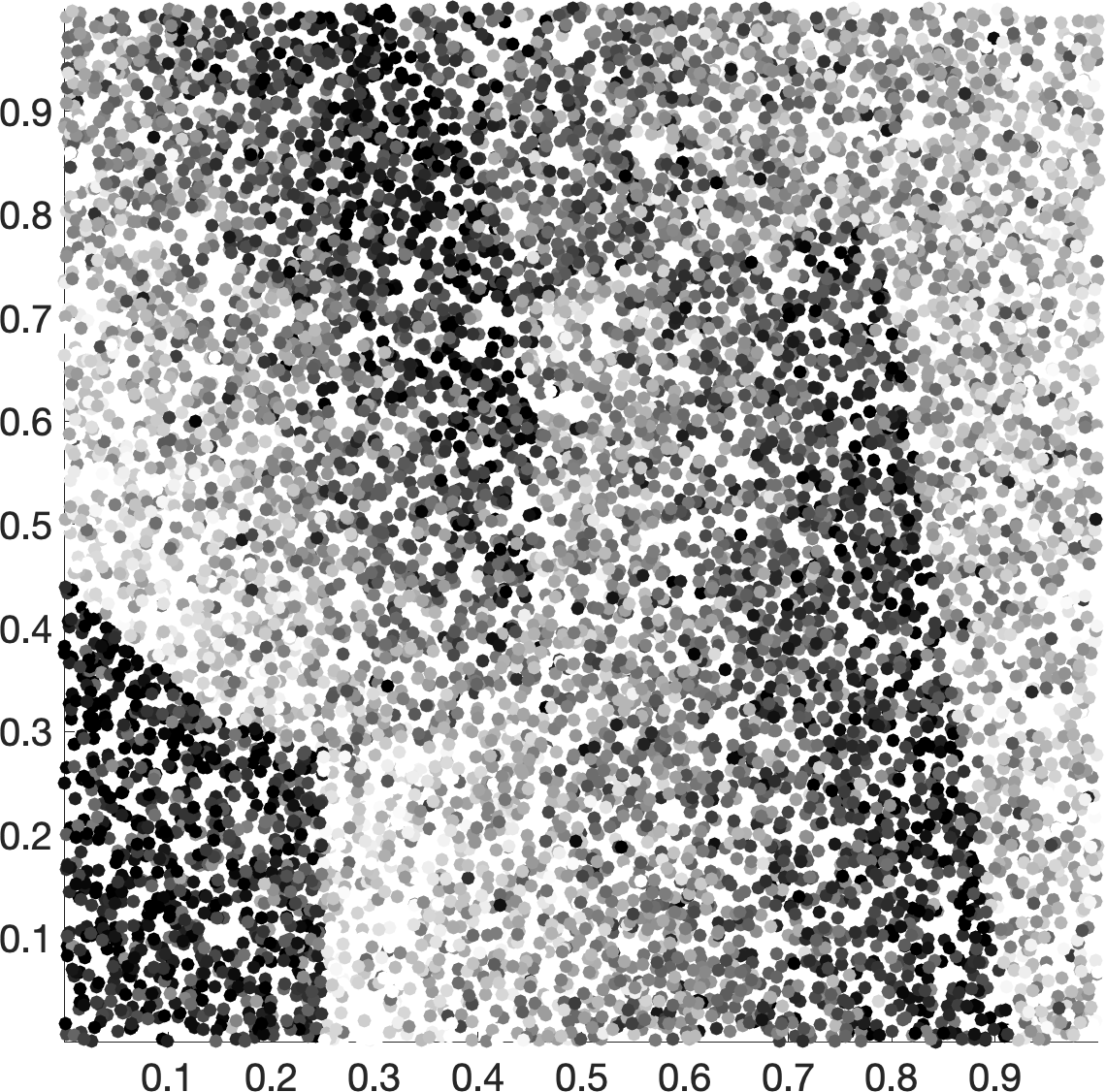}
\caption{
Function sampled at 10,000 random points and corrupted by gaussian noise with $\sigma=0.2$.
}
\end{subfigure}

\begin{subfigure}[t]{0.47\textwidth}
\centering
\includegraphics[width=0.86\textwidth]{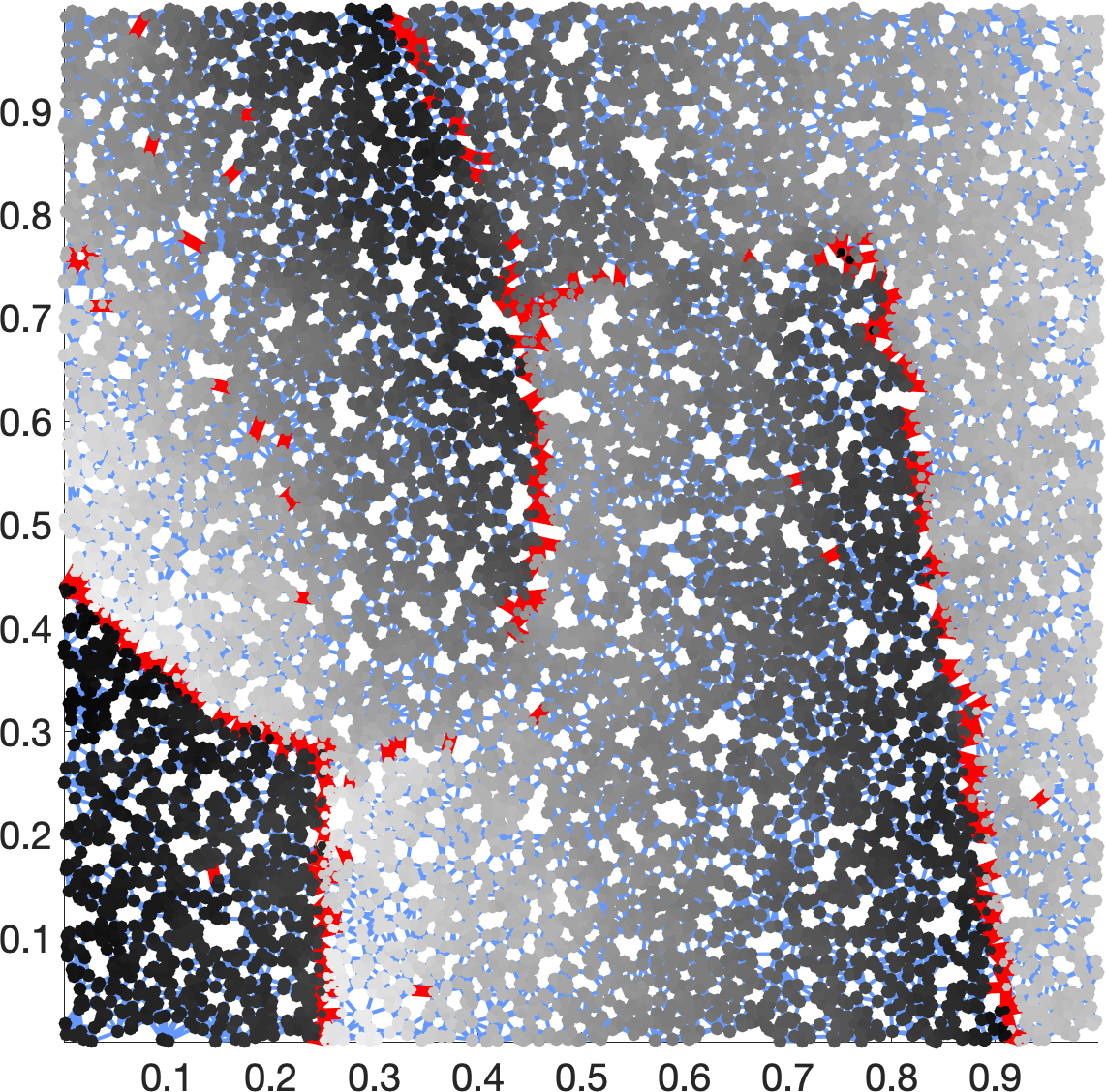}
\caption{Minimizer $u_{MS}$ of \eqref{eq:GMSf}   for $\lambda=162$. Edges with jump over $0.075$ are red. }
\label{fig:edgeC}
\end{subfigure}
\hspace*{0.03\textwidth}
\begin{subfigure}[t]{0.47\textwidth}
\centering
\includegraphics[width=0.86\textwidth]{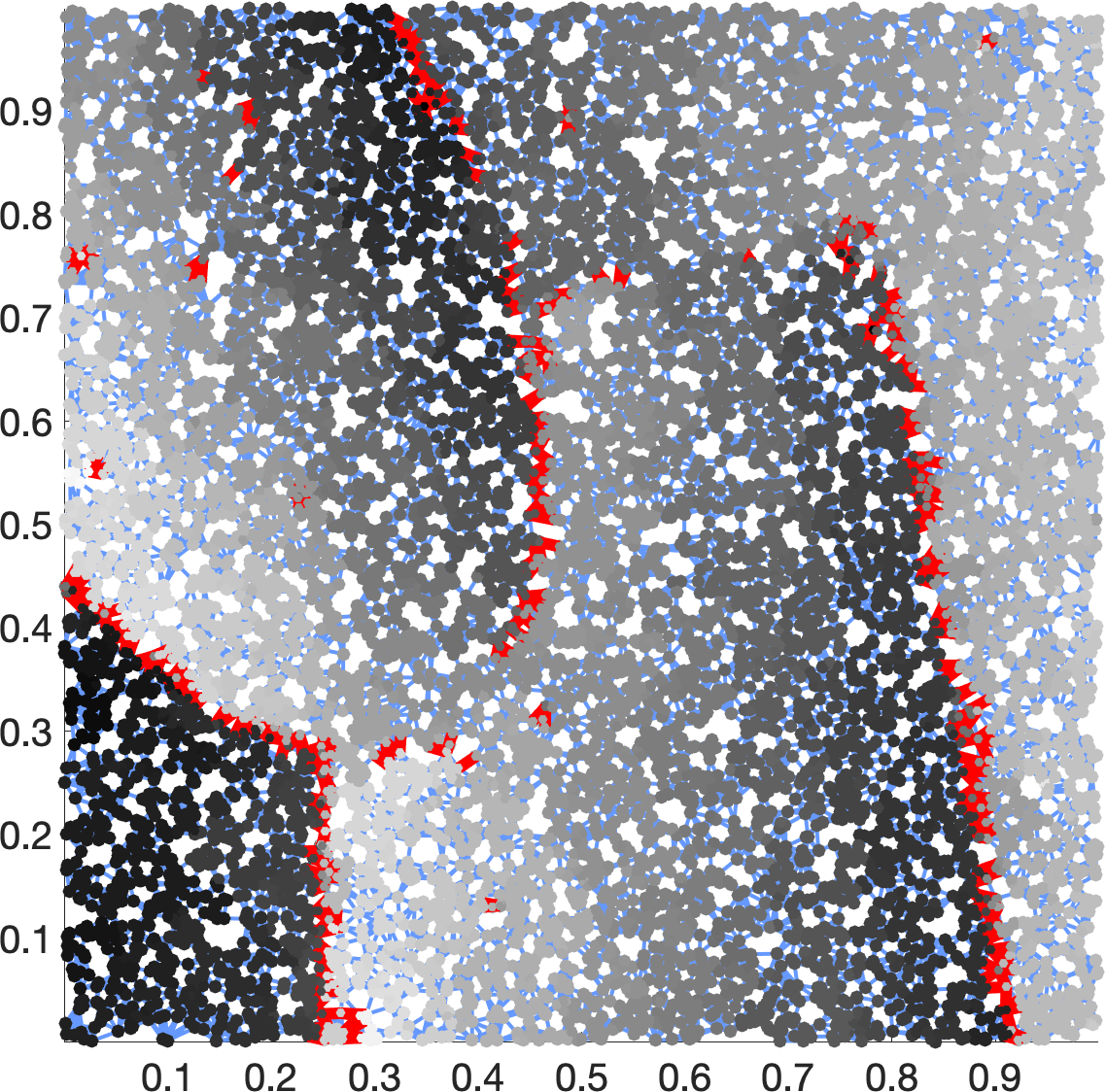}
\caption{Minimizer $u_{TV}$  of the graph TV functional  for $\lambda=438$. Edges with jump over $0.14$ are red. 
}
\label{fig:edgeD}
\end{subfigure}
\begin{subfigure}[t]{0.47\textwidth}
\centering
\includegraphics[width=0.86\textwidth]{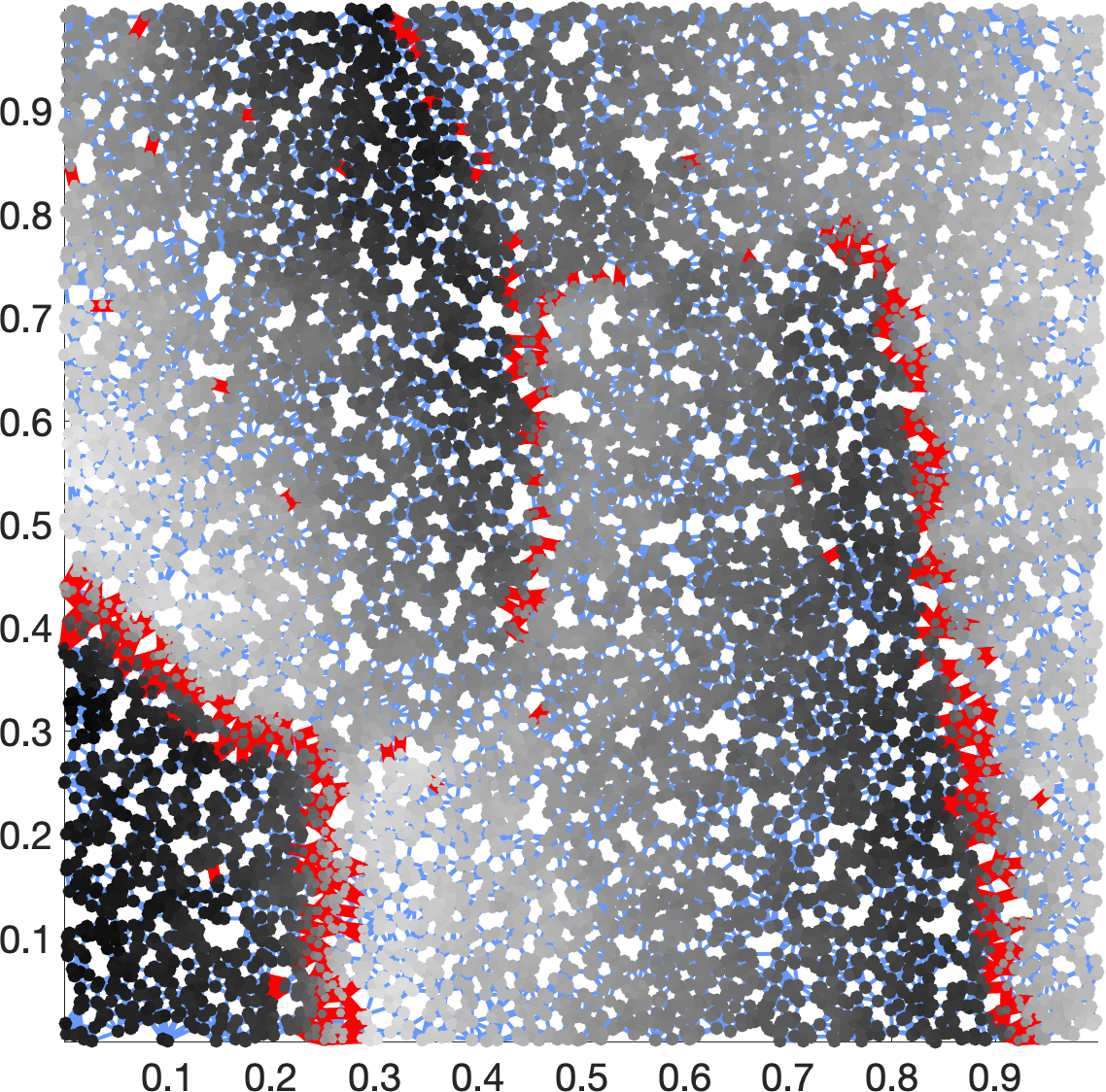}
\caption{Minimizer $u_L$ for $\zeta$ given by \eqref{eq:zetaLap} with $\lambda=248$. Edges with jump over $0.09$ are red. }
\label{fig:edgeE}
\end{subfigure}
\hspace*{0.03\textwidth}
\begin{subfigure}[t]{0.47\textwidth}
\centering
\includegraphics[width=0.95\textwidth]{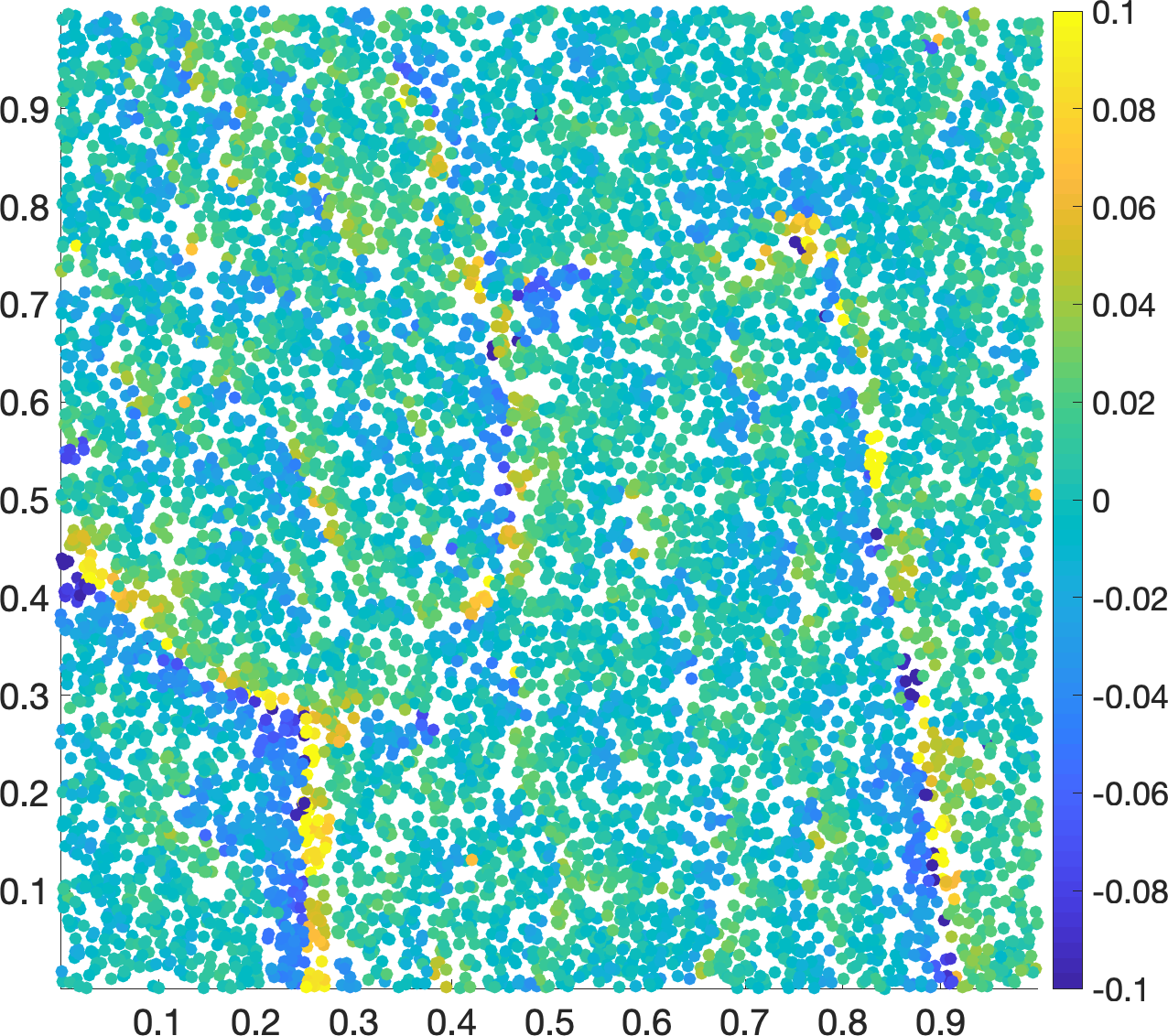}
\caption{
The difference
$u_{MS}-u_{TV}$ tends to be  positive on the upper side of jumps and negative on the lower side.
}
\label{fig:edgeF}
\end{subfigure}
\caption{
Denoising (regression) and edge detection
\label{fig:edge}
}
\end{figure}

\subsection{Denoising housing prices}

Here we present an example of minimization of Mumford-Shah functionals on graphs arising from real-world data samples.  This example is given as an illustration of the nature of minimizers.

We consider denoising the real estate prices  in King County, WA. The housing prices in the period May 2014 to May 2015 are obtained  from the Kaggle website:
\url{https://www.kaggle.com/harlfoxem/housesalesprediction}. 

We  removed the geographical outliers (east of longitude $-121.68^o$) and data rows missing square footage. 
The recorded price per square foot is shown on the left. This left 21594 usable records. 
The maximum price per square foot was \$810.14. We normalized the input prices per square foot by dividing by the maximal price. On Figure \ref{fig:housing2} we present the computed minimizers of
the graph Mumford--Shah functional with $\e = 0.04$, $\lambda=14$, and $\sigma=1$. We also allow one to limit the maximal degree of a vertex considered, which we set to $k=15$. 
On a 2018 Macbook Pro, the computation takes 31s, including the construction of the graph. 
The denoised data allow one to visualize by how much the typical price per square foot depends on the location. 
\nc

\begin{figure}[ht!]
\centering
\begin{subfigure}[t]{0.47\textwidth}
\centering
\includegraphics[width=0.99\textwidth]{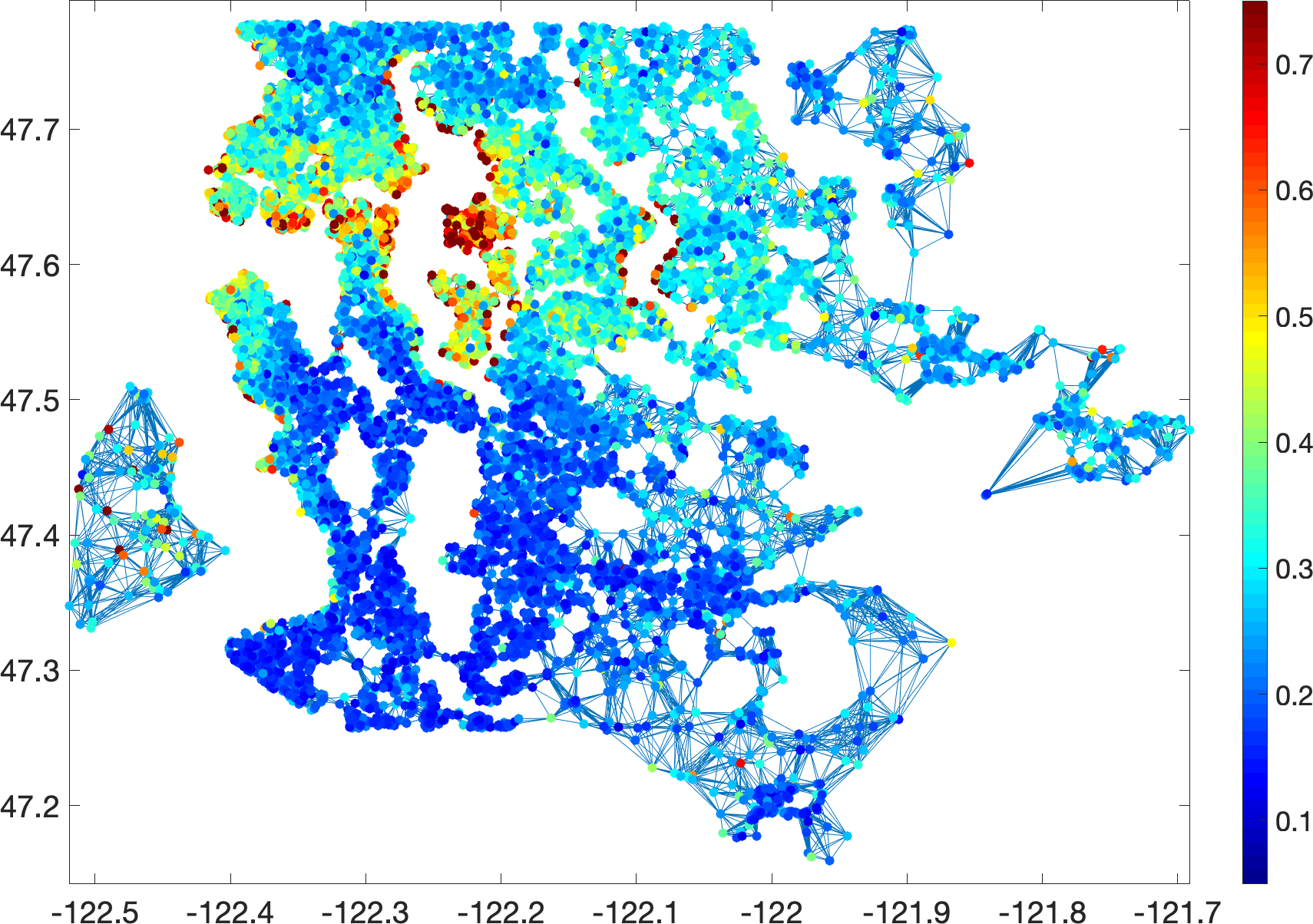}
\caption{The recorded price per square foot.}
\end{subfigure}
\hspace*{0.03\textwidth}
\begin{subfigure}[t]{0.47\textwidth}
\centering
\includegraphics[width=0.99\textwidth]{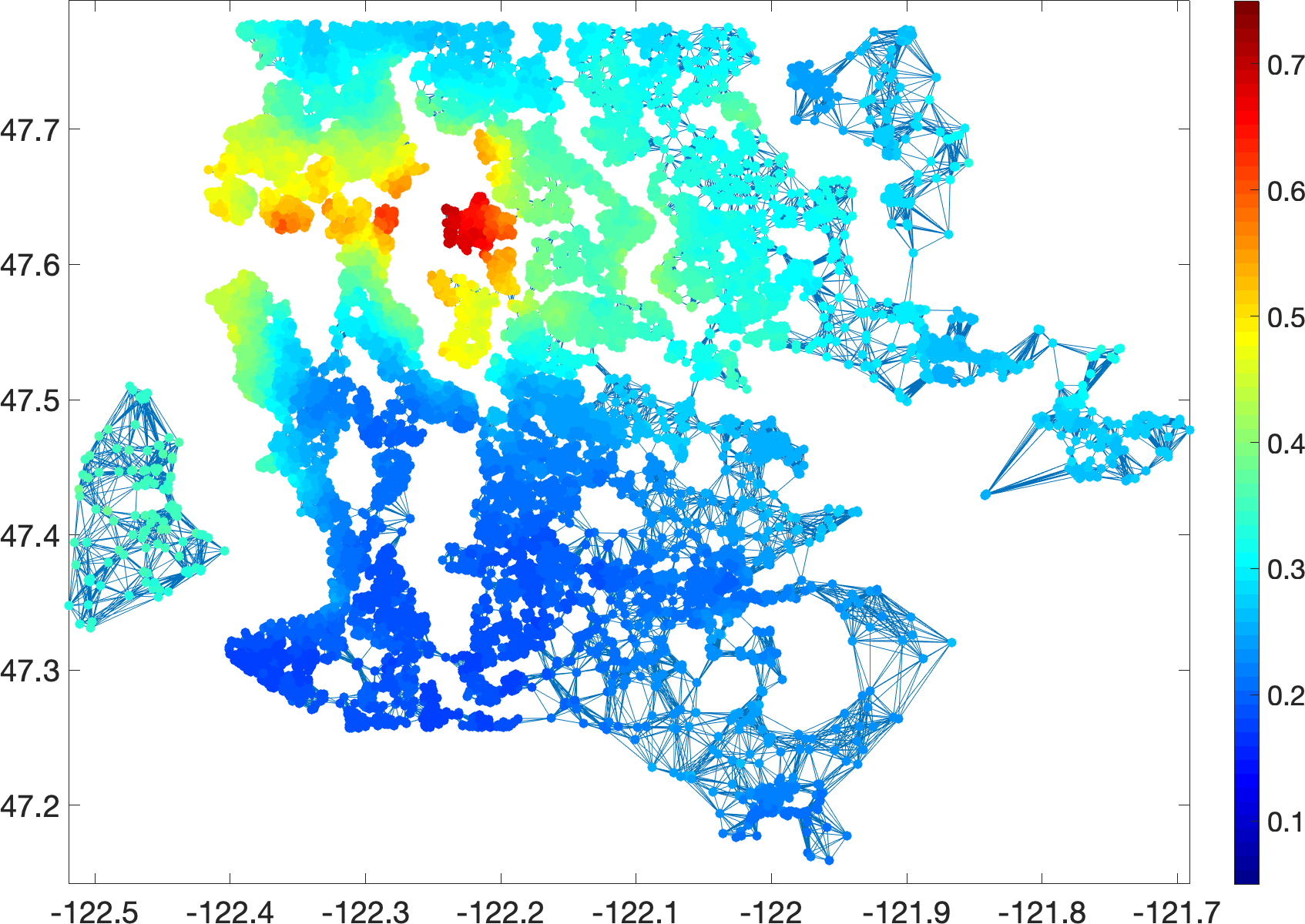}
\caption{
The minimizer of the graph Mumford--Shah functional computed.
}
\end{subfigure}

\begin{subfigure}[t]{0.47\textwidth}
\centering
\includegraphics[width=0.99\textwidth]{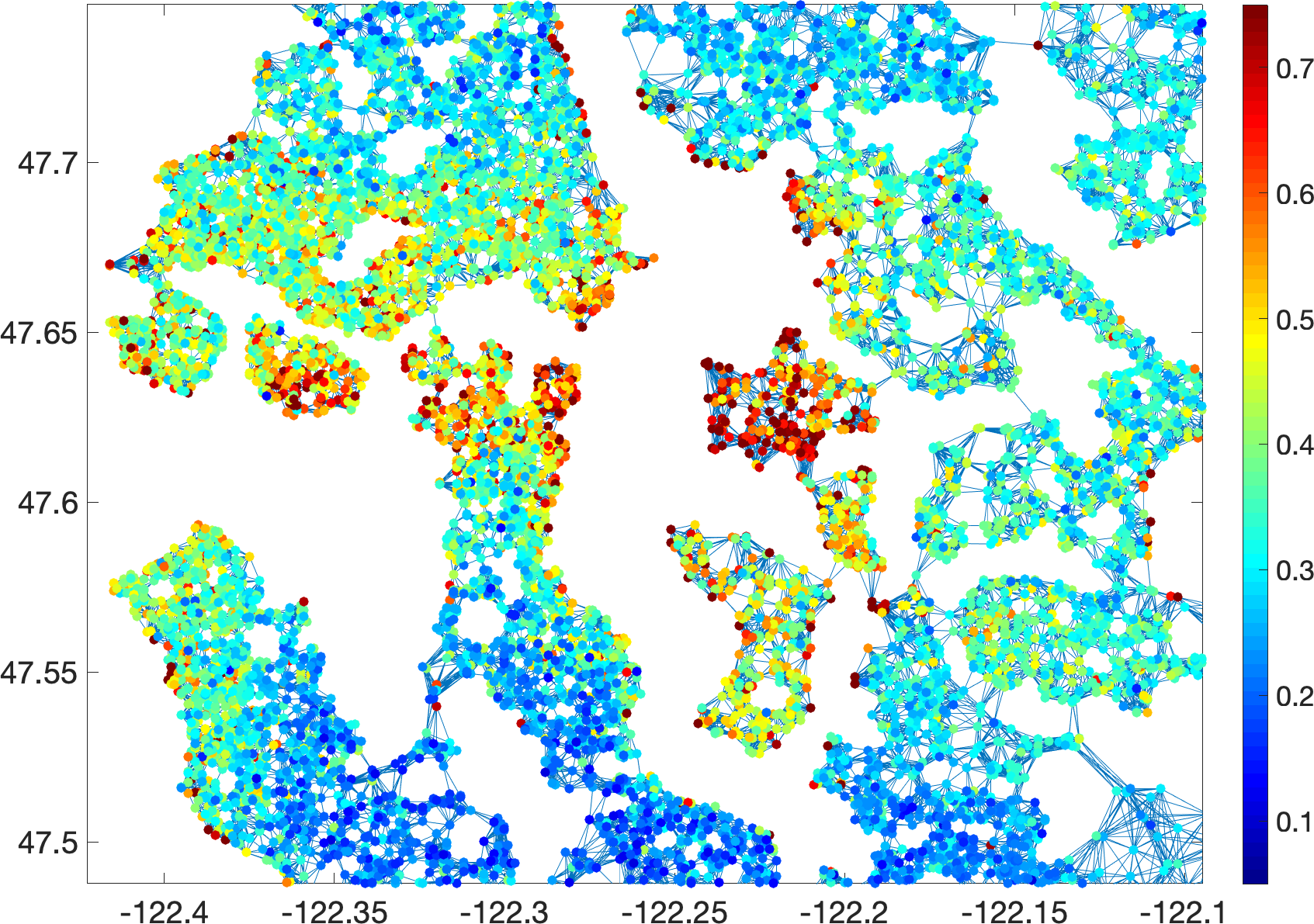}
\caption{Detail of the image above}
\end{subfigure}
\hspace*{0.03\textwidth}
\begin{subfigure}[t]{0.47\textwidth}
\centering
\includegraphics[width=0.99\textwidth]{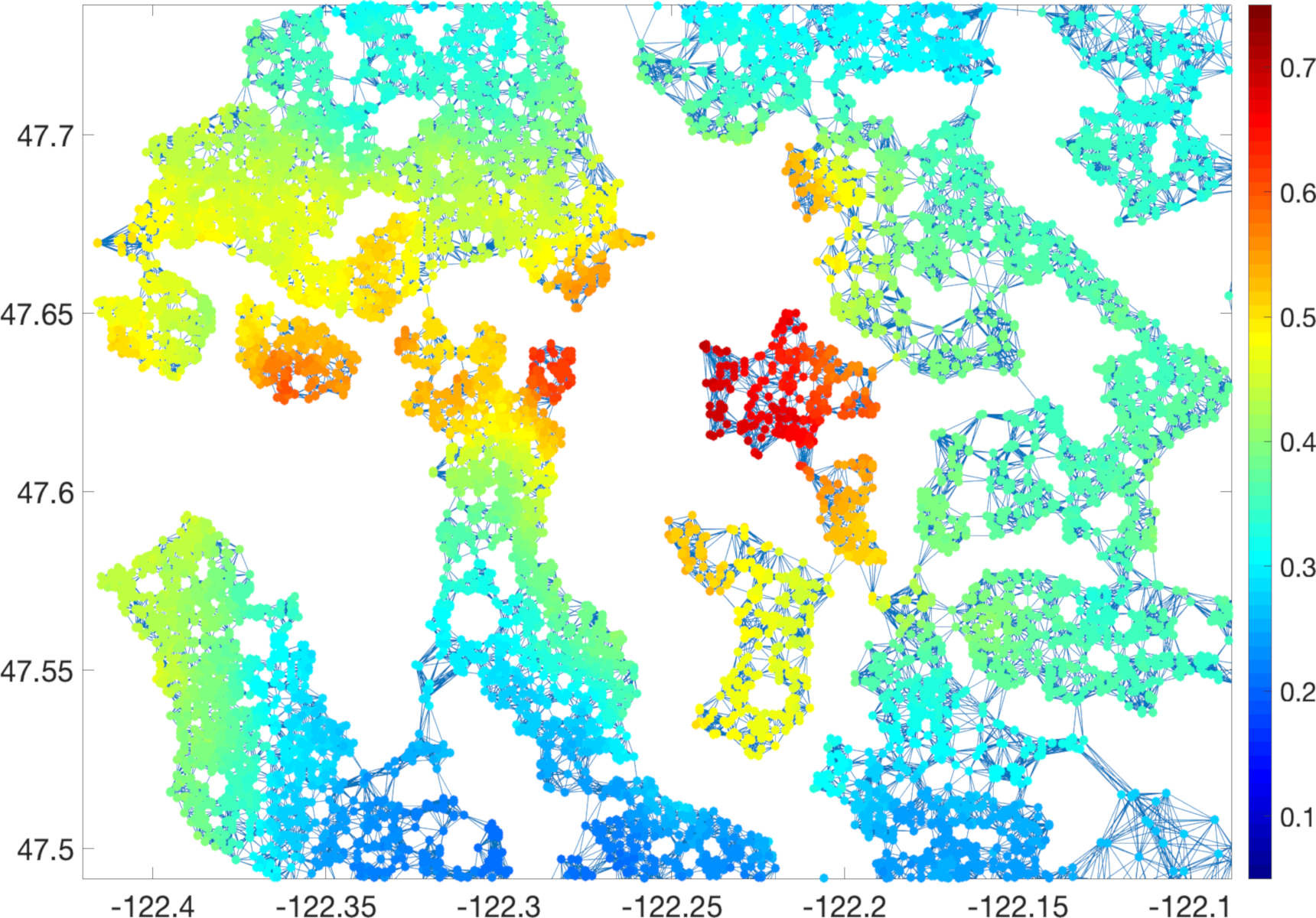}
\caption{
Detail of the image above.
}
\end{subfigure}

\caption{
Denoising of housing prices. The maximum price per square foot 
\label{fig:housing2}
}
\end{figure}

\subsection*{Acknowledgments.}
The collaboration was supported by European Commission H2020 grant number 777826 (NoMADS).
The work of Caroccia has been partially supported by Fundação para a Ciência e Tecnologia through the Carnegie Mellon--Portugal Program under Grant 18316.1.5004440. Part of the work has been completed under the grant "Calcolo delle variazioni, Equazione alle derivate parziali, Teoria geometrica della misura, Trasporto ottimo" co-founded by Scuola Normale Superiore and the University of Florence. 
Chambolle is grateful to CNRS for support.
Slep\v{c}ev is thankful to the NSF for its support (grants DMS-1516677 and DMS-1814991). He is also grateful to the Center for Nonlinear Analysis of CMU.

\appendix 
\section{Proof of Lemma \ref{lem: hope is true}} \label{sec:app}

The proof of Lemma \ref{lem: hope is true} is obtained as a slight modification of the proof contained in \cite{gobbino2001finite} for $f=1$. In particular we apply \cite[Theorem 3.1, Theorem 3.2]{gobbino2001finite} on the family of functions	
	\[
	\varphi_{\e}(r):=\frac{|\xi|}{\e}\zeta\left(\frac{\e r^p}{|\xi|^{q-p+1}}\right).
	\]
Indeed we note the following facts
	\begin{itemize}
	\item[(i)] $\displaystyle \varphi_{\delta |\xi|}\left(\frac{|u(x+\delta |\xi|)-u(x)|}{\delta |\xi|}\right)=\frac{1}{\delta} \zeta\left(\frac{|u(x+\delta|\xi|)-u(x)|^p}{\delta^{p-1}|\xi|^q}\right)$;
	\item[(ii)] $\displaystyle \lim_{\e\rightarrow 0^+} \varphi_{\e}(r)= \zeta'(0) |\xi|^{p-q} r^p$;
	\item[(iii)]$\displaystyle \lim_{\e\rightarrow 0^+} \e \varphi_{\e}(r/\e)= \Theta|\xi|$;
	\end{itemize}
In particular, by combining (i)-(iii) and with a slight variation of the proof of  \cite[Theorem 3.1]{gobbino2001finite} (as in \cite[Section 3.2]{chambolle1999finite} ) we conclude that
	\begin{equation}\label{eqn: key limit}
	\liminf_{\delta\rightarrow 0^+} \frac{1}{\delta} \int_{A} \zeta\left(\frac{|u(x+\delta |\xi|)-u(x)|^p}{\delta^{p-1} |\xi|^q}\right) \geq \zeta'(0) |\xi|^{p-q} \int_{A} |u'(x)|^p \d x +\Theta|\xi| \H^{0} (S_u\cap A).
	\end{equation}
We now proceed to the proof of Lemma \ref{lem: hope is true}.
\begin{proof}[Proof of Lemma \ref{lem: hope is true}]
For every $k\in \N$ consider a partition of $A$ in small intervals $\{I_j^k\}_{j=1}^{|A|/k}$. Then
	\begin{align*}
	\frac{1}{\delta} \int_{A} \zeta\left(\frac{|u(x+\delta |\xi|)-u(x)|^p}{\delta^{p-1} |\xi|^q}\right) f(x) \d x &\geq \sum_{j=1}^{|A|/k} \min_{I_j^k}\{f\} \frac{1}{\delta} \int_{A} \zeta\left(\frac{|u(x+\delta |\xi|)-u(x)|^p}{\delta^{p-1} |\xi|^q}\right)  \d x. 
	\end{align*}
In particular, by applying \eqref{eqn: key limit} on each intervals $I_j^k$ we reach
	\begin{align*}
	\liminf_{\delta\rightarrow 0} \frac{1}{\delta} \int_{A} \zeta\left(\frac{|u(x+\delta |\xi|)-u(x)|^p}{\delta^{p-1} |\xi|^q}\right) f(x) \d x &\geq \zeta'(0) |\xi|^{p-q} \sum_{j=1}^{|A|/k}  \int_{I_j^k} |u'(x)|^p\min_{I_j^k}\{f\} \d x  \\
	&  \ \ \ \  +\sum_{j=1}^{|A|/k}\Theta|\xi|\int_{S_u \cap I_j^k} \min_{I_j^k}\{f\}\d \H^{0}(y).
	\end{align*}
Since $f$ is a Lipschitz function we now notice that, given $\e>0$, we can find $\rho$ such that
	\[
	|x-y|<\rho \ \ \Rightarrow  \ \ |f(x)-f(y)|<\e.
	\]
In particular, for any fixed $\e>0$, we can find a $k\in \N$ big enough such that
	\[
	\min_{I_j^k}\{f\}\geq \max_{I_j^k}\{f\}-\e\geq f(x)-\e \ \ \ \ \text{for all $x\in I_j^k$}
	\]
Thus, 
	\begin{align*}
	\liminf_{\delta\rightarrow 0} \frac{1}{\delta} \int_{A} \zeta\left(\frac{|u(x+\delta |\xi|)-u(x)|^p}{\delta^{p-1} |\xi|^q}\right) f(x) \d x &\geq \zeta'(0) |\xi|^{p-q} \sum_{j=1}^{|A|/k}  \int_{I_j^k} |u'(x)|^p\min_{I_j^k}{f(x)} \d x  \\
	& \ \ +\sum_{j=1}^{|A|/k}\Theta|\xi|\int_{S_u \cap I_j^k} \min_{I_j^k}{f(x)} \d \H^{0}(y)\\
	&\geq \zeta'(0) |\xi|^{p-q}   \int_{A} |u'(x)|^p (f(x) - \e) \d x  \\
	&  \ \ +\Theta|\xi|\int_{S_u \cap A} (f(y)-\e)\d \H^{0}(y).
	\end{align*}
Since the above holds for arbitrarily small positive $\e$, we conclude that \eqref{eqn: liminf one d} holds.
\end{proof}

\section{Proof of Lemma \ref{lem: key size lemma}} \label{sec:app2}
\begin{proof}
Note that for all $x\in \Omega\setminus  D(\e_n,\ell_n) $ it holds that
\begin{itemize}
	\item[(i)]	$\{x +s (T_n(x)-x) : s\in [0,1]  \}\cap S_u =\emptyset$.
	\item[(ii)] $\{ x+ \e_n\xi + s(T_n(x+\e_n \xi) - x+\e\xi) : s\in [0,1] \}\cap S_u=\emptyset$;
\end{itemize}
Indeed, assume by contradiction that $x+t_0(T_n(x)-x) \in S_u $. Then $d(x,S_u)\leq t_0\|T_n(x)-x \|\leq t_0 \ell_n$ which would imply $x\in (S_u)_{ \ell_n}$. Thus a) holds. Analogously assume that for some $t_0\in [0,1]$ we have $x+ \e_n\xi + t_0 (T_n(x+\e_n \xi) - x+\e_n\xi)\in S_u$. Then
	\begin{align*}
	d(x,S_u-\e_n\xi)&\leq \| x - (x+ \e_n\xi + t_0 (T_n(x+\e_n \xi) - x+\e_n\xi)-\e_n\xi)\|\\
	&=  t_0  \| (T_n(x+\e_n \xi) - x+\e_n\xi)\|\leq \ell_n
	\end{align*}
again contradicting $x\in \Omega\setminus D(\e_n,\ell_n)$. In particular 
	\begin{align*}
	|u(T_n(x+\e \xi))  - u(T_n(x))|&\leq |u(x+\e \xi) -u(x)|+|u(T_n(x+\e \xi) ) -u(x+\e \xi)| \\
	& +|u(T_n(x))-u(x)|
	\end{align*}
and, since $u$ is regular outside $S_u$,
	\begin{align*}
	|u(T_n(x+\e \xi) ) -u(x+\e \xi)|& \leq \ell_n \int_{0}^{1} |\nabla u ((x+\e \xi)s + (1-s) T_n(x+\e \xi))| \d s\\
	|u(T_n(x))-u(x)|&\leq\ell_n \int_{0}^{1} |\nabla u (xs + (1-s) T_n(x)))| \d s,
	\end{align*}
which is proving \eqref{eqn: comparison}. In order to prove \eqref{eqn: size of the bad points} we just notice that 
	\[
	|(Su-\e_n\xi)_{\ell_n}|=|(Su)_{\ell_n}|
	\]
and that, since $S_u$ is a ployhedral set, for big $n$
	\[
|(Su)_{\ell_n}|=2\ell_n\H^{d-1}(S_u)+o(\ell_n).
	\]
This, combined with \eqref{eqn: decay on epsilon}, implies \eqref{eqn: size of the bad points}.
\end{proof}


\bibliography{references}
\bibliographystyle{alphaabbr}

\end{document}